\DeclareMathAlphabet{\pazocal}{OMS}{zplm}{m}{n}
\newcommand{\tropC}{\scalebox{0.8}[1.3]{$\sqsubset$}}
\newcommand{\ttropC}{\widetilde{\tropC}}
\newcommand{\tropT}{\scalebox{0.9}[1.2]{$\top$}}
\newcommand{\tropD}{\scalebox{0.9}[1.3]{$\triangle$}}
\newcommand{\on}{\operatorname}
\newcommand{\PP}{\mathbb P}
\renewcommand{\k}{\mathbf k}
\newcommand{\m}{\mathfrak m}
\newcommand{\tR}{\widetilde{R}}
\newcommand{\tm}{\widetilde{\mathfrak m}}
\newcommand{\OO}{\mathcal O}
\renewcommand{\to}{\rightarrow}
\newcommand{\Aaff}{\mathbb A}
\newcommand{\oM}{\overline{\pazocal M}}
\newcommand{\tC}{\widetilde{\mathcal C}}
\newcommand{\cC}{\mathcal C}
\newcommand{\oC}{\overline{\mathcal C}}
\newcommand{\Gm}{\mathbb{G}_{\rm{m}}}
\newcommand{\Ga}{\mathbb{G}_{\rm{a}}}
\newcommand{\dvr}{\Delta}
\newcommand{\bq}{\begin{equation}}
\newcommand{\eq}{\end{equation}}
\newcommand{\ba}{\begin{aligned}}
\newcommand{\ea}{\end{aligned}}
\newcommand{\be}{\begin{enumerate}}
\newcommand{\ee}{\end{enumerate}}
\newcommand{\bsm}{\left(\begin{smallmatrix}}
\newcommand{\esm}{\end{smallmatrix}\right)}                   
\newcommand{\bpm}{\begin{pmatrix}}
\newcommand{\epm}{\end{pmatrix}}
\newcommand{\barr}{\begin{displaymath}\begin{array}{cccc}}
\newcommand{\earr}{\end{array}\end{displaymath}}
\newcommand{\barrl}{\begin{displaymath}\begin{array}{lcl}}
\newcommand{\earrl}{\end{array}\end{displaymath}}
\newcommand{\barl}{\begin{displaymath}\begin{array}{l}}
\newcommand{\earl}{\end{array}\end{displaymath}}
\newcommand{\bxym}{ \begin{displaymath}\xymatrix }
\newcommand{\exym}{\end{displaymath}}
\newcommand{\bcd}{\begin{center}\begin{tikzcd}}
\newcommand{\ecd}{\end{tikzcd}\end{center}}
\newcommand{\Pic}{\operatorname{Pic}}
\newcommand{\dist}{\operatorname{dist}}
\newcommand{\hhom}{\mathcal{H}\!om}
\newcommand{\Aut}{\operatorname{Aut}}
\newcommand{\Exc}{\operatorname{Exc}}
\newcommand{\lev}{\operatorname{lev}}
\newcommand{\id}{{\rm id}}
\theoremstyle{plain}
\newtheorem{thm}{Theorem}[section]
\newtheorem{lem}[thm]{Lemma}
\newtheorem{prop}[thm]{Proposition}
\newtheorem{cor}[thm]{Corollary}
\newtheorem*{theorem*}{Theorem}
\newtheorem*{fact}{Fact}
\theoremstyle{definition}
\newtheorem{definition}[thm]{Definition}
\theoremstyle{remark}
\newtheorem{rem}[thm]{Remark}
\newtheorem{exa}[thm]{Example}
\newtheorem*{caveat}{Caveat}
\newtheorem{claim}{Claim}
\tikzset{
  arrow/.pic={\path[tips,every arrow/.try,->,>=#1] (0,0) -- +(0,4pt);},
  pics/arrow/.default={triangle 90}
}
\tikzset{->-/.style={decoration={
  markings,
  mark=at position .6 with {\arrow{latex}}},postaction={decorate}}
  }
\tikzset{
  c/.style={every coordinate/.try}
}
\begin{document}

\title[Modular compactifications of $\pazocal{M}_{2,n}$]{Modular compactifications of $\pazocal{M}_{2,n}$ \\ with Gorenstein curves}
\author{Luca Battistella}
\email{lbattistella@mathi.uni-heidelberg.de}
\address{Ruprecht-Karls-Universität Heidelberg\\Im Neuenheimer Feld 205\\69120 Heidelberg\\Germany}
%
\classification{14H10 (primary), 14H20 (secondary).}
\keywords{moduli of curves, Gorenstein singularities, genus two, crimping spaces}
\thanks{This project grew out of discussions with Francesca Carocci, whom I thank heartily. I am grateful to Daniele Agostini, Fabio Bernasconi, Sebastian Bozlee, Maria Beatrice Pozzetti, Dhruv Ranganathan, Luca Tasin, and Jonathan Wise for helpful conversations. I thank the anonymous referee, whose detailed comments were of great help in improving the exposition of this material. I thank the Max Planck Institute for Mathematics in Bonn for providing financial support and a stimulating research environment. During the revision of this paper, I was supported by the Deutsche Forschungsgemeinschaft (DFG, German Research Foundation) under Germany’s Excellence Strategy EXC-2181/1 - 390900948 (the Heidelberg STRUCTURES Cluster of Excellence).}

\begin{abstract}
\setstretch{1.1}{
We study the geometry of Gorenstein curve singularities of genus two, and of their stable limits. These singularities come in two families, corresponding to either Weierstrass or conjugate points on a semistable tail. For every $1\leq m <n$, a stability condition - using one of the markings as a reference point, and thus not $\mathfrak S_n$-symmetric - defines proper Deligne-Mumford stacks $\oM_{2,n}^{(m)}$ with a dense open substack representing smooth curves.}
\end{abstract}

\maketitle

\section{Introduction}
We construct alternative compactifications of the moduli stack of smooth $n$-pointed curves of genus two. The boundary of the Deligne-Mumford compactification, consisting of stable nodal curves, is gradually replaced by ever more singular curves, complying with more restrictive combinatorial requirements on the dual graph. For $1\leq m <n$, we introduce a notion of $m$-stability, that allows Gorenstein singularities of genus one and two while at the same time demanding that higher genus subcurves contain a minimum number of special points. Our main result concerning the stack of $m$-stable curves is the following:
\begin{theorem*}
 $\oM^{(m)}_{2,n}$ is a \emph{proper} irreducible Deligne-Mumford stack over $\operatorname{Spec}(\mathbb Z[\frac{1}{6}])$.
\end{theorem*}
This paper fits into the framework of alternative compactifications and birational geometry of the moduli space of curves, extending work of D.I. Smyth in genus one, but we expect it to find applications to enumerative geometry as well.

We classify Gorenstein singularities of genus two with any number of branches, and their (semi)stable models, highlighting the relation with Brill-Noether theory, and adopting the language of piecewise-linear functions on tropical curves. The key insight in defining the new stability conditions is that we can avoid non-Gorenstein singularities by modifying the curve at the conjugate point of the special branch; we use one of the markings to select the latter, and, more generally, to identify the $m$-stable limit in some very symmetric situations in which multiple choices are possible, a priori - as a result, our stability conditions are not $\mathfrak S_n$-symmetric.

We interpret crimping spaces (moduli of curves with a prescribed singularity type) as parameter spaces for the differential geometric data needed in order to construct a higher genus singularity from an ordinary $m$-fold point, and establish a connection with the existence of infinitesimal automorphisms, a phenomenon which had not fully emerged in lower genus.

Though a conspicuous amount of related research has been carried out on the birational geometry of $\oM_{2,n}$ for low values of $n$ \cite{Hassettg2,HL-tricanonical,Rulla,HL-birational_contraction, FedorchukGrimes,PolishchukJohnson}, this appears to be the first proposal of a sequence of modular compactifications for every $n$.

\subsection{From the Deligne-Mumford space to the Hassett-Keel program} One of the most influential results of modern algebraic geometry is the construction of a modular compactification of the stack of smooth pointed curves $\pazocal M_{g,n}$, due to P. Deligne, D. Mumford, and F. Knudsen, with the introduction of \emph{stable} pointed curves.

\begin{definition}\cite{DM}
 A connected, reduced, complete curve $C$ over an algebraically closed field $\k$, with distinct markings $(p_1,\ldots,p_n)$ lying in the smooth locus of $C$, is \emph{stable} if:
 \begin{enumerate}[leftmargin=.7cm]
  \item $C$ admits only nodes (ordinary double points) as singularities;
  \item every rational component of $C$ has at least three special points (markings or nodes), and every elliptic component has at least one.
 \end{enumerate}
\end{definition}

\begin{thm} \cite{DM,Knudsen}
 Assume $2g-2+n>0$. The moduli stack of stable pointed curves $\oM_{g,n}$ is a smooth and proper connected Deligne-Mumford stack over $\operatorname{Spec}(\mathbb Z)$, with projective coarse moduli space $\overline{\mathbf M}_{g,n}$, and normal crossing boundary representing nodal curves.
\end{thm}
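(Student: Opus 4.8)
The plan is to realize $\oM_{g,n}$ as a global quotient stack and then establish each assertion in turn via Hilbert schemes, deformation theory, and the valuative criterion. First I would fix an integer $\nu\geq 3$ and observe that for a stable curve the log canonical sheaf $\omega_C(\sum_i p_i)$ is ample, so $\omega_C(\sum_i p_i)^{\otimes\nu}$ is very ample with a fixed Hilbert polynomial $P(t)=(2g-2+n)\nu t+1-g$, embedding every such curve into a fixed $\PP^N$ with $N=(2g-2+n)\nu-g$. Let $H\subset\operatorname{Hilb}_P(\PP^N)$ be the locally closed subscheme parametrizing $\nu$-canonically embedded stable curves together with their ordered markings. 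One checks $\oM_{g,n}\simeq[H/\mathrm{PGL}_{N+1}]$, which exhibits it as an algebraic stack of finite type over $\operatorname{Spec}(\mathbb Z)$. To upgrade this to a Deligne--Mumford stack I would verify that the diagonal is unramified, equivalently that $\Aut(C,p_1,\dots,p_n)$ is finite and unramified for every stable curve; this is exactly the force of stability condition (2), which excludes the positive-dimensional automorphism groups of $\PP^1$ with at most two special points and of a smooth genus-one component with none.

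Smoothness over $\operatorname{Spec}(\mathbb Z)$ and the normal-crossing structure of the boundary both come out of the same local deformation-theoretic analysis. The tangent space at $[C]$ is $\operatorname{Ext}^1_{\OO_C}(\Omega_C,\OO_C(-\sum_i p_i))$, of dimension $3g-3+n$, and the obstructions lie in $\operatorname{Ext}^2$, which vanishes because $C$ is a curve; hence the deformation functor is unobstructed and the stack is smooth. The local-to-global spectral sequence splits this $\operatorname{Ext}^1$ into a global part $H^1\!\left(C,\hhom(\Omega_C,\OO_C(-\sum_i p_i))\right)$, governing deformations that preserve the singularity type, and a local part $\bigoplus_{\text{nodes}}\mathcal{E}xt^1$ with one line per node. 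Since a node is the hypersurface singularity $xy=0$, its versal deformation $xy=t$ is one-dimensional and smooth, and the smoothing parameters $t_i$ at distinct nodes are independent coordinates; the boundary is cut out by $\prod_i t_i=0$, which is precisely a normal crossing divisor. Étale descent of this picture along the atlas $H$ promotes it to the global statement.

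The crux of the argument is \emph{properness}, which I would obtain from the valuative criterion in the form of the stable reduction theorem: given a family of smooth (or already stable) curves over the punctured spectrum of a discrete valuation ring, there exists, after a finite base change, a \emph{unique} extension to a stable curve over the whole spectrum. Uniqueness is the easy direction, following from the fact that the stable model is the relative log canonical model and hence minimal among nodal extensions. Existence is the genuinely hard part and where I expect the main obstacle: starting from an arbitrary flat extension one performs semistable reduction --- resolving the total space and applying semistable reduction for curves and surfaces, or, following Deligne--Mumford, passing through the N\'eron model of the Jacobian to produce semiabelian reduction --- to reach a nodal family, and then contracting the destabilizing rational components to land on the stable model. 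Carrying this through \emph{uniformly over $\operatorname{Spec}(\mathbb Z)$}, including in residue characteristics $2$ and $3$, is exactly what secures the statement over the integers.

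Finally I would handle connectedness and projectivity of the coarse space. For connectedness, I would first establish irreducibility of $\pazocal M_{g,n}$ over $\mathbb C$ (classically, from the connectedness of Teichm\"uller space, or via a monodromy argument on a Hurwitz scheme), and then propagate it to all characteristics: since the total stack is normal with geometrically connected generic fibre and the structure morphism to $\operatorname{Spec}(\mathbb Z)$ is proper, the Stein factorization is trivial and every geometric fibre is connected --- this is Deligne--Mumford's irreducibility theorem. For projectivity of $\overline{\mathbf M}_{g,n}$ I would invoke Mumford's geometric invariant theory: one shows that the $\nu$-canonically embedded stable curves are precisely the GIT-(semi)stable points of $\operatorname{Hilb}_P(\PP^N)$ for the $\mathrm{PGL}_{N+1}$-action with a suitable linearization, so that the GIT quotient $H/\!\!/\mathrm{PGL}_{N+1}$ is a projective scheme coarsely representing $\oM_{g,n}$.
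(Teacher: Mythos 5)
This statement is not proved in the paper at all: it is the classical Deligne--Mumford--Knudsen theorem, recalled as background with a citation to \cite{DM,Knudsen}, so there is no internal proof to compare against. Your outline is a faithful and essentially correct summary of the standard argument from those references: the $\nu$-log-canonical embedding into a fixed Hilbert scheme and the presentation $[H/\mathrm{PGL}_{N+1}]$, finiteness and unramifiedness of automorphism groups from the stability condition, unobstructedness of deformations plus the local versal deformation $xy=t$ of the node for smoothness and the normal crossing boundary, stable reduction for properness, and the propagation of irreducibility from characteristic zero via Stein factorization. Two small caveats are worth flagging. First, for the projectivity of $\overline{\mathbf M}_{g,n}$ over $\operatorname{Spec}(\mathbb Z)$, Knudsen's actual argument is not GIT but an induction on $n$ using the ampleness of a determinant line bundle built from the universal curve; the GIT route (Mumford--Gieseker, and for pointed curves one must weight the markings in the linearization) is a genuinely different proof and requires extra care in small residue characteristics. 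Second, the vanishing of $\operatorname{Ext}^2$ should be justified via the local-to-global spectral sequence together with the fact that nodes are local complete intersections (so $\mathcal{E}xt^2$ vanishes and the remaining terms die for dimension reasons), rather than merely ``because $C$ is a curve.'' Neither point is a gap in substance, only in precision of attribution and justification.
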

On one hand, the Deligne-Mumford compactification has nearly every desirable property one could hope for; on the other, it is certainly not the unique modular compactification of $\pazocal M_{g,n}$. Classifying all of them is a challenging task, which was set out and partially performed in the inspiring work of Smyth \cite{SMY-towards} (see also \cite{Bozlee-thesis} for more recent efforts, bringing logarithmic geometry into the picture). The motivation comes mostly from birational geometry.

Even though the existence of $\overline{\mathbf M}_{g,n}$ can be deduced from nowadays standard theorems on stacks \cite{KM}, this moduli space was first constructed as a quotient, prompting the development of a powerful technique known as Geometric Invariant Theory  \cite{Gieseker,GIT,BalSwi}. Studying alternative compactifications of $\pazocal M_{g,n}$ sheds some light on the Mori chamber decomposition of $\overline{\mathbf M}_{g,n}$, and it is not by chance that the first steps in this direction were moved from a GIT perspective - by changing the invariant theory problem or the stability condition under consideration, and analysing the modular properties of the resulting quotients \cite{Schubert,Hassettg2,HassettHyeon}. This program, initiated by B. Hassett and S. Keel, aims to describe all the quotients arising in this way, and to determine whether every step of a log minimal model program for $\overline{\mathbf M}_{g,n}$ enjoys a modular interpretation in terms of curves with worse than nodal singularities \cite{CTV1,CTV2}. Since the early stages of this program, it has developed into a fascinating playground for implementing ideas that originated from (v)GIT into a general structure theory of Artin stacks \cite{AlperKresch,AFS1,AFS2,AFS3}. See for instance \cite{Morrison, FS} for more detailed and comprehensive accounts.

Only few steps of the Hassett-Keel program have been carried out in full generality. Yet, the program has been completed to a larger extent in low genus: with the introduction of Boggi-stable \cite{Boggi} and weighted pointed curves \cite{Hassettweighted} in genus zero, and with Smyth's pioneering work in genus one \cite{SMY1,SMY2,SMY3}, extending earlier work of D. Schubert. In a nutshell, an alternative compactification is defined by allowing a reasonably larger class of curve singularities (\emph{local condition}) while identifying their (semi)stable models, and disallowing the latter by imposing a stronger stability condition (\emph{global condition}, typically combinatorial); the valuative criterion ensures that the resulting moduli problem remains separated and universally closed.

A useful notion in this respect is that of the \emph{genus} of an isolated curve singularity: let $(C,q)$ be (the germ of) a reduced curve over an algebraically closed field $\k$ at its unique singular point $q$, with normalisation $\nu\colon\widetilde{C}\to C$ and $\mathcal F=\nu_*\OO_{\widetilde C}/\OO_C$, a skyscraper sheaf supported at $q$.
\begin{definition}\label{def:genus}\cite{SMY1}
If $C$ has $m$ branches (irreducible components of the normalisation) at $q$, and $\delta$ is the $\k$-dimension of $\mathcal F$, the genus of $(C,q)$ is defined as:
\[g=\delta-m+1.\] 
\end{definition}
The genus can be thought of as the number of conditions that a function must satisfy in order to descend from the seminormalisation (the initial object in the category of universal homeomorphisms $C^\prime\to C$, see \cite[\href{https://stacks.math.columbia.edu/tag/0EUS}{Tag 0EUS}]{stacks-project}, or a curve with the same topological space as $C$ and an ordinary $m$-fold point at $q$) to $C$. The node, for example, has genus zero (it coincides with its own seminormalisation). The genus of a singular point represents its non-topological contribution to the arithmetic genus of the curve containing it.

Smyth found that, for every fixed number $m$ of branches, there is a unique germ of Gorenstein singularity of genus one up to isomorphism, namely:
\begin{description}
 \item[$m=1$] the cusp, $V(y^2-x^3)\subseteq\Aaff^2_{x,y}$;
 \item[$m=2$] the tacnode, $V(y^2-yx^2)\subseteq\Aaff^2_{x,y}$;
 \item[$m\geq 3$] the union of $m$ general lines through the origin of $\Aaff^{m-1}$.
\end{description}
Singularities of this kind, with up to $m$ branches, together with nodes, form a deformation-open class of singularities. Moreover, the elliptic $m$-fold point can be obtained by contracting a smooth elliptic curve with $m$ rational tails in a one-parameter smoothing, and, roughly speaking, all stable models have a shape similar to this one.
\begin{definition}\cite{SMY1}
 A connected, reduced, complete curve $C$ of arithmetic genus one with smooth distinct markings $(p_1,\ldots,p_n)$ is \emph{$m$-stable}, $1\leq m<n$, if:
 \begin{enumerate}[leftmargin=0.7cm]
  \item it admits only nodes and elliptic $l$-fold points, $l\leq m$, as singularities;
  \item for every connected subcurve $E\subseteq C$ of arithmetic genus one, its \emph{level}:
  
  \noindent$\lvert E\cap\overline{C\setminus E}\rvert+\lvert\{i\colon p_i\in E\}\rvert$ is strictly larger than $m$;
  \item $H^0(C,\Omega_C^\vee(-\sum_i p_i))=0$ (finiteness of automorphism groups).
 \end{enumerate}
\end{definition}
The latter can be taken for a decency condition on the moduli stack. The first two, instead, are essential in guaranteeing the uniqueness of $m$-stable limits, as per the discussion above. Smyth's main result is the following.
\begin{thm}\cite{SMY1,SMY2}
 The moduli stack of $m$-stable curves $\oM_{1,n}(m)$ is a proper irreducible Deligne-Mumford stack over $\operatorname{Spec}\mathbb Z[1/6]$. It is \emph{not} smooth for $m\geq 6$. The coarse moduli spaces $\overline{\mathbf{M}}_{1,n}(m)$ arise as birational models of $\overline{\mathbf{M}}_{1,n}$ for the big line bundles $D(s)=s\lambda+\psi-\Delta$, where $\lambda$ is the Hodge class, $\psi$ is the sum of the $\psi$-classes, $\Delta$ is a boundary class, and there is an explicit relation between $s$ and $m$.
\end{thm}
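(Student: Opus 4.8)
The plan is to treat the four assertions --- Deligne--Mumford algebraicity, properness, irreducibility, and the (non-)smoothness together with the birational-model description --- in turn, with the local classification of genus-one Gorenstein singularities as the main tool. First I would establish that $\oM_{1,n}(m)$ is an algebraic stack with finite automorphisms. Since nodes together with elliptic $l$-fold points for $l\leq m$ form a deformation-open class of singularities, the $m$-stable curves (with markings in the smooth locus) form an open substack of the algebraic stack of all such genus-one curves, conditions (1) and (2) being open in families. Condition (3), the vanishing $H^0(C,\Omega_C^\vee(-\sum_i p_i))=0$, is again open and forces the automorphism group scheme of each fibre to be finite and unramified, so the stack is Deligne--Mumford. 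Unramifiedness can fail in characteristics $2$ and $3$, where elliptic curves acquire extra infinitesimal automorphisms; inverting $6$ removes this pathology, which accounts for the base $\operatorname{Spec}\mathbb{Z}[1/6]$.

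Next, properness via the valuative criterion over a DVR $R$ with fraction field $K$. For existence of limits, start from a family of smooth curves over $\operatorname{Spec} K$, extend it to a stable family over $\operatorname{Spec} R$ by the Deligne--Mumford theorem, and then modify the special fibre: one locates the minimal elliptic subcurve $E$, and if its level is at most $m$ one contracts $E$ to an elliptic $l$-fold point, the key geometric input being that a level-$l$ elliptic configuration in a one-parameter smoothing contracts exactly to this singularity. The level condition (2) guarantees that the resulting central fibre is $m$-stable. Uniqueness is the crux: any two $m$-stable $R$-models of the same generic family must be isomorphic. Here one compares their common semistable model and appeals to the uniqueness of the decontraction of an elliptic $l$-fold point --- precisely the point at which the crimping data (the differential-geometric gluing of the singularity) must be shown to carry no residual moduli.

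Irreducibility is then immediate, since $\pazocal M_{1,n}$ is irreducible and, being an open substack, dense, while every $m$-stable curve arises by contraction from a stable --- hence smoothable --- curve and so lies in the closure of the smooth locus. For non-smoothness when $m\geq 6$, I would compute the tangent--obstruction theory at a curve carrying an elliptic $m$-fold point, realised as the cone over $m$ general points of $\PP^{m-2}$: a direct computation of $T^1$ and $T^2$ of the singularity should exhibit a nonzero obstruction class once $m\geq 6$, so the versal deformation space, and hence the stack, is singular there, whereas for $m\leq 5$ the relevant obstruction vanishes. Finally, for the birational-model statement one produces a birational contraction $\overline{\mathbf M}_{1,n}\dashrightarrow\overline{\mathbf M}_{1,n}(m)$ and identifies the descended polarisation with $D(s)=s\lambda+\psi-\Delta$ by intersecting $D(s)$ with the curve classes being contracted; the threshold value of $s$ is pinned down by requiring $D(s)$ to have degree zero on exactly those families contracted at level $m$, which yields the explicit relation between $s$ and $m$.

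The hardest step is the uniqueness of $m$-stable limits: separatedness hinges on a complete understanding of the automorphism and crimping data of the elliptic singularities, and it is here that the genuinely new phenomena --- absent for nodal curves --- appear. The obstruction computation underlying non-smoothness for $m\geq 6$ is the second delicate point, as it rests on an explicit and somewhat surprising cohomological calculation rather than on formal deformation-openness.
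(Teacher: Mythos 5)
This statement is not proved in the paper at all: it is quoted verbatim as background, with the proof delegated to the cited works of Smyth \cite{SMY1,SMY2}. There is therefore no in-paper argument to compare your proposal against; what I can say is that your outline is a faithful reconstruction of the strategy Smyth actually follows. The algebraicity/DM step via deformation-openness of the class $\{$nodes, elliptic $l$-fold points, $l\leq m\}$ and openness of the level and automorphism conditions, the valuative criterion with existence by contracting the minimal genus-one subcurve of level $\leq m$ in a regular semistable model and uniqueness by comparison through a common semistable model, irreducibility via smoothability, the non-smoothness for $m\geq 6$ via obstructedness of the deformation space of the elliptic $m$-fold point (unobstructed exactly for $m\leq 5$, where the singularity is a hypersurface, a complete intersection, or Pfaffian), and the identification of $\overline{\mathbf M}_{1,n}(m)$ with the model associated to $D(s)$ in \cite{SMY2} -- all of this matches. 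Two caveats. First, this is a roadmap, not a proof: every genuinely hard step (the contraction lemma showing that a balanced genus-one subcurve contracts to an elliptic $l$-fold point, the separatedness argument, the $T^2$ computation, the positivity of the descended polarisation) is named rather than carried out, and these occupy the bulk of Smyth's two papers. Second, your explanation of the base $\operatorname{Spec}\mathbb Z[1/6]$ is slightly off: smooth pointed elliptic curves have unramified automorphism group schemes in every characteristic (since $H^0(E,\OO_E(-p))=0$); the ramification in characteristics $2$ and $3$ comes from the \emph{singular} points -- the explicit description of $\Omega_C^\vee$ at a cusp or elliptic $l$-fold point involves the integers $2$ and $3$, exactly as the analogous computations in Lemma \ref{lem:aut} of the present paper force inverting $2,3,5$ in genus two.
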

Some further information on the geometry and singularities of these spaces (with the restriction $m=n-1$) has been discovered by Y. Lekili and A. Polishchuk in their study of \emph{strongly non-special} curves \cite{Lekili-Polishchuk}.

\subsection{Experimenting on a genus two tale}  In this subsection, we walk through the motivations and methods at the heart of our construction, exemplifying them in the simplest possible case, that of $\oM_{2,2}^{(1)}$. The facts we mention are either proved or explained in greater detail and generality in the paper. Here is a classical
\begin{fact}
 There are two unibranch singularities of genus two, the \emph{ramphoid cusp} or \emph{$A_4$-singularity} $V(y^2-x^5)\subseteq \Aaff^2_{x,y}$, and the ordinary genus two cusp $\operatorname{Spec}(\k[t^3,t^4,t^5])$. The former is Gorenstein, with stable model a Weierstrass tail (a genus two curve attached to a rational one at a Weierstrass point), while the latter is not Gorenstein, with stable model a non-Weierstrass tail of genus two.
\end{fact}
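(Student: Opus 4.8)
The plan is to reduce the classification to the combinatorics of numerical semigroups and then read off the geometric consequences. Since the singularity is unibranch its normalisation is $\operatorname{Spec}\k[[t]]$, and Definition~\ref{def:genus} with $m=1$ forces $\delta=g=2$; thus I regard $\OO=\OO_{C,q}$ as a subring of $\widetilde\OO=\k[[t]]$ of colength two, and set $S=\{v(f):0\neq f\in\OO\}\subseteq\N$ for the value semigroup, where $v$ is the $t$-adic valuation. The number of gaps $\#(\N\setminus S)$ equals $\delta=2$. First I would enumerate the semigroups: $1$ is always a gap, so let the second gap be $g\geq 2$; if $g\geq 4$ then $2,3\in S$, whose sums already cover every integer $\geq 2$, contradicting that $g$ is a gap. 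Hence $g\in\{2,3\}$, and $S$ is either $\langle 3,4,5\rangle$ (gaps $\{1,2\}$) or $\langle 2,5\rangle$ (gaps $\{1,3\}$).

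Next I would upgrade this to a classification up to isomorphism by showing that each semigroup rigidifies the singularity. For $\langle 3,4,5\rangle$ the conductor is $\mathfrak c=t^3\k[[t]]$, and since no element of $\OO$ has valuation $1$ or $2$ one gets $\OO=\k\oplus t^3\k[[t]]=\k[[t^3,t^4,t^5]]$ with no freedom, recovering the ordinary genus two cusp $\operatorname{Spec}(\k[t^3,t^4,t^5])$. For $\langle 2,5\rangle$ I would choose $x\in\OO$ with $v(x)=2$ and extract a square root to replace the uniformiser by one for which $x=t^2$ (this needs $2$ invertible, consistent with the base $\mathbb Z[\tfrac{1}{30}]$); subtracting even-order terms and rescaling then normalises a valuation-$5$ generator to $y=t^5$, giving $\OO=\k[[t^2,t^5]]=V(y^2-x^5)$, the ramphoid cusp $A_4$. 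This rigidity — the triviality of the crimping space in both cases — is the step requiring the most care, since one must verify that the residual reparametrisations of $\k[[t]]$ genuinely annihilate all higher-order terms.

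The Gorenstein dichotomy is then immediate. Being a plane curve, $V(y^2-x^5)$ is a hypersurface and hence a complete intersection, so $A_4$ is Gorenstein; equivalently its semigroup $\langle 2,5\rangle$ is symmetric, the conductor exponent $c=4$ satisfying $c=2\delta$. By contrast $\langle 3,4,5\rangle$ has conductor exponent $c=3\neq 2\delta$, so it is not symmetric and the genus two cusp is not Gorenstein; concretely its embedding dimension is three, obstructing any complete-intersection presentation.

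For the stable models I would pass through the (semi)stable reduction of a one-parameter smoothing and exploit the link with Brill–Noether theory advertised in the introduction. Each singularity is the contraction of a smooth genus two tail $\Sigma$ meeting the rest of the curve at a single point $p$, and the value semigroup of the resulting singularity coincides with the Weierstrass semigroup of $(\Sigma,p)$. On a genus two curve the general point has Weierstrass gaps $\{1,2\}$, i.e.\ semigroup $\langle 3,4,5\rangle$, whereas a Weierstrass (hyperelliptic ramification) point has gaps $\{1,3\}$, i.e.\ semigroup $\langle 2,5\rangle$, since there $2p\sim K$ makes the semigroup symmetric. Matching semigroups then identifies the stable model of $A_4$ with a Weierstrass tail and that of the genus two cusp with a non-Weierstrass genus two tail. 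I expect this final correspondence — reconstructing the tail and its attaching point from the abstract singularity — to be the main conceptual obstacle, as it is precisely where the symmetric/Gorenstein condition translates into the Weierstrass condition.
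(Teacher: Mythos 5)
Your classification of the two subrings and the Gorenstein dichotomy are correct, and in fact essentially reproduce the paper's Lemma \ref{lem:unibranch}: in the unibranch case the graded pieces $(\tR/R)_i$ used there record exactly the gaps of the value semigroup, so your enumeration of semigroups with two gaps ($\{1,2\}$ versus $\{1,3\}$) is the same case division as the paper's $(1,1,0)$ versus $(1,0,1)$, and your rigidity argument (normalising $x=t^2$ by a square root of a unit) matches the coordinate change in the second bullet of that proof. Your Gorenstein criterion via symmetry of the semigroup, or via the hypersurface presentation of $V(y^2-x^5)$, is a valid alternative to the conductor-colength criterion $\dim_\k(R/I)=\delta$ that the paper invokes; both are standard and either suffices.

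The gap is in the last step. The assertion that \emph{the value semigroup of the contracted singularity coincides with the Weierstrass semigroup of the attaching point} $(\Sigma,p)$ is precisely the content of the statement you are trying to prove, and you do not supply an argument for it; as you yourself note, reconstructing the tail from the singularity is the main obstacle. The difficulty is that $\OO_{\overline{\mathcal C}_0,q}$ is \emph{not} computed fibrewise from $\mathcal C_0$ (pushforward does not commute with restriction to the central fibre; the genus of the singularity lives in $R^1\phi_*\OO_{\mathcal C}$), so one cannot read the subring of $\k[\![t]\!]$ off the central fibre alone. The paper closes this gap by an entirely different mechanism: from $\phi^*\omega_{\overline{\mathcal C}/\dvr}=\omega_{\mathcal C/\dvr}(D)$ and the computation that $\omega$ of a type $I$ singular curve has multidegree $(0,\ldots,0,2)$ concentrated on the singular branch (Lemma \ref{lem:dualising_lb}, Corollary \ref{cor:deg_dualising}), it identifies the attaching point as a ramification point of the relative hyperelliptic cover $\overline{\mathcal C}\to\PP(\bar\pi_*\omega_{\overline{\mathcal C}/\dvr})$, whence Weierstrass (Proposition \ref{prop:tailI}); the converse construction (Proposition \ref{prop:contractionI}) uses separatedness of $\Pic^0$ to compare $\omega_{\mathcal C/\dvr}(D+\sum p_i)$ with $\OO(2p_1+\sum p_i)$ and prove semiampleness. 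Note also that your route, even if completed, only treats a \emph{smooth} tail attached at one point, whereas the stable limit may be nodal, in which case ``Weierstrass'' must be interpreted via admissible covers as in Remark \ref{rmk:Wandconj} and the Weierstrass semigroup of the attaching point is no longer the right invariant. To repair the proposal you would need either to prove the semigroup identity directly (say by analysing $\phi_*\OO_{\mathcal C}/\pi^k$ and the filtration by vanishing order along $\Sigma$, which amounts to redoing the paper's balancing computation) or to adopt the dualising-sheaf argument.
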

See Lemma \ref{lem:unibranch} and Proposition \ref{prop:tailI} below. Recall that every smooth curve of genus two is hyperelliptic, i.e. it can be realised as a two-fold cover of $\PP^1$, 
in a unique way up to projectivities. 
The cover automorphism is called the hyperelliptic involution $\sigma$; ramification points (fixed points of $\sigma$) are called Weierstrass, and in general $\{p,\sigma(p)\}$ are called conjugate points. See Section \ref{rmk:Wandconj}.

Let us try Smyth's approach out on genus two curves, starting with $\oM_{2,2}^{(1)}$. If we are going to require the level of a genus two subcurve to be at least two, it seems that we will need non-Gorenstein singularities in order to keep our moduli space proper. This might lead us into trouble; for example, the (log) dualising line bundle is classically exploited to construct canonical polarisations on stable curves, which in turn are essential in the proof that $\oM_{g,n}$ is an algebraic stack (or in the GIT construction of $\overline{\mathbf M}_{g,n}$). Yet, there is a way around the singularity $\k[\![t^3,t^4,t^5]\!]$.
\begin{fact}
 The $A_5$-singularity $V(y^2-yx^3)\subseteq\Aaff^2_{x,y}$ is a Gorenstein singularity of genus two with two branches. Its stable model is a genus two bridge, with conjugate attaching points. A marked union of two copies of $\PP^1$ along an $A_5$-singularity has no non-trivial automorphisms as soon as one of the two branches contains at least two markings.
\end{fact}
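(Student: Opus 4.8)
\emph{The local invariants.} First I would dispose of the Gorenstein claim with no computation: $V(y^2-yx^3)$ is a hypersurface in $\Aaff^2$, hence a complete intersection, hence Gorenstein. The branch count is read off the factorisation $y^2-yx^3=y(y-x^3)$: two smooth branches, so $m=2$. For the genus I would apply Definition \ref{def:genus} and compute $\delta$ by hand. Normalising gives $\OO_{\widetilde C}=\k[[t]]\times\k[[s]]$ with $x\mapsto(t,s)$ and $y\mapsto(0,s^3)$; since $y^2=yx^3$ forces $y^k=yx^{3(k-1)}$, the local ring is the free module $\OO_C=\k[[x]]\oplus\k[[x]]\,y$, whose image consists exactly of the pairs of functions whose $2$-jets agree at the origin. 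Hence $\delta=3$ and $g=\delta-m+1=2$. Equivalently, completing the square identifies the germ with $A_5=V(y^2-x^6)$ away from characteristic $2$, for which $\delta=3$.

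\emph{The stable model.} Here I would exhibit the singularity as a limit of smooth hyperelliptic curves and run stable reduction, invoking Proposition \ref{prop:tailI} for the general statement. Take the family $y^2=\prod_{i=1}^{6}(x-a_it)$ over a trait, with the $a_i$ distinct, whose special fibre $y^2=x^6$ is the $A_5$ germ. The weighted blow-up $x=tu$, $y=t^3w$ introduces the curve $\Gamma\colon w^2=\prod_{i=1}^{6}(u-a_i)$, a smooth genus-two curve whose Weierstrass points lie at the finite values $u=a_i$. Since $\deg\prod(u-a_i)=6$ is even, the fibre over $u=\infty$ consists of two distinct points, interchanged by the hyperelliptic involution $w\mapsto-w$; as $u=\infty$ is not a Weierstrass point, these form a \emph{conjugate} pair. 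Tracking $y=t^3w\approx\pm(tu)^3=\pm x^3$ shows they map to the two branches $y=\pm x^3$ of $A_5$. Thus the stable tail is $\Gamma$ attached along the conjugate pair over $u=\infty$: a genus-two bridge with conjugate nodes.

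\emph{The automorphisms.} For $C=\PP^1\cup_{A_5}\PP^1$ I would first record the gluing datum extracted above: the $A_5$ point glues $q_1\in\PP^1$ to $q_2\in\PP^1$ together with an identification of their $2$-jets, so that $\OO_C$ consists of pairs of functions with matching $2$-jets at the singular point $q$. Any marked automorphism $\phi$ fixes $q$ and fixes each (labelled, smooth) marking; since a marking cannot leave its component, $\phi$ preserves each branch. On the branch carrying at least two markings, $\phi$ fixes $q$ together with two further points, hence is the identity and in particular has the $2$-jet of $\id$ at $q$; compatibility with the gluing forces the $2$-jet of $\phi$ on the other branch to be that of $\id$ as well, and an automorphism of $\PP^1$ fixing a point with the $2$-jet of the identity there is the identity. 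Hence $\phi=\id$. I would also note sharpness: if each branch carries a single marking, the common scaling fixing $q$ and the two markings survives the $2$-jet matching, producing a $\Gm$ of automorphisms.

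\emph{Where the difficulty lies.} The routine parts are the two hypersurface/linear-algebra computations. The genuine obstacle is the stable reduction of the middle paragraph, and specifically the Weierstrass-versus-conjugate dichotomy: one must match the two branches of the singularity to a \emph{conjugate} (rather than Weierstrass) pair on the tail, and check that the naive weighted blow-up already yields the stable limit without further contractions or base change. This is exactly the phenomenon separating the two-branch $A_5$ case from the ramphoid cusp of Lemma \ref{lem:unibranch}, and it is also what makes the $2$-jet (rather than merely nodal) nature of the gluing — the crux of the automorphism count — geometrically meaningful.
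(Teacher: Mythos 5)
Your argument is correct for the Fact as stated, but it takes a genuinely different and more hands-on route than the paper, which derives the three claims from Proposition \ref{prop:classification} (the graded analysis of $\tR/R$ exhibits the $A_5$-germ as the type $I\!I_2$ singularity, Gorenstein of genus two), from Proposition \ref{prop:tailII} (the relative hyperelliptic cover $\overline{\mathcal C}\to\PP(\bar\pi_*\omega_{\overline{\mathcal C}/\dvr})$ marks the two special branches, whence the conjugate pair), and from Corollary \ref{cor:explicitnoaut}, itself resting on the tangent-sheaf computation of Lemma \ref{lem:aut}. Your direct $\delta$-computation, the quasi-homogeneous modification of $y^2=\prod_i(x-a_it)$, and the M\"obius analysis of the $2$-jet gluing are all sound, and the explicit family is a pleasant complement to the paper's more uniform machinery, which by contrast handles all $m$ and both singularity types at once. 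Three caveats. First, the general statement you invoke is Proposition \ref{prop:tailII} (type $I\!I$, conjugate points), not Proposition \ref{prop:tailI} (type $I$, Weierstrass points). Second, your blow-up treats one particular smoothing: it exhibits a genus-two bridge with conjugate nodes, but does not by itself show that \emph{every} contraction to an $A_5$-point forces the attaching points to be conjugate; that is precisely what Proposition \ref{prop:tailII}(1) (equivalently, the balancing of $\omega_{\mathcal C/\dvr}(D)$ on the exceptional curve) supplies, so your appeal to it is needed rather than optional. Third, your closing sharpness remark is inaccurate: by Lemma \ref{lem:crimping} the gluings of two one-marked copies of $\PP^1$ into an $A_5$-point form, modulo reparametrisation, the stack $\mathcal N_T\simeq[\Aaff^1/\Gm]$ with \emph{two} isomorphism classes, and only the atom (crimping parameter $\beta=0$, which is exactly your ``matching $2$-jets in scaling coordinates'') carries the $\Gm$; the non-atom has trivial automorphisms even with a single marking on each branch. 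This last point does not affect the implication actually claimed in the Fact, since once a branch carries three special points its automorphism is the identity, and the conductor condition then kills the $2$-jet, hence the whole automorphism, on the other branch for any value of the crimping parameter.
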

See Proposition \ref{prop:classification} and Corollary \ref{cor:explicitnoaut}. Going back to $\oM_{2,2}$, suppose $C$ is the nodal union of a genus two curve $Z$ with a rational tail $R$ supporting the two markings, so that $\lev(Z)=1$. If $R$ is attached to a Weierstrass point of $Z$, we may simply contract the latter (in a one-parameter smoothing), thus producing an irreducible ramphoid cusp with two markings. If instead $R$ is attached to a non-Weierstrass point $q_1$ of $Z$, we may blow-up the one-parameter family at the conjugate point $\sigma(q_1)$ in the central fibre, and then contract $Z$ to get a \emph{dangling} $A_5$-singularity (meaning that one of the branches is unmarked), which nonetheless has trivial automorphism group. We pursue this strategy, which makes our compactifications not semistable (see \cite[Definition 1.2]{SMY-towards} for the terminology). The necessity to include such curves was prefigured in \cite{AFSGm}.

To complete the picture, note that, in order to fix a deformation-open class of singularities, we need to allow cusps and tacnodes as well, due to the following
\begin{fact}
 The singularities appearing in the miniversal family of an $A_m$-singularity are all and only the $A_l$-singularities with $l\leq m$.
\end{fact}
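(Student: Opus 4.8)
The plan is to write the miniversal deformation down explicitly and then read off the fibrewise singularities from the elementary geometry of double covers of the line. The $A_m$-singularity is the isolated plane-curve hypersurface singularity $f=y^2-x^{m+1}$, so its miniversal deformation is governed by the Tjurina algebra $\mathcal O/(f,f_x,f_y)$, which for the quasihomogeneous $f$ agrees with the Milnor algebra. First I would compute $(f_x,f_y)=\bigl((m+1)x^m,\,2y\bigr)=(x^m,y)$, using that $2$ and $m+1$ are invertible in $\k$ — precisely what working over $\operatorname{Spec}\mathbb Z[\tfrac1{30}]$ secures in the relevant range $m\le 5$. Then $\{1,x,\dots,x^{m-1}\}$ is a monomial basis, the Milnor number is $m$, and the miniversal family can be presented as $F=y^2-P_t(x)$ over the base $\Aaff^m_t$, where $P_t(x)=x^{m+1}+t_{m-1}x^{m-1}+\dots+t_1x+t_0$ (the absence of an $x^m$-term being harmless, as it can always be removed by an $x$-translation). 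The fibres are thus the double covers $y^2=P_t(x)$ of the affine line, branched over the roots of $P_t$.

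Second, I would classify the singular points of such a fibre. Off $\{y=0\}$ the projection to the $x$-line is étale and the curve is smooth, whereas on $\{y=0\}$ the Jacobian criterion (here again $\operatorname{char}\k\neq2$) places a singular point exactly over a \emph{multiple} root of $P_t$. At a root $x=a$ of multiplicity $k$ one writes $P_t(x)=(x-a)^k u(x)$ with $u(a)\neq0$; since the unit $u$ has a square root in the local ring, the substitution $v=y\,u(x)^{-1/2}$, $w=x-a$ identifies the germ with $v^2=w^k$, the standard $A_{k-1}$-singularity. Hence every singular point of every fibre has type $A_{k-1}$ for some root multiplicity $k$.

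Third comes the bookkeeping. As $\deg P_t=m+1$, any multiplicity satisfies $1\le k\le m+1$, so the local types that occur are the $A_l$ with $0\le l\le m$ (with $l=0$ a smooth point), and the central fibre $t=0$ is $A_m$ itself. To see that every such type is present arbitrarily close to the base point, I would start from the monic degree-$(m+1)$ polynomial $(x-\varepsilon)^{l+1}x^{\,m-l}$; translating $x$ to kill its $x^m$-coefficient turns it into some $P_{t(\varepsilon)}$ with $t(\varepsilon)\to0$ as $\varepsilon\to0$, and the fibre over $t(\varepsilon)$ acquires an $A_l$-singularity over its multiplicity-$(l+1)$ root. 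Combined with the local analysis of the previous step — which rules out any other type — this yields that the singularities appearing are all and only the $A_l$ with $l\le m$.

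The arithmetic is elementary throughout; the only point needing a little care is the normalisation of $y^2=(x-a)^k u(x)$ into $v^2=w^k$, which rests on extracting a square root of the unit $u$ and hence on $\operatorname{char}\k\neq2$. The one conceptual subtlety is that ``appearing in the miniversal family'' refers to the germ at $t=0$: the explicit degenerations $t(\varepsilon)\to0$ settle the existence half by hand, so that one does not even need to invoke versality to produce the smaller $A_l$'s — they are exhibited directly as adjacencies of $A_m$.
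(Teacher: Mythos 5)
Your argument is correct, but it takes a genuinely different route from the paper, which in fact offers no computation for this statement at all: it defers to Theorem \ref{thm:ADE}, Grothendieck's general adjacency result for ADE singularities (the types appearing in the miniversal deformation are exactly those whose Dynkin diagram embeds as a full subgraph), itself quoted from Casalaina-Martin--Laza without proof. You replace that black box, for the $A$-series, by a self-contained calculation: the miniversal family is the family of double covers $y^2=P_t(x)$ with $P_t$ monic of degree $m+1$ and no $x^m$-term; a fibre is singular exactly over the multiple roots of $P_t$, where the complete local ring is that of $v^2=w^k$ (extracting a square root of the unit $u$ is licit since $\k$ is algebraically closed of characteristic $\neq 2$); and the degenerations $(x-\varepsilon)^{l+1}x^{m-l}$ realise every $A_l$ with $l\leq m$ arbitrarily close to the origin of the base, so versality need not even be invoked for the existence half. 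Your normalisation of the $x^m$-coefficient uses invertibility of $m+1$, which is precisely what working over $\mathbb Z[\frac{1}{30}]$ secures in the range $m\leq 5$ that the paper actually needs. The trade-off is this: the citation to Theorem \ref{thm:ADE} also covers the $D$-series adjacencies (e.g.\ $D_5$ inside $D_6$, i.e.\ type $I_2$ inside type $I\!I_3$) that the paper uses in the surrounding remark, and your double-cover picture does not reach those; on the other hand your computation is elementary, explicit about the characteristic hypotheses, and yields finer information -- it shows that the collections of $A_{l_i}$'s occurring simultaneously in a single nearby fibre are exactly those with $\sum_i(l_i+1)\leq m+1$, not merely which individual types occur.
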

See Theorem \ref{thm:ADE} for a more general statement - valid for all ADE singularities - due to A. Grothendieck. Since the semistable tail of a cusp (resp. tacnode) is an elliptic tail (resp. bridge), if we want our moduli space to remain separated, we should require that the level of a genus one subcurve be at least three at the same time as we introduce cusps and tacnodes. Hybrid situations may occur, such as an elliptic curve with a cusp, or an irreducible tacnode; since we need to allow a tacnode and a cusp sharing a branch, we should impose the level condition on genus one subcurves only when they are \emph{nodally attached}. Besides, in the latter example, we need to break the $\mathfrak S_2$-symmetry (relabelling the markings) in order to have a unique limit: we declare that $p_1$ must lie on the cuspidal branch. See Figure \ref{fig:one_tail_example}.

\begin{figure}[h]
 \includegraphics[width=\textwidth]{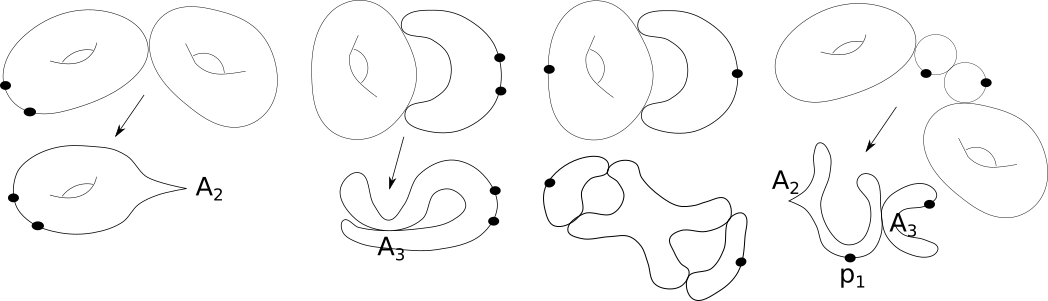}
  \caption{Examples of $2$-pointed stable curves and their $1$-stable counterparts.}\label{fig:one_tail_example}
\end{figure}

We are now in a position to cast a plausible definition of $\oM_{2,2}^{(1)}$.
\begin{definition}
 A connected, reduced, complete curve of arithmetic genus two $C$ over an algebraically closed field $\k$, with smooth and disjoint markings $(p_1,p_2)$, is \emph{$1$-stable} if:
 \begin{enumerate}[leftmargin=0.7cm]
  \item $C$ has only $A_1-,\ldots,A_4-$ and dangling $A_5-$ singularities.
  \item $C$ coincides with its minimal subcurve of arithmetic genus two.
  \item A subcurve of arithmetic genus one is either nodally attached and of level three, or it is not nodally attached and it contains $p_1$.
 \end{enumerate}
\end{definition}

The main result of the paper is that $\oM_{2,2}^{(1)}$ is a proper Deligne-Mumford stack, and the generalisation of this statement to an arbitrary number of markings and a range of stability conditions that we are going to discuss in the next sections.

Let us note in passing that the birational map $\oM_{2,2}\dashrightarrow\oM_{2,2}^{(1)}$ is not defined everywhere. The reason boils down to the following
\begin{fact}
 There is only one isomorphism class of $2$-pointed curves whose normalisation is $(\PP^1,q_1)\sqcup(\PP^1,q_2,p_1,p_2)$ and having an $A_5$-singularity at $q_1=q_2$. On the other hand, the moduli space of $2$-pointed irreducible curves of geometric genus zero with an $A_4$-singularity is isomorphic to $\Aaff^1$.
\end{fact}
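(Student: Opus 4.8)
The plan is to treat both halves as computations in \emph{crimping data}, i.e. in the differential-geometric gluing information needed to assemble the singularity from its branches, as outlined in the introduction. Recall from Proposition~\ref{prop:classification} that producing an $A_5$-singularity from two smooth branches amounts to prescribing an isomorphism $\phi\colon \k[s]/(s^3)\xrightarrow{\sim}\k[t]/(t^3)$ of the second-order jets at the two preimages $q_1,q_2$: the conductor is $\m_{q_1}^3\oplus\m_{q_2}^3$, of colength $2\delta=6$, so the local ring must be the graph of such a $\phi$. Likewise, building an $A_4$-singularity on a unibranch curve (cf.\ Lemma~\ref{lem:unibranch}) amounts to choosing a suitable two-dimensional subalgebra of the third-order jets $\k[t]/(t^4)$ at the preimage $q$ of the singular point.

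For the first statement I would argue by an orbit count. The thrice-marked component $(\PP^1,q_2,p_1,p_2)$ has no non-trivial automorphisms, while $\Aut(\PP^1,q_1)$ is the two-dimensional group $\{s\mapsto a_1 s+a_2 s^2+\cdots : a_1\neq 0\}$ of Möbius transformations fixing $q_1$, which I claim maps isomorphically onto $\Aut(\k[s]/(s^3))$: a Möbius transformation tangent to the identity to second order at a fixed point is the identity, so the $2$-jet map is injective between groups of the same dimension, hence an isomorphism. Two curves of the given type are isomorphic precisely when their gluing data $\phi,\phi'$ differ by reparametrising the branches through automorphisms of the components; since the $q_2$-branch is rigid, only precomposition by $\Aut(\k[s]/(s^3))$ is available. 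But the set of such isomorphisms $\phi$ is a torsor under $\Aut(\k[s]/(s^3))$ acting by precomposition, so this action is simply transitive: there is a single orbit, hence a unique isomorphism class (compare Corollary~\ref{cor:explicitnoaut}).

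For the second statement I would first rigidify, moving the triple $(q,p_1,p_2)$ to $(0,1,\infty)$; this is possible in a unique way, after which no non-trivial automorphism of $\PP^1$ survives and the standard coordinate $t$ at $q=0$ is canonically determined. The moduli problem thus becomes the bare space of $A_4$-crimping data at $q$ in this fixed coordinate. A direct computation identifies the admissible subalgebras $S\subseteq\k[t]/(t^4)$ containing $1$: writing $S=\k\langle 1,u\rangle$ with $u=\alpha t+\beta t^2+\gamma t^3$, the requirement $u^2\in S$ forces the coefficient of $t$ to vanish, so $u=\beta t^2+\gamma t^3$ and $S=S_{[\beta:\gamma]}$ with $[\beta:\gamma]\in\PP^1$; one then checks on value semigroups that $\beta\neq 0$ yields the $A_4$-singularity $\langle 2,5\rangle$, whereas $\beta=0$ yields the non-Gorenstein cusp $\k[t^3,t^4,t^5]$ excluded here. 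Since the rigidification leaves no automorphisms, distinct $S$ give non-isomorphic curves, so the locus of genuine $A_4$-data is $\{\beta\neq 0\}\cong\Aaff^1$, with coordinate $\gamma/\beta$, and this is the asserted isomorphism.

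The main obstacle in both parts is the bookkeeping of crimping data against automorphisms: making precise that the gluing (respectively subalgebra) data forms a torsor, or a one-parameter family, under the relevant jet group, and matching the dimension of that group against the rigidity supplied by the markings. For $A_5$ the two-dimensional jet freedom is exactly absorbed by $\Aut(\PP^1,q_1)$ because the opposite branch is rigid, collapsing everything to a point; for $A_4$ the three markings instead exhaust all of $\Aut(\PP^1,q)$, so the one-dimensional crimping parameter survives untouched as $\Aaff^1$. Verifying that the surjections from geometric automorphisms onto jet reparametrisations are as claimed, and that no residual identifications remain, is the delicate point; the remainder is the direct jet computation sketched above.
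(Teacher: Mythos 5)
Your argument is correct and is essentially the paper's own: both halves reduce to computing the crimping space of the singularity (Lemma \ref{lem:crimping}) and then quotienting by the automorphisms of the pointed normalisation. Your torsor argument makes precise why the two-dimensional group $\Aut(\PP^1,q_1)$ absorbs the two-dimensional family of $A_5$-gluing data, and your locus $\{\beta\neq 0\}\subseteq\PP^1_{[\beta:\gamma]}$ is exactly the paper's $\PP^1\setminus\{\ell\}$ obtained by collapsing a line in the tangent space of the cusp away from its tangent cone.
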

The second statement can be motivated as follows: the pointed normalisation of such a curve is $(\PP^1,q,p_1,p_2)$, which has neither automorphisms, nor deformations. To produce an $A_4$-singularity at $q$ we may first collapse a non-zero tangent vector at $q$ (no choice involved), producing a cusp, and then collapse a line in the tangent space at the cusp, avoiding the support of its tangent cone $\ell$ (therefore, the moduli space is $\PP^1\setminus\{\ell\}=\Aaff^1$). See Lemma \ref{lem:crimping} and the discussion thereafter.

Let $\Delta=\Delta_{2,\emptyset|0,\{1,2\}}\subseteq\oM_{2,2}$ be the divisor of rational tails, and $\mathcal W\subseteq\oM_{2,2}$ the codimension two locus of Weierstrass tails. The $1$-stable limit of any point in $\Delta\setminus\mathcal W$ is the dangling $A_5$-singularity, while the $1$-stable limit of a Weierstrass tail is ill-defined (it depends on the choice of a $1$-parameter smoothing); we conjecture that the rational map (identity on the locus of smooth curves) admits a factorisation:
\bcd
& \operatorname{Bl}_{\mathcal W}(\oM_{2,2})\ar[dl]\ar[dr] & \\
\oM_{2,2}\ar[rr,dashed] & & \oM_{2,2}^{(1)}
\ecd
The blow-up should also encode enough information to contract an unmarked elliptic bridge to a tacnode. As it turns out, a modular desingularisation can be obtained by starting from the moduli space of pointed admissible covers, and performing a logarithmic modification based on some piecewise linear function on the tropicalization of the source curve. These methods have been developed in \cite{BC}, and we do not address the desingularisation here.

\subsection{Relation to other work} It would be interesting to compare $\oM_{2,2}^{(1)}$ explicitly with Smyth's $\oM_{2,2}(\pazocal Z)$ \cite{SMY-towards}, for the extremal assignment $\pazocal Z$ of unmarked subcurves; here we only note that, while the divisor $\Delta_{1,\{1\}|1,\{2\}}$ is contracted in $\oM_{2,2}^{(1)}$, the latter contains a copy of $\oM_{0,4}$ (see the third column, second row of Figure \ref{fig:one_tail_example}) that is replaced by the class of the rational $4$-fold point in $\oM_{2,2}(\pazocal Z)$. $\oM_{2,2}^{(1)}$ seems closely related to the space $\pazocal U^{ns}_{2,2}(ii)$ constructed in \cite{PolishchukJohnson}. More generally, it would be interesting to relate $\oM_{2,n}^{(m)}$ (for high values of $m$) to Polishchuk's moduli of curves with nonspecial divisors \cite{Polishchuk-nonspecial}. Finally, it seems plausible that $\oM_{2,n}^{(m)}$ (for low values of $m$) corresponds to a pointed variant of the spaces of admissible hyperelliptic covers with AD singularities constructed in \cite{Fedorchuk-hyperellipticAD}.

\subsection{Outline of results and plan of the paper} In Section \ref{sec:sing} we classify all the Gorenstein curve singularities of genus two. They come in two families: the first one ($I$) includes the ramphoid cusp, the $D_5$-singularity, and for $m\geq 3$ the union of \emph{a singular branch} (a cusp) and $m-1$ lines living in $\Aaff^m$. The second one ($I\!I$) includes the $A_5$- and $D_6$-singularities, and for $m\geq 4$ the union of \emph{two tangent branches} (forming a tacnode) with $m-2$ lines in $\Aaff^{m-1}$. See Proposition \ref{prop:classification}.

In Section \ref{sec:crimp} we translate the condition that a complete pointed curve of genus two has no infinitesimal automorphisms into a \emph{mostly} combinatorial criterion. For every fixed number of branches $m$ and genus two singularity type $\in\{I,I\!I\}$, there are two isomorphism classes of pointed curves whose normalisation is $\bigsqcup_{i=1}^m(\PP^1,q_i,p_i)$ and having a singularity of the prescribed type at $q$; one of them has $\Aut(C,p)=\Gm$, while the other one has trivial automorphism group. This phenomenon is a novelty to genus two. We take a detour into moduli spaces of singularities to justify the claim, and explain how to interpret the \emph{crimping spaces} geometrically in terms of the information we need to construct a genus two singularity from a (non-Gorenstein) singularity of lower genus. This is not strictly necessary in what follows, since the singularity with one-pointed branches never satisfies the level condition we demand from our curves, yet this description is useful in analysing the indeterminacy of $\oM_{2,n}^{(m_1)}\dashrightarrow\oM_{2,n}^{(m_2)}$.

In Section \ref{sec:sstails} we study the (semi)stable limits; starting from a $1$-parameter family of semistable curves with smooth generic fibre and regular total space, we show that the shape of a subcurve of the central fibre that can be contracted into a Gorenstein singularity is strongly constrained. Singularities of type $I$ arise when the special branch (corresponding to the cusp in the contraction) is attached to a \emph{Weierstrass point} of the minimal subcurve of genus two (the \emph{core}), while singularities of type $I\!I$ occur when the special branches (corresponding to the tacnode in the contraction) are attached to \emph{conjugate points}. Furthermore, the size of the curve to be contracted only depends on one number - roughly speaking, the distance of the special branches from the core. The first statement is a consequence of the following simple observation: if $\phi\colon\widetilde{\mathcal C}\to\mathcal C$ is a contraction to a family of Gorenstein curves, $\phi^*\omega_{\mathcal C}$ is trivial on a neighbourhood of the exceptional locus of $\phi$, and it coincides with $\omega_{\widetilde{\mathcal C}}$ outside it. Now, whereas the dualising line bundle of a Gorenstein curve of genus one with no separating nodes is trivial (see \cite[Lemma 3.3]{SMY1}) and all smooth points display the same behaviour (in the sense that they are non-special), the simplest instance of Brill-Noether theory manifests itself in genus two, with the distinction between Weierstrass and non-Weierstrass points, and the expression $\omega_Z=\OO_Z(q+\sigma(q))$. The correct extension of these concepts to nodal curves was formulated in the '80s within the theory of admissible covers and limit linear series. We phrase the shape restrictions in terms of the existence of a certain piecewise-linear function on the dual graph of the central fibre.

In Section \ref{sec:stability} we define the notion of \emph{$m$-stable} $n$-pointed curve of genus two, for every $1\leq m<n$. The basic idea is to trade worse singularities - of both genus one and two, bounded by $m$ in the sense of the embedding dimension - with more constraints on the combinatorics of the dual graph - the \emph{level} condition, which bounds below in terms of $m$ the number of special points (nodes and markings) that any subcurve of genus one or two has to contain. On the other hand, it is already clear from the discussion above that we need to break the $\mathfrak S_n$-symmetry, in order to write the dualising line bundle of the minimal subcurve of genus two as $\OO_Z(q_1+\sigma(q_1))$, in other words to choose which branches of a semistable model are to be dubbed special. We do so by using the first marking as a reference point, so that $q_1$ comes to denote the point of $Z$ closest to $p_1$. This shapes our algorithm to construct the $m$-stable limit of a given $1$-parameter smoothing. Unavoidably, the formulation of the stability condition is slightly involved, including a prescription of the interplay between $p_1$ and the singularity. We prove that the moduli stack of $m$-stable curves is algebraic, and it satisfies the valuative criterion of properness.

\subsection{Future directions of work} Besides regarding this paper as a case-study of the birational geometry of moduli spaces of curves, it also has some nontrivial applications to Gromov-Witten theory. We set up some questions we would like to come back to in future work.
\begin{enumerate}[leftmargin=.7cm]
 \item The indeterminacy of the rational map $\oM_{2,n}^{(m_1)}\dashrightarrow\oM_{2,n}^{(m_2)}$ can be resolved modularly: a space dominating all the $\oM_{2,n}^{(m)}$ can be obtained as a logarithmic modification (as in \cite{RSPW1}) of the space of \emph{admissible covers} (of degree two, with rational target and six ramification points). We shall describe this construction in more details in a forthcoming paper. We wonder whether the models constructed here correspond to the trace of the minimal model program on a two-dimensional slice of the cone of pseudo-effective divisors, as in \cite{SMY2}.
 
 Recently, S. Bozlee, B. Kuo, and A. Neff have classified all the compactifications of $\pazocal{M}_{1,n}$ in the stack of Gorenstein curves with distinct markings \cite{BKN} - it turns out that there are many more than envisioned by Smyth, although the numerosity arises more from combinatorial than geometric complications. The techniques developed in \cite{BC} suggest that more birational models of $\pazocal M_{2,n}$ could be constructed by allowing non-reduced Gorenstein curves as well. It would be interesting if all compactifications of $\pazocal{M}_{2,n}$ could be classified by a mixture of our techniques. More generally, we could ask the same question about the moduli space of hyperelliptic curves, although it is known not to be a Mori dream space \cite{BarrosMullane}.
 
 
 \item Enumerative geometry: the link between reduced Gromov-Witten invariants in genus one (see for example \cite{VZ,Zingerred,LZ}) and maps from singular curves (see \cite{VISC}) was partially uncovered in \cite{BCM}, and brought in plain view by \cite{RSPW1,RSPW2}. In joint work with F. Carocci \cite{BC}, we exploit similar techniques to desingularise the main component of the space of genus two maps to projective space. We enrich the logarithmic structure by including a compatible admissible cover. A universal morphism to a Gorenstein curve is constructed on a logarithmically \'etale model of the base, encoding the choice of a \emph{tropical canonical divisor}. We stress the fact that non-reduced fibres (\emph{singular ribbons}) arise naturally in that context. The main component is recovered as those maps that factor through the Gorenstein contraction. Our desingularisation is less efficient than \cite{HLN}, but maps from singular curves provide a conceptual definition of reduced invariants for projective complete intersections and beyond. We hope that they will make comparison results (standard vs. reduced) easier to prove. This would lead to a modular interpretation of Gopakumar-Vafa invariants \cite{Pandha}.
\end{enumerate}

\section{Gorenstein curve singularities of genus two and their dualising line bundles}\label{sec:sing}

We produce an algebraic classification of the (complete) local rings of Gorenstein curve singularities of genus two over an algebraically closed field $\k$. The proof involves a technical calculation with the conductor ideal. Alternatively, one can look for a local generator of the dualising line bundle at the singularity; we remark on this below.

Let $(C,q)$ be the germ of a reduced curve singularity, and let $(R,\m)$ denote $(\hat\OO_{C,q},\m_q)$, with normalisation $(\tR,\tm)\simeq\left(\k[\![t_1]\!]\oplus\ldots\oplus\k[\![t_m]\!],\langle t_1,\ldots,t_m\rangle\right)$.
Here $m$ is the number of branches of $C$ at $q$. Recall the Definition \ref{def:genus} of the genus:
\[g=\delta-m+1;\]
so, for genus two, $\delta=m+1$. Following \cite[Appendix A]{SMY1}, we consider $\tR/R$ as a $\mathbb Z$-graded module with:
\[ (\tR/R)_i:=\tm^i/(\tm^i\cap R)+\tm^{i+1};\]
furthermore, adapting Smyth's remarks in \emph{loc. cit.} to our situation:
\begin{enumerate}
\item $m+1=\delta(p)=\sum_{i\geq 0}\dim_\k(\tR/R)_i;$
\item $2=g=\sum_{i\geq 1}\dim_\k(\tR/R)_i;$
\item\label{obs:add} if $(\tR/R)_i=(\tR/R)_j=0$ then $(\tR/R)_{i+j}=0$.
\end{enumerate}
We will also make use of the following observations:
\begin{enumerate}[resume]
 \item\label{obs:igrad} $\sum_{i\geq j}(\tR/R)_i$ is a grading of $\tm^j/(\tm^j\cap R)$;
 \item\label{obs:ses} there is an exact sequence of $R/\m=\k$-modules:
 \[ 0\to A_i:=\frac{\tm^i\cap R}{\tm^{i+1}\cap R}\to \frac{\tm^i}{\tm^{i+1}}\to \left(\tR/R\right)_i\to 0\]
\end{enumerate}
\begin{lem}\label{lem:unibranch}
 There are two unibranch curve singularities of genus two; only one of them is Gorenstein, namely the $A_4$-singularity or \emph{ramphoid cusp}: $V(y^2-x^5)\subseteq\Aaff^2_{x,y}$.
\end{lem}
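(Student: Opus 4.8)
The plan is to reduce everything to the combinatorics of the value semigroup and then normalise coordinates. Since $m=1$ the normalisation is $\tR=\k[\![t]\!]$, a discrete valuation ring with valuation $v$, and the genus-two condition reads $\delta=2$. The set $S=v(R\setminus\{0\})\subseteq\N$ is a numerical semigroup, and for each $i\geq 1$ the exact sequence (\ref{obs:ses}) exhibits $(\tR/R)_i$ as a quotient of the one-dimensional space $\tm^i/\tm^{i+1}$, so $\dim_\k(\tR/R)_i\in\{0,1\}$, with value $1$ precisely when $i$ is a \emph{gap} of $S$ (i.e. no element of $R$ has value exactly $i$). As $\sum_{i\geq 1}\dim_\k(\tR/R)_i=g=2$, the semigroup $S$ has exactly two gaps.

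First I would pin down the two possible semigroups. No element of $R$ can have value $1$: such an element would be a uniformiser, and completeness of $R$ would force $R=\tR$, hence $\delta=0$; so $1$ is a gap. Writing $a\geq 2$ for the second gap, observation (\ref{obs:add}) (closure of $S$ under addition) rules out $a\geq 4$, since then $2,3\in S$ and $a=2+(a-2)\in S$, a contradiction. This leaves $a=2$, giving $S=\langle 3,4,5\rangle$, and $a=3$, giving $S=\langle 2,5\rangle$.

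Next I would show that each semigroup determines the analytic isomorphism class. For $S=\langle 3,4,5\rangle$ all gaps are $<3$, so $\tm^3\subseteq R$ and therefore $R=\k\oplus\tm^3=\k+t^3\k[\![t]\!]\cong\k[\![t^3,t^4,t^5]\!]$, with no moduli. For $S=\langle 2,5\rangle$ I would choose $x\in R$ with $v(x)=2$; since $\operatorname{char}\k\neq 2$, the unit $x/t^2$ admits a square root in $\k[\![t]\!]$, so after replacing $t$ by a suitable uniformiser we may assume $x=t^2$, whence $\k[\![t^2]\!]\subseteq R$. Viewing $R$ as a $\k[\![t^2]\!]$-submodule of $\k[\![t]\!]=\k[\![t^2]\!]\oplus t\,\k[\![t^2]\!]$, the even part is exactly $\k[\![t^2]\!]$, while the odd part is a principal $\k[\![t^2]\!]$-module generated by any element $y$ of minimal (odd) value $5$. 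Rescaling $y$ by the unit $y/t^5\in\k[\![t^2]\!]$ shows $t^5\in R$, so $R=\k[\![t^2,t^5]\!]$, the completed local ring of $V(y^2-x^5)$.

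Finally, the Gorenstein dichotomy follows from the symmetry criterion for the value semigroup of a unibranch singularity: $R$ is Gorenstein iff $S$ is symmetric, iff its Frobenius number equals $2\delta-1=3$. The semigroup $\langle 2,5\rangle$ has Frobenius number $3$ and is symmetric, so $V(y^2-x^5)$ is Gorenstein---as is clear in any case, it being a plane curve---whereas $\langle 3,4,5\rangle$ has Frobenius number $2\neq 3$ and is not symmetric, so $\k[\![t^3,t^4,t^5]\!]$ is not Gorenstein. The only step requiring genuine care is the coordinate normalisation for $\langle 2,5\rangle$, where the hypothesis $\operatorname{char}\k\neq 2$ is used to straighten the value-$2$ element; the rest is bookkeeping for a two-gap numerical semigroup.
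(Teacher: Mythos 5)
Your proof is correct, and its combinatorial core coincides with the paper's: you locate the two indices $i\geq 1$ with $\dim_\k(\tR/R)_i=1$ (equivalently, the two gaps of the value semigroup), use the additivity observation to force the second gap into $\{2,3\}$, and then normalise coordinates to land on $\k[\![t^3,t^4,t^5]\!]$ and $\k[\![t^2,t^5]\!]$; the paper runs exactly this dichotomy in terms of the vector of graded dimensions. The differences are worth recording. First, you package the bookkeeping in the classical language of numerical semigroups, whereas the paper keeps the graded module $(\tR/R)_\bullet$ in the foreground --- a deliberate choice on its part, since that formalism (unlike value semigroups) survives into the multibranch classification of Proposition 2.3. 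Second, your normalisation of the value-$2$ element by extracting a square root of the unit $x/t^2$ is a clean way of making the hypothesis $\operatorname{char}(\k)\neq 2$ explicit; the paper just writes $x=t^2+ct^3$ and invokes a coordinate change. Third, and most substantively, you actually \emph{prove} the Gorenstein dichotomy, via Kunz's symmetry criterion (Frobenius number $=2\delta-1$), where the paper only asserts that $\k[\![t^3,t^4,t^5]\!]$ is non-Gorenstein; the paper's implicit justification would be the conductor criterion $\dim_\k(R/I)=\delta$ recalled immediately after the lemma (here $I=\tm^3$, so $\dim_\k(R/I)=1\neq 2$), while Gorensteinness of $V(y^2-x^5)$ is automatic for a plane curve, as you note. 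Your route buys a self-contained verification of the Gorenstein claims at the cost of importing the semigroup symmetry theorem; both are sound.
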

\begin{proof}
 In the unibranch case $\dim_\k(\tR/R)_1\leq 1$, hence equality holds (by observation \eqref{obs:add} above). We are left with two cases:
 \begin{itemize}[leftmargin=15pt]
  \item Either $\dim_\k(\tR/R)_2=1$ and $\dim_\k(\tR/R)_i=0$ for all $i\geq 3$: in this case $\tm^3\subseteq\m$ by observation \eqref{obs:igrad}. From \eqref{obs:ses} we see that $\tm^3=\m$, hence $R\simeq\k[\![t^3,t^4,t^5]\!],$ a non-Gorenstein singularity sitting in $3$-space, which is obtained by collapsing a second-order infinitesimal neighbourhood of the origin in $\Aaff^1$ (we shall call it an ordinary cusp of genus two).
  
  \item Or $\dim_\k(\tR/R)_3=1$ and $\dim_\k(\tR/R)_i=0$ for $i=2$ and for all $i\geq 4$: in this case $\tm^4\subseteq\m$ by observation \eqref{obs:igrad}. On the other hand from $\dim_\k(\tm^2\cap R/\tm^3\cap R)=1$ we deduce that there is a generator of degree $2$, and from $\dim_\k(\tm^3\cap R/\tm^4\cap R)=0$ there is none of degree $3$. We may write the generator as $x=t^2+ct^3$, and $\m=\langle x\rangle+\tm^4$. Up to a coordinate change (i.e. automorphism of $\k[\![t]\!]$), we may take $x=t^2$, and \[\m/\m^2=\langle t^2,t^5\rangle,\] so $R\simeq\k[\![x,y]\!]/(x^5-y^2)$, as anticipated.
 \end{itemize}
\end{proof}

From now on, we only look for Gorenstein singularities. With notation as above, let $I=(R:\tilde R)=\operatorname{Ann}_R(\tilde R/R)$ be the \emph{conductor ideal} of the singularity. Recall e.g. \cite[Proposition VIII.1.16]{AK}: $(C,q)$ is Gorenstein if and only if
\[\dim_\k(R/I)=\dim_\k(\tR/R)(=\delta).\]

Recall from \cite[Definition 2-1]{Stev} that a curve singularity $(C,q)$ is \emph{decomposable} if $C$ is the union of two curves $C_1$ and $C_2$ that lie in distinct smooth spaces intersecting each other transversely in $q$. With a parametrisation \[\k[\![x_1,\ldots,x_l]\!]\to\k[\![t_1]\!]\oplus\ldots\oplus\k[\![t_m]\!]\] given by $x_i=x_i(t_1,\ldots,t_m)$, being decomposable means that there is a partition $S_0\sqcup S_1=\{1,\ldots,m\}$ such that for every $i\in\{1,\ldots,l\}$ there exists a $j\in\{0,1\}$ such that $x_i$ does not depend on any $t_s$ for $s\in S_{1-j}$. Aside from the node, Gorenstein singularities are never decomposable \cite[Proposition 2.1]{AFSGm}.

\begin{prop}\label{prop:classification}
 For every fixed integer $m\geq 2$, there are exactly two Gorenstein curve singularities of genus two with $m$ branches.
\end{prop}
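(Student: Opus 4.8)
The plan is to bound the graded module $\tR/R$ numerically, discard one numerical type using the conductor, and then reconstruct $R$ in the remaining cases. First I would record that subtracting observation (2) from observation (1) gives $\dim_\k(\tR/R)_0=m-1$, so that $\sum_{i\geq 1}\dim_\k(\tR/R)_i=2$. Iterating observation \eqref{obs:add} starting from $\dim_\k(\tR/R)_1=0$ would force every positive graded piece to vanish, contradicting $g=2$; hence $\dim_\k(\tR/R)_1\geq 1$. If $\dim_\k(\tR/R)_1=1$, the remaining unit sits in some degree $k\geq 2$, and applying observation \eqref{obs:add} to $i=2$, $j=k-2$ shows $k\leq 3$. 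Thus the positive part of the grading is one of
\[
(\dim_\k(\tR/R)_i)_{i\geq 1}=(2),\quad (1,1),\quad\text{or}\quad (1,0,1),
\]
all other entries being zero.

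Second, I would exclude $(2)$. Here observation \eqref{obs:igrad} gives $\tm^2\subseteq R$, so the conductor $I=(R:\tR)$ contains $\tm^2$ and $\dim_\k(\tR/I)\leq\dim_\k(\tR/\tm^2)=2m$. But Gorensteinness forces $\dim_\k(\tR/I)=\dim_\k(\tR/R)+\dim_\k(R/I)=2\delta=2m+2$, a contradiction. This is the genus-two avatar of the non-Gorensteinness of the singularity with $\m=\tm^2$.

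Third, and this is the heart of the argument, I would reconstruct $R$ up to isomorphism in the two surviving cases, using observation \eqref{obs:ses} to identify each $A_i$ with a subspace of $\tm^i/\tm^{i+1}\cong\k^m$ of codimension $\dim_\k(\tR/R)_i$, and choosing generators of $R$ realising these subspaces. In case $(1,0,1)$ a single linear direction is missing; after an automorphism of $\tR$ (reparametrising and relabelling the branches) I would force the offending branch to be cuspidal while retaining transverse linear coordinates on the others, and I expect the Gorenstein identity $\dim_\k(R/I)=\delta$ to pin down the order-three data, yielding a cusp meeting $m-1$ transverse lines (the $D_5$-singularity when $m=2$). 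In case $(1,1)$ the missing linear direction signals two branches sharing a tangent, and $\dim_\k A_2=m-1$ forces a further order-two coincidence; normalising gives a tacnode together with $m-2$ transverse lines (the $A_5$- and $D_6$-singularities for $m=2,3$). Throughout, the order-by-order normalisation is where $\operatorname{char}\k\neq 2,3,5$ enters, and where one must also verify non-decomposability.

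Finally, the grading of $\tR/R$ is an isomorphism invariant, so the two normal forms, having gradings $(1,0,1)$ and $(1,1)$, are non-isomorphic; combined with the normal forms above this yields exactly two Gorenstein genus-two singularities with $m$ branches. The main obstacle is the reconstruction step: proving \emph{rigidity}, namely that once the numerical type and the Gorenstein condition are fixed, every choice of higher-order terms in the chosen generators can be absorbed by a coordinate change, so that no moduli of the germ survive. This is exactly the point where the interplay between the special branches (cusp or tacnode) and the transverse lines must be controlled, and it is considerably more delicate than the single-type reconstruction in Smyth's genus-one analysis.
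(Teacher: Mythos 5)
Your numerical reduction is correct and matches the paper's: the grading of $\tR/R$ must be one of $(2,0,0)$, $(1,1,0)$, $(1,0,1)$, and your exclusion of the first type is a valid (mildly different) variant of the paper's argument --- you compare $\dim_\k(\tR/I)\leq\dim_\k(\tR/\tm^2)=2m$ against $2\delta=2m+2$, whereas the paper compares $\dim_\k(R/I)=m+1$ against $\dim_\k(R/\tm^2)=m-1$; both rest on $\tm^2\subseteq I$ and the Gorenstein identity $\dim_\k(R/I)=\delta$.

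However, the third step is not a proof but a declaration of intent, and it is precisely where the content of the proposition lies. Saying ``I expect the Gorenstein identity to pin down the order-three data'' and flagging rigidity as ``the main obstacle'' leaves open exactly what must be shown: that each numerical type contains \emph{one} isomorphism class rather than none or a positive-dimensional family. The paper's proof of this step is concrete and unavoidable: one chooses generators of $A_1$ modulo $\tm^3$ (resp.\ $\tm^4$), performs Gaussian elimination using the generators \emph{and their squares} to reach the forms \eqref{coordII-cs}, \eqref{coordIII-cs}; then the Gorenstein count $\dim_\k(R/I)=m+1$ forces $x_i^2\in I$ for all but one $i$, hence $t_i^2\in R$ for those $i$, which combined with $\dim_\k(\tR/R)_2=1$ (resp.\ $\dim_\k(\tR/R)_3=1$, via $t_m^3\notin R$) kills the cross-terms $\alpha_{i,m}$; indecomposability forces the surviving coefficients to be units; and finally a reparametrisation of $t_m$ and rescaling of the other $t_i$ normalises them to $1$. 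Each of these implications does real work (for instance, if $\alpha_{1,m}=0$ in type $I\!I$ the singularity fails to be Gorenstein --- a case your sketch does not see), and none of them is automatic from ``the missing linear direction signals a tangency.'' Your proposal also glosses over the fact that the normal forms are \emph{not} decomposable unions of a cusp (or tacnode) with transverse lines --- the axes carry $t_m^3$ (resp.\ $t_m^2$) tails on the special branch, and a genuinely decomposable union would not be Gorenstein. Until the elimination and rigidity computation is actually carried out, the count ``exactly two'' is unestablished.
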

\begin{proof}
 We only need to find a basis for $\m/\m^2$, because a map of complete local rings that is surjective on cotangent spaces is surjective. From observation \eqref{obs:add} again, we find three possibilities for the vector $(d_1,d_2,d_3)$, $d_i=\dim_{\k}(\tR/R)_i$; $d_{\geq 4}=0$ in any case.
 
 \smallskip
 
 \textbf{Case} $(2,0,0)$. We see that $\tm^2\subseteq I$, so, if $(C,q)$ were Gorenstein, \eqref{obs:ses} would imply: \[m+1=\delta=\dim_\k(R/I)\leq \dim_\k(R/\tm^2)=\dim_\k A_0+\dim_\k A_1=1+(m-2)=m-1,\] a contradiction. Note: the singularity turns out to be decomposable in this case.
 
 \smallskip
 
 \textbf{Case} $(1,1,0)$. We have $\tm^3\subseteq I$. We are going to write down the $m-1$ generators of $A_1 \pmod {\tm^3}$. To express them in the simplest possible form, we perform at first only polynomial manipulations of the generators, while changing coordinates on the normalisation only at the end; the first step gives us the Zariski-local classification, which will be useful in the next section, while the second step completes the \'etale local (or formal) classification that we are interested in at the moment. Note that there is a short exact sequence:
 \[0\to{\tm^2\cap R}/{\m^2}\to{\m}/{\m^2}\to A_1\to 0\]
 
 The first generator, call it $x_1$, has a non-trivial linear term in at least one of the variables, say $t_1$. By scaling $x_1$ and possibly adding a multiple of $x_1^2$, we can make it into the form:
 $x_1=t_1\oplus p_{1,2}(t_2)\oplus\ldots\oplus p_{1,m}(t_m) \pmod{\tm^3}.$ Now we can use $x_1$ and $x_1^2$ to make sure the second generator does not involve $t_1$ at all. It will still have a linear term independent of $t_1$, say non-trivial in $t_2$. By scaling and adding a multiple of $x_2^2$, we can write $x_2=0\oplus t_2\oplus\ldots\oplus p_{2,m}(t_m) \pmod{\tm^3}.$ By taking a linear combination of $x_1$ with $x_2$ and $x_2^2$, we may now reduce $x_1$ to the form $t_1\oplus0\oplus p_{1,3}(t_3)\oplus\ldots\oplus p_{1,m}(t_m)\pmod{\tm^3}$. Continuing this way, by Gaussian elimination with the generators and their squares, we may write them as:
 \begin{align*}
  x_1= & t_1\oplus0\oplus\ldots\oplus\alpha_{1,m}t_m+\beta_{1,m}t_m^2\\
  x_2= & 0\oplus t_2\oplus\ldots\oplus\alpha_{2,m}t_m+\beta_{2,m}t_m^2\\
  &\ldots\\
  x_{m-1}= & 0\oplus\ldots\oplus t_{m-1}\oplus\alpha_{m-1,m}t_m+\beta_{m-1,m}t_m^2 \pmod{\tm^3}.
 \end{align*}
 If $x_i\in I$ for some $i$, then $t_i\in R$, and the singularity would be decomposable. So, by the Gorenstein condition, $R/I$ is generated by $1,x_1,\ldots,x_{m-1},$ and an extra element $y$. Hence $x_i^2\in I$ for all but at most one $i$.
 
 Suppose first that $x_i^2$ belongs to $I$ for all $i\in\{1,\ldots,m-1\}$. Note that $t_m^2$ cannot lie in $R$ since $\dim_{\k}(\tR/R)_2=1$. Then the conductor ideal is $I=\langle t_1^2,\ldots,t_{m-1}^2,t_m^3\rangle$, so $\dim_\k(\tR/I)=2m+1$, which contradicts the Gorenstein condition $\dim_\k(\tR/I)=2\delta$.
 
 Therefore, there is an $i$ such that $x_i^2\notin I$, say $i=1$. Then $t_i^2\in I\subseteq R$ for $i=2,\ldots,m-1$. If $\alpha_{i,m}\neq 0$ for some $i$ in this range, then $t_m^2\in R$ as well, so $t_1^2=x_1^2-O(t_m^2)\in R$, contradicting $\dim_\k(\tR/R)_2=1$. Therefore $\alpha_{i,m}=0$ for $i\in\{2,\ldots,m-1\}$. If $\alpha_{1,m}=0$, we would have $t_1^2=x_1^2-O(t_m^4)\in I\subseteq R$, so $x_1^2\in I$ as well, which is a contradiction. We are reduced to the following expression:
 \begin{align}\label{coordII-cs}
 \begin{split}
  x_1= & t_1\oplus0\oplus\ldots\oplus\alpha_{1,m}t_m+\beta_{1,m}t_m^2\\
  x_2= & 0\oplus t_2\oplus\ldots\oplus\beta_{2,m}t_m^2\\
  &\ldots\\
  x_{m-1}= & 0\oplus\ldots\oplus t_{m-1}\oplus \beta_{m-1,m}t_m^2 \pmod{\tm^3},
 \end{split}
 \end{align}
 with $\beta_{1,m}\in\k$ and $\alpha_{1,m},\beta_{i,m}\in\k^\times,\ i=2,\ldots,m-1$ (by indecomposability). Finally, we change coordinates in $t_m$ (abusing notation, $t_m:=\alpha_{1,m}t_m+\beta_{1,m}t_m^2$) and rescale the other $t_i$ to obtain:
 \begin{align}\label{coordII}
 \begin{split}
  x_1= & t_1\oplus0\oplus\ldots\oplus t_m\\
  x_2= & 0\oplus t_2\oplus\ldots\oplus t_m^2\\
  &\ldots\\
  x_{m-1}= & 0\oplus\ldots\oplus t_{m-1}\oplus t_m^2\pmod{\tm^3}.
 \end{split}
 \end{align}
 We check that $R/I=\langle 1,x_1,\ldots,x_{m-1},x_1^2\rangle$ and $\tR/R$ is of type $(1,1,0)$. In case $m=2$, we need an extra generator $y=t_2^3$. Equations are given by:
 \begin{itemize}
  \item $y(y-x_1^3)$ if $m=2$ ($A_5$-singularity or \emph{oscnode});
  \item $x_1x_2(x_2-x_1^2)$ if $m=3$ ($D_6$-singularity);
  \item $\langle x_3(x_1^2-x_2),x_i(x_j-x_k)\rangle_{1\leq i<j<k\leq m-1 \text{ or }1<j<k<i\leq m-1}$ if $m\geq 4$.
 \end{itemize}

 \smallskip
 
 \textbf{Case} $(1,0,1)$. We have $\tm^4\subseteq I$. By an argument similar to the above one, we write generators for $A_1$ as $x_i=\ldots\oplus t_i\oplus\ldots \oplus\alpha_{i,m}t_m+\beta_{i,m}t_m^2+\gamma_{i,m}t_m^3$, for $i=1,\ldots,m-1$.
 Then $R/I=\langle 1,x_1,\ldots,x_{m-1},y\rangle$. For all but at most one $i$, $x_i^2\in I$, but definitely $x_i^3\in I$ for all $i$. On the other hand $t_m^3\notin R$, because otherwise $t_i^3=x_i^3-\alpha_{i,m}^3t_m^3+O(t_m^4)$ would belong to $R$ as well, contradicting $\dim_\k(\tR/R)_3=1$. From this we deduce that $\alpha_{i,m}=0$ for all $i=1,\ldots,m-1$. Since $\dim_\k(\tR/R)_2=0$, there has to be another generator of $\m/\m^2$ of degree two in $t_m$, which we may write as $x_m=t_m^2+\gamma_{m,m}t_m^3$. We can use $x_m$ to remove all the $t_m^2$ pieces from $x_1,\ldots,x_{m-1}$, so we are reduced to the following expression:
  \begin{align}\label{coordIII-cs}
 \begin{split}
  x_1= & t_1\oplus0\oplus\ldots\oplus \gamma_{1,m}t_m^3\\
  x_2= & 0\oplus t_2\oplus\ldots\oplus \gamma_{2,m}t_m^3\\
  &\ldots\\
  x_{m-1}= & 0\oplus\ldots\oplus t_{m-1}\oplus \gamma_{m-1,m}t_m^3\\
  x_m= & 0\oplus\ldots\oplus t_m^2+\gamma_{m,m}t_m^3 \pmod{\tm^4},
  \end{split}
 \end{align}
 with $\gamma_{m,m}\in\k$ and $\gamma_{i,m}\in\k^\times,\ i=1,\ldots,m-1$ (by indecomposability). Finally, we change coordinates in $t_m$ (abusing notation $t_m:=t_m\sqrt{1+\gamma_{m,m}t_m}$)\footnote{For this to be possible, we have to assume that $\k$ has characteristic different from $2$.} and rescale the other $t_i$ to obtain:
 \begin{align}\label{coordIII}
 \begin{split}
  x_1= & t_1\oplus0\oplus\ldots\oplus t_m^3\\
  x_2= & 0\oplus t_2\oplus\ldots\oplus t_m^3\\
  &\ldots\\
  x_{m-1}= & 0\oplus\ldots\oplus t_{m-1}\oplus t_m^3\\
  x_m= & 0\oplus\ldots\oplus t_m^2 \pmod{\tm^4}.
  \end{split}
 \end{align}
 We check that $R/I=\langle 1,x_1,\ldots,x_{m-1},x_m\rangle$ and $\tR/R$ is of type $(1,0,1)$. Incidentally, when $m=1$, we recover the unique Gorenstein singularity of Lemma \ref{lem:unibranch}. Equations are given by:
 \begin{itemize}
  \item $x^5-y^2$ if $m=1$ ($A_4$-singularity or \emph{ramphoid cusp}, with $x=t^2,y=t^5$);
  \item $y(y^3-x^2)$ if $m=2$ ($D_5$-singularity, with $x=x_1,y=x_2$);
  \item $\langle x_3(x_1-x_2),x_3^3-x_1x_2\rangle$ if $m=3$;
  \item $\langle x_i(x_j-x_k), x_m(x_i-x_j),x_m^3-x_1x_2\rangle_{i,j,k\in\{1,\ldots,m-1\}\text{ all different}}$ if $m\geq 4$.
 \end{itemize}
\end{proof}


\begin{rem}
 We sketch an alternative proof of the above proposition based on meromorphic differentials, see also Corollary \ref{cor:dualising_line_bundle} below. We address the case $(1,0,1)$ and leave the (easier) case $(1,1,0)$ to the interested reader. The setup is as in \cite[\S 2.1]{RSPW2}: let $C$ be a projective Gorenstein curve with a unique singularity of genus two at the point $q$. Let:
 \[ \widetilde C\xrightarrow{\nu}\widehat C\xrightarrow{\mu} C\]
 be respectively the normalisation and semi-normalisation of $C$. We have inclusions:
 \begin{align*}
  {} & {} & \OO_C & \subseteq & \mu_* \OO_{\widehat C} & \subseteq & \mu_*\nu_* \OO_{\widetilde C} & \subseteq  & K \\
  J  & \supseteq & \omega_C & \supseteq & \mu_* \omega_{\widehat C} & \supseteq & \mu_*\nu_* \omega_{\widetilde C} & {}
 \end{align*}
where $K$ is the sheaf of rational functions, and $J$ the sheaf of meromorphic differentials. The rows are dual to each other with respect to the residue pairing $J\otimes K\to\k$ \cite[Proposition 1.16(ii)]{AK}. The skyscraper sheaf $\omega_{\widehat C}/\nu_* \omega_{\widetilde C}$ is generated by the logarithmic differentials:
\begin{equation}\label{eqn:logdifferentials}\frac{\operatorname{d} t_i}{t_i}-\frac{\operatorname{d} t_j}{t_j},\quad i,j\in\{1,\ldots,m\}.\end{equation}
The skyscraper sheaves $\mu_*\OO_{\widehat C}/\OO_C$ and $\omega_C/\mu_*\omega_{\widehat C}$ have length two. Let $\eta_1$ be a generator of the latter; since $C$ is Gorenstein, we may assume that $\eta_1$ is a local generator of $\omega_C$. Since $\tm^4\subseteq R$ and by Equation \eqref{eqn:logdifferentials}, we may assume that $\eta_1$ takes the following form:
\begin{equation}\label{eqn:eta1}\eta_1=\zeta\frac{\operatorname{d} t_1}{t_1}+\sum_{i=1}^m \alpha_i \frac{\operatorname{d} t_i}{t_i^2}+ \sum_{i=1}^m \beta_i \frac{\operatorname{d} t_i}{t_i^3}+ \sum_{i=1}^m \gamma_i \frac{\operatorname{d} t_i}{t_i^4}.\end{equation}
Since the constant functions descend to $C$, the residue condition implies that $\zeta=0$. 

Not all the $\gamma_i$ can be zero, otherwise we would have $\tm^3\subseteq R$, so say $\gamma_m\neq 0$; this implies that $t_m^3\notin R$ (and in particular $t_m\notin R$). Up to scaling we have $\gamma_m=1$.

Since $A_2=0$, there is a linear combination:
\[ q=t_m^2+yt_m^3\in R.\]
Pairing with $\eta_1$ we find $y=-\beta_m$.

Since $A_1$ has dimension one, for every $i=1,\ldots,m-1$ there is a linear combination:
\[ l_i=t_i+x_it_m+y_it_m^2+z_it_m^3\in R.\]
Subtracting a scalar multiple of $q$, we may assume that $y_i=0$ for all $i$. If $x_i$ were not zero for any $i$, we would have an element $ql_i=x_it_m^3+O(t_m^4)$ in $R$, against the assumption that $t_m^3\notin R$; hence $x_i=0$ for all $i$ as well. Pairing with $\eta_1$ we find $z_i=-\alpha_i$. So $\alpha_i\neq0$ for $i=1,\ldots,m-1$, otherwise $C$ would be decomposable. 

\noindent Taking $l_i^2$, we have $t_i^2\in R, i=1,\ldots,m-1,$ as well. Pairing with $\eta_1$, we find $\beta_i=0$.

\noindent Taking $l_i^3$, we have $t_i^3\in R, i=1,\ldots,m-1,$ as well. Pairing with $\eta_1$, we find $\gamma_i=0$.

Up to elements of $\mu_*\nu_*\omega_{\widetilde C}$, we may therefore write:
\[\eta_2:= q\eta_1=\frac{\operatorname{d} t_m}{t_m^2}.\]

Finally, subtracting a multiple of $\eta_2$, Equation \eqref{eqn:eta1} becomes:
\[\eta_1=\sum_{i=1}^{m-1} \alpha_i \frac{\operatorname{d} t_i}{t_i^2}+ \beta_m \frac{\operatorname{d} t_m}{t_m^3}+  \frac{\operatorname{d} t_m}{t_m^4}, \text{ with } \alpha_i\in\k^\times,\beta_m\in\k.\]

\end{rem}
\begin{definition}\label{def:special_branches}
 In case $(1,0,1)$, we say the singularity is \emph{of type $I$}, and the branch parameterised by $t_m$ is called \emph{singular}; in case $(1,1,0)$, we say the singularity is \emph{of type $I\!I$}, and the branches parameterised by $t_1$ and $t_m$ are called \emph{twin}. We shall refer to the singular or twin branches as \emph{special} or \emph{distinguished}; all other branches are \emph{axes}. \emph{Branch} remains a generic name, indicating any of the previous ones.
\end{definition}

We gather a description of the dualising line bundle in the following:
\begin{cor}\label{cor:dualising_line_bundle}
 Let $\nu\colon C\to \bar C$ be the normalisation of a Gorenstein singularity of genus two, with $\nu^{-1}(q)=\{q_1,\ldots,q_m\}$.
 \begin{enumerate}[label=(\Roman*)]
 
 \item With local parametrisation as in \eqref{coordIII}, $\omega_{\bar C}$ is generated by: \[\frac{\operatorname{d}t_1}{t_1^2}+\ldots+\frac{\operatorname{d}t_{m-1}}{t_{m-1}^2}-\frac{\operatorname{d}t_m}{t_m^4},\]
 and $\nu^*\omega_{\bar C}=\omega_C(2q_1+\ldots+2q_{m-1}+4q_m)$.

 \item With local parametrisation as in \eqref{coordII}, $\omega_{\bar C}$ is generated by:
  \[\frac{\operatorname{d}t_1}{t_1^3}+\frac{\operatorname{d}t_2}{t_2^2}+\ldots+\frac{\operatorname{d}t_{m-1}}{t_{m-1}^2}-\frac{\operatorname{d}t_m}{t_m^3},\]  
  and $\nu^*\omega_{\bar C}=\omega_C(3q_1+2q_2+\ldots+2q_{m-1}+3q_m)$.
                                                                                                                                  \end{enumerate}
\end{cor}

\section{Tangent sheaf and automorphisms}\label{sec:crimp}
In this section we analyse the tangent sheaf of a genus two singularity. For a complete pointed Gorenstein curve of genus two, we translate the absence of infinitesimal automorphisms into a (mostly) combinatorial criterion. We will use this in Section \ref{sec:stability}, when we define stability conditions on the stack of pointed Gorenstein curves of genus two, to make sure that the resulting substacks are Deligne-Mumford.

In the first lemma, we find conditions for a vector field on the normalisation, vanishing at the preimage of the singular point, to descend to the singular curve. We do so by an explicit computation in the Zariski-local coordinates of the previous section.

\begin{lem}\label{lem:aut}
Let $(C,q)$ be a Gorenstein curve singularity of genus two, with pointed normalisation $\nu\colon(\tilde C,\{q_i\}_{i=1,\ldots,m})\to (C,q)$, and assume $\operatorname{char}(\k)\neq 2,3$. Then $\Omega_C^\vee$ in $\nu_*\Omega^\vee_{\tilde C}\otimes K(\tilde C)$ satisfies:
\begin{itemize}
 \item it is contained in $\Omega_{-1}:=\nu_*\Omega^\vee_{\tilde C}(-\sum q_i)$;
 \item it contains $\Omega_{-3}:=\nu_*\Omega^\vee_{\tilde C}(-\sum3q_i)$;
 \item and its image in $\Omega_{-1}/\Omega_{-3}=\bigoplus \nu_*\Omega^\vee_{\tilde C}(-q_i)_{|2q_i}$ is a half-dimensional subspace, of which we give explicit equations in local coordinates.
\end{itemize}
\end{lem}
\begin{proof}
 Let $K(\tilde C)$ denote the locally constant sheaf of rational functions on $\tilde C$. A section of $\Omega_{\tilde C}^\vee\otimes K(\tilde C)$ is contained in $\Omega_C^\vee$ if and only if its image under the push-forward map:
 \[\nu_*\colon\nu_*\hhom(\Omega_{\tilde C},K(\tilde C))\to \hhom(\Omega_C,K(\tilde C)) \]
 lies in the subspace $\hhom(\Omega_C,\OO_C)$. Since in any case $\widetilde{\mathfrak m}^4\subseteq\mathfrak m$ (see the proof of Proposition \ref{prop:classification}), vector fields vanishing up to order three certainly descend. In order to justify the remaining claims, we may work locally around the singular point in the coordinates of Section \ref{sec:sing}.
\begin{description}[leftmargin=0pt]
 \item[$(A_4):$] In the coordinates $x=t^2+ct^3,y=t^4,z=t^5$ (they are redundant, but this will be irrelevant), the section $f(t)\frac{d}{dt}\in\nu_*\Omega_{\tilde C}^\vee\otimes K(\tilde C)$ pushes forward to
 \[\nu_*\left(f(t)\frac{d}{dt}\right)=(2t+3ct^2)f(t)\frac{d}{dx}+4t^3f(t)\frac{d}{dy}+5t^4f(t)\frac{d}{dz},\]
 from which, writing $f(t)=f_0+f_1t+f_2t^2+O(t^3)$, we see that
 \[(2t+3ct^2)f(t),4t^3f(t),5t^4f(t)\in\hat\OO_{C,p}\Leftrightarrow f_0=0,cf_1+2f_2=0.\]
 
 \item[$(A_5):$] In the coordinates $x=t_1\oplus at_2+bt_2^2,y=t_1^3$ (we have $a\neq 0$), the section $f_1(t_1)\frac{d}{dt_1}\oplus f_2(t_2)\frac{d}{dt_2}$ pushes forward to
 \[\nu_*\left(f_1(t_1)\frac{d}{dt_1}\oplus f_2(t_2)\frac{d}{dt_2}\right)=\left(f_1(t_1)\oplus (a+2bt_2) f_2(t_2)\right)\frac{d}{dx}+3t_1^2f_1(t_1)\frac{d}{dy},\]
 from which, writing $f_i(t_i)=f_{i0}+f_{i1}t_i+f_{i2}t_i^2+O(t_i^3),i=1,2$, we see that
 \[f_1(t_1)\oplus (a+2bt_2)f_2(t_2),3t_1^2f_1(t_1)\in\hat\OO_{C,p} \Leftrightarrow \begin{cases} f_{10}=f_{20}=0,\\ f_{11}=f_{21},\\ 2bf_{21}+af_{22}=a^2f_{12}.\end{cases}\]
 
  \item[$(I_{m\geq 2}):$] In the coordinates of \eqref{coordIII-cs},
 \[\nu_*\left(\sum_{i=1}^m f_i(t_i)\frac{d}{dt_i}\right)=\sum_{i=1}^{m-1}\left(f_i(t_i)\oplus3\gamma_{i,m}t_m^2f_m(t_m)\right)\frac{d}{dx_i}+\\(2t_m+3\gamma_{m,m}t_m^2)f_m(t_m)\frac{d}{dx_m},\]
 hence we deduce that
 \[\nu_*\left(\sum_{i=1}^m f_i(t_i)\frac{d}{dt_i}\right)\in\Omega_C^\vee\otimes\hat\OO_{C,p}\Leftrightarrow \begin{cases} f_{i0}=0 & i=1,\ldots,m,\\ f_{i1}=3f_{m1}, & i=1,\ldots,m-1,\\ 3\gamma_{m,m}f_{m1}+2f_{m2}=0.\end{cases}\]
 
 \item[$(I\!I_{m\geq 3}):$] In the coordinates of \eqref{coordII-cs},
 \begin{multline*}\nu_*\left(\sum_{i=1}^m f_i(t_i)\frac{d}{dt_i}\right)=\left(f_1(t_1)\oplus(\alpha_{1,m}+2\beta_{1,m}t_m) f_m(t_m)\right)\frac{d}{dx_1}+\\
 \sum_{i=2}^m\left(f_i(t_i)\oplus2\beta_{i,m}t_mf_m(t_m)\right)\frac{d}{dx_i},\end{multline*}
 hence we deduce that
 \[\nu_*\left(\sum_{i=1}^m f_i(t_i)\frac{d}{dt_i}\right)\in\Omega_C^\vee\otimes\hat\OO_{C,p}\Leftrightarrow \begin{cases} f_{i0}=0 & i=1,\ldots,m,\\ 2f_{11}=f_{i1}=2f_{m1}, &  i=2,\ldots,m-1,\\ \beta_{1,m}f_{m1}+\alpha_{1,m}f_{m2}=\alpha_{1,m}^2f_{12}.\end{cases}\]
 
\end{description}
\end{proof}

As far as proper curves are concerned, there is an important distinction to make when all the branches of the genus two singularity are rational and $1$-marked, as can be seen from the appearance of the parameters $\beta_{1,m}$ and $\gamma_{m,m}$ in the previous lemma.

\begin{definition}\label{def:atom}
 The \emph{atom} of type $I_m$ (the name is borrowed from \cite{AFS1}) is obtained by gluing the subalgebra of $\k[t_1]\oplus\ldots\oplus\k[t_m]$ generated by $x_1,\ldots,x_m$ as in \eqref{coordIII} with $m$ copies of $(\k[s],(s))$ under the identification $s_i=t_i^{-1}$.  Consider it as an $m$-marked curve by marking the points with $s_i=0$ for $i=1,\ldots,m$. The multiplicative group $\Gm$ acts on the  atom by $\lambda.t_i=\lambda^3 t_i$ for $i=1,\ldots,m-1$ and $\lambda.t_i=\lambda t_i$ for $i=m$.
 
 Similarly, the atom of type $I\!I_m$ is obtained by gluing the subalgebra of $\k[t_1]\oplus\ldots\oplus\k[t_m]$ generated by $x_1,\ldots,x_{m-1}$ (and $y$) as in \eqref{coordII} (and following lines) with $m$ copies of $(\k[s],(s))$ under the identification $s_i=t_i^{-1}$. Consider it as an $m$-marked curve by marking the points with $s_i=0$ for $i=1,\ldots,m$. There is a $\Gm$-action on the type $I\!I$ atom by $\lambda.t_i=\lambda t_i$ for $i=1,m$ and $\lambda.t_i=\lambda^2 t_i$ for $i=2,\ldots,m-1$.
 
 A \emph{non-atom} of type $I$ (resp. $I\!I$) is the proper $m$-marked curve of genus two obtained by the same procedure as above when starting from an algebra of the form \eqref{coordIII-cs} with $\gamma_{m,m}\neq 0$ (resp. \eqref{coordII-cs} with $\beta_{1,m}\neq 0$). The $\Gm^m$ action on the pointed normalisation implies that the non-atoms of type $I$ (resp. $I\!I$) are all isomorphic to one another, independently of the choice of $\gamma_{i,m}\in\k^\times$ (resp. $\alpha_{1,m},\beta_{i,m}\in\k^\times$), $i=1,\ldots,m$.
\end{definition}

Finally, we describe explicit conditions for a reduced, proper, Gorenstein curve of genus two to have a finite automorphism group. These conditions are entirely combinatorial as long as there is no subcurve with a type $I_m$ (resp. $I\!I_m$) singularity and exactly $m$ special points. 

Recall Smyth's description of genus one curves with no infinitesimal automorphisms \cite[Proposition 2.3, Corollary 2.4]{SMY1}.

\begin{definition}
 Let $(C,p_1,\ldots,p_n)$ be a reduced pointed curve. A connected subcurve $D\subseteq C$ is said to be \emph{nodally attached} if $D\cap\overline{C\setminus D}$ consists of nodes only.
  We say that $C$ is \emph{residually DM} (rDM) if every nodal and nodally attached subcurve $D$ of $C$, marked by $\{p_i\in D\}\cup (D\cap\overline{C\setminus D}$), is Deligne-Mumford stable. As usual, by \emph{special points} we mean markings and nodes.
\end{definition}

\begin{cor}\label{cor:explicitnoaut}
 Let $(C,p_1\ldots,p_n)$ be a Gorenstein pointed curve of arithmetic genus two over a field of characteristic $\neq2,3$. $H^0(C,\Omega_C^\vee(-\sum_{i=1}^n p_i))=0$ is equivalent to either of the following:
 \begin{enumerate}[leftmargin=.6cm]
  \item $C$ has a singularity of type $I_{m\geq 1}$: either all branches contain exactly one special point and $C$ is the non-atom; or each of its axes contains at least one special point, and at least one branch has at least two. Furthermore $C$ is rDM.
  \item $C$ has a singularity of type $I\!I_{m\geq 2}$: either all branches contain exactly one special point and $C$ is the non-atom; or at least one of its twin branches contains a special point, each of its axes contains at least one, and at least one branch has at least two. Furthermore $C$ is rDM.
  \item $C$ has two elliptic $m$-fold points: each of their branches contains at least one special point or is shared, and at least one branch for each singular point contains at least one extra special point. Furthermore $C$ is rDM.
  \item $C$ has one elliptic $m$-fold point: either one of its branches is a genus one curve, and every other branch contains at least one special point; or all branches contain at least one special point, and either two of its branches coincide, or at least one branch has at least two special points. Furthermore $C$ is rDM.
  \item $C$ contains only nodes and is Deligne-Mumford stable.
 \end{enumerate}
\end{cor}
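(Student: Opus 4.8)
The plan is to compute the space of infinitesimal automorphisms $H^0(C,\Omega_C^\vee(-\sum_i p_i))$ by pulling back to the normalisation and reading off the compatibility conditions at the singular points. Write $\nu\colon\tilde C\to C$ for the normalisation at \emph{all} singularities and decompose $\tilde C=\bigsqcup_j\tilde C_j$ into its connected (smooth) components. A vector field on $C$ vanishing at the markings restricts to a tuple $(v_j)$, and I claim this correspondence identifies $H^0(C,\Omega_C^\vee(-\sum_i p_i))$ with the tuples such that: every $v_j$ vanishes at each marking and at each preimage of a node; at the preimages $q_i$ of a genus-two point the second-order jets of the $v_j$ lie in the image of the map $\phi$ of Lemma \ref{lem:aut}, i.e.\ they satisfy the explicit equations listed there; and at the preimages of an elliptic $l$-fold point the analogous conditions of Smyth \cite[Proposition 2.3]{SMY1} hold. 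The node clause is just the computation that a field must vanish on both branches of a node; the remaining clauses are the content of the local analyses.

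I would then reduce the global problem to a component-wise count. Since $C$ has arithmetic genus two with the genus concentrated at the singularities, each $\tilde C_j$ is rational, or a smooth curve of positive genus (the latter only in cases (4)--(5)). A component of genus $\geq 2$ carries no vector fields; a genus-one component has a one-dimensional $H^0(\Omega^\vee)$ that dies once a single special point is imposed, which is exactly why the elliptic pieces in (3)--(4) must each carry a special point; and on a rational component $H^0(\Omega_{\PP^1}^\vee)\cong\mathfrak{sl}_2$ is three-dimensional, dropping to dimension $\max(0,3-k)$ after vanishing is imposed at $k$ points. The \emph{rDM} hypothesis is precisely the requirement that every nodal, nodally attached subcurve be Deligne--Mumford stable, so no field is supported on such a piece; as fields vanish at separating nodes, these contributions split off and reduce each branch of a genus-two point to the single rational component meeting $q$, carrying the jet data at $q_i$.

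The crux is cases (1)--(2), where I feed this count into the matching equations of Lemma \ref{lem:aut}. The vanishing $f_{i0}=0$ is forced at $q_i$ by the singularity, so a branch with exactly one further special point has its field determined up to a scalar $f_{i1}$ (with $f_{i2}$ a fixed multiple of $f_{i1}$), a branch with two further special points has its field killed, and a branch with \emph{no} further special point retains a free coefficient $f_{i2}$. The first-order conditions tie all the $f_{i1}$ to a single scaling parameter $\lambda$. One then checks that requiring every axis to carry a special point removes the spurious $f_{i2}$-freedom on the axes (which the singularity leaves unconstrained), a branch with two special points forces $\lambda=0$, and the residual second-order freedom among the \emph{distinguished} branches is what the last equation of Lemma \ref{lem:aut} controls. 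This is exactly the atom/non-atom dichotomy and the source of the asymmetry between the types: for type $I$ the single singular branch has $f_{m2}$ pinned to $\lambda$ through the coefficient $\gamma_{m,m}$, whereas for type $I\!I$ the equation leaves a one-dimensional residual freedom shared by the two twin branches, which is why one of the twins must additionally carry a special point. When every branch carries exactly one special point, which by an automorphism fixing $q_i$ we place at $t_i=\infty$, only $\lambda$ survives and the last equation reduces to $\lambda\,\gamma_{m,m}=0$ (type $I$), respectively $\lambda\,\beta_{1,m}=0$ (type $I\!I$): a nonzero coupling coefficient kills $\lambda$, giving $H^0=0$ and the non-atom, while a vanishing one leaves the $\Gm$ of the atom, in agreement with $\mathcal N_T\cong[\Aaff^1/\Gm]$.

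Cases (3)--(4) follow the same template with the elliptic $l$-fold point conditions of \cite[Proposition 2.3, Corollary 2.4]{SMY1} replacing Lemma \ref{lem:aut}, and case (5) is the classical Deligne--Mumford criterion. The main obstacle I expect is bookkeeping: organising the first- and second-order matching computation uniformly in $m$ and across both types, and translating the surviving jet-freedoms into the precise combinatorial clauses (which axes, twins, and singular branches must carry how many special points), so as to land exactly on the stated conditions and on the geometric atom/non-atom distinction.
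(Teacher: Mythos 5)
Your proposal is correct and follows essentially the route the paper itself indicates: the paper states this corollary without a separate proof, deriving it from the local jet conditions of Lemma \ref{lem:aut} (twisted by the ideal of the markings and globalised over the normalisation), Smyth's criterion at elliptic $l$-fold points, and the atom/non-atom dichotomy governed by the coefficients $\gamma_{m,m}$ and $\beta_{1,m}$, exactly as you organise it. Your dimension count on rational branches and your explanation of the type $I$ versus type $I\!I$ asymmetry (the singular branch's $f_{m2}$ is always pinned by the last equation, whereas the twins share a one-dimensional residual freedom) match the paper's intended argument.
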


\begin{definition}\label{def:dangling}
 A curve with a singularity of type $I\!I$ such that one of the special branches contains no special points is called \emph{dangling} (see \cite[\S 2.1]{AFSGm}).
\end{definition}

\section{Admissible covers and semistable tails}\label{sec:sstails}

Given a family of prestable (pointed) curves of genus two over the spectrum of a discrete valuation ring $\mathcal C\to\dvr$, with smooth generic fibre $\mathcal C_{\eta}$ and regular total space, we classify the subcurves of the central fibre $\mathcal C_{0}$ that can be contracted to yield a Gorenstein singularity of genus two. 

In the genus one case, Smyth answered the analogous question by identifying the class of \emph{balanced} subcurves \cite[Definition 2.11]{SMY1}: subcurves of arithmetic genus one, such that, when breaking them into a \emph{core} (minimal subcurve of genus one, not containing any separating node) and a number of rational trees (with root corresponding to the component adjacent to the core, and leaves corresponding to the components adjacent to the portion of $\mathcal C_0$ that is not contracted), the distance between any leaf and the root of any such tree is constant, not depending on the tree either.

In the case at hand, the answer turns out to be slightly more complicated: first, the special branch(es) of a type $I$ (resp. $I\!I$) singularity are connected through rational chains to a Weierstrass (resp. two conjugate) point(s) of the core. Second, the lengths of the rational trees may vary according to where their attaching points lie, but the special chains are always the shortest, and, together with the configuration of the attaching points on the core, they determine the length of any other chain.

\subsection{A quick recap on admissible covers}\label{rmk:Wandconj}
 While there are no special points on a smooth curve of genus zero or one, the simplest instance of Brill-Noether theory involves smooth curves of genus two. Every such $C$ is \emph{hyperelliptic}: it admits a unique (up to reparametrisation) two-fold cover $\phi\colon C\to\PP^1$, induced by the complete canonical linear system, i.e. $\lvert K_C\rvert$ is the unique $\mathfrak g^1_2$ on $C$; said otherwise, there is a unique element $\sigma\in\Aut(C)$, called the \emph{hyperelliptic involution}, such that $C/\langle\sigma\rangle\simeq\PP^1$. A point $x\in C$ is called \emph{Weierstrass} if it is a ramification point for $\phi$ (or, equivalently, a fixed point for $\sigma$); from the Riemann-Hurwitz formula it follows that there are six Weierstrass points on every smooth curve of genus two. Two points $x_1,x_2$ are said to be conjugate (write $x_2=\bar x_1$) if there exists a point $z\in\PP^1$ such that $\phi^{-1}(z)=\{x_1,x_2\}$ (or, equivalently, $\sigma(x_1)=x_2$). These notions may be extended to nodal curves by declaring $(C,x)$ to be Weierstrass if its stabilisation lies in the closure of
 \[\mathcal W=\{(C,x)|\ C\text{ smooth and } x \text{ Weierstrass}\}\subseteq\oM_{2,1},\]
 and similarly for conjugate points. We then need to study the limiting behaviour of Weierstrass points when a smooth curve degenerates to a nodal one. This is a difficult problem when it comes to higher genus curves; it has received considerable attention since the '70s, in work of E. Arbarello, D. Eisenbud, J. Harris, and many others. In our case it boils down to understanding admissible covers \cite{HarrisMumford} of degree two with a branch divisor of degree six.

\begin{definition}\label{def:admissiblecover}
 A family of \emph{pointed hyperelliptic admissible covers} over $S$ is a finite morphism $\psi\colon (C,D_R,p_1,\bar p_1,\ldots,p_n,\bar p_n)\to (T,D_B,\psi(p_1),\ldots,\psi(p_n))$ over $S$ such that:
 \begin{enumerate}
  \item $(C,D_R,\mathbf{p},\bar{\mathbf{p}})$ and $(T,D_B,\psi(\mathbf{p}))$ are prestable curves, with unordered smooth disjoint multisections $D_R$ and $D_B$ of length $2g+2$, and ordered smooth disjoint multisections $\mathbf{p},\bar{\mathbf{p}},\psi(\mathbf{p})$;
  \item $C$ has arithmetic genus $g$, and $(T,D_B,\psi(\mathbf{p}))$ is a stable rational tree;
  \item $\psi$ is a double cover on an open $U\subseteq T$ dense over $S$;
  \item\label{point:logetale} $\psi$ is \'etale on $C^\text{sm}\setminus D_R$, and $\psi(p_i)=\psi(\bar p_i)$ for $i=1,\ldots,n$;
  \item $\psi$ maps $D_R$ to $D_B$ with simple ramification, and it maps nodes of $C$ to nodes of $T$ so that in local coordinates:
  \[ \psi^\#\colon \OO_S[u,v]/(uv-s)\to\OO_S[x,y]/(xy-t)\]
  maps $u\mapsto x^i,v\mapsto y^i, s\mapsto t^i$ for $i=1$ or $2$.
 \end{enumerate}
\end{definition}

 \begin{thm}[{\cite{HarrisMumford,Mochizuki}}]
  The moduli stack of pointed hyperelliptic admissible covers $\overline{\mathcal H}_{g,n}$ is a proper and smooth Deligne-Mumford stack with normal crossing boundary and forgetful morphisms $s\colon \overline{\mathcal H}_{g,n}\to\oM_{g,n}$ and  $t\colon \overline{\mathcal H}_{g,n}\to\oM_{0,2g+2|n}$.
 \end{thm}

 $\overline{\mathcal H}_{g,n}$ provides a nice compactification of the locus of hyperelliptic curves in $\oM_{g,n}$. Besides the original sources,  we have benefited from the exposition in \cite[Appendix 2]{Diaz}, \cite[Proposition (3.0.6)]{Cukierman}, and \cite[Theorem 5.45]{HM}.
 
 We extract some information on the Weierstrass and conjugate loci in genus two: up to the involution action, the Weierstrass locus is isomorphic to $\oM_{0,6}/\mathfrak S_5$, and the conjugate locus is isomorphic to  $\oM_{0,7}/\mathfrak S_6$. We remark that $(C,x)$ being Weierstrass is an intrinsic notion if $C$ is of compact type, but it may depend on the smoothing otherwise. Indeed, if we let $\overline{\mathcal W}_{2,n}$ (resp. $\overline{\mathcal K}_{2,n}$) denote the space of hyperelliptic admissible covers of genus two with a marked Weierstrass (resp. two conjugate) point(s) and $n$ further markings, forgetting the Weierstrass (resp. one of the two conjugate) point(s) is \emph{not} a finite map to $\oM_{2,n}$ (resp. $\oM_{2,n+1}$) - see below the case when $x$ belongs to a rational bridge. We spell out an explicit description of the image of $\overline{\mathcal W}_{2,n}$ and $\overline{\mathcal K}_{2,n}$.
 \begin{itemize}[leftmargin=.5cm]
  \item If $x$ belongs to a component of genus one $E$, which is attached to another component of genus one at a node $y$, then $x$ is Weierstrass iff $2x\sim 2y\in\Pic(E)$; if instead $E$ has a self-node that glues $y_1$ with $y_2$, then $x$ is Weierstrass iff $2x\sim y_1+y_2\in\Pic(E)$.
  
  If $x$ is on a rational component $R$, $x$ is Weierstrass if either $R$ is attached to a genus one curve at two distinct points; or $R$ has a self-node gluing $y_1$ and $y_2$ and is attached to a genus one tail at $y_3$, in which case we require $\phi(y_1)=\phi(y_2)$ for a double cover $\phi\colon R\to\PP^1$ ramified at $x$ and $y_3$; or $R$ has two self-nodes gluing $y_1$ with $y_2$, and $y_3$ with $y_4$, in which case we require $x$ to be a ramification point for a double cover $\phi\colon R\to\PP^1$ such that $\phi(y_1)=\phi(y_2)$ and $\phi(y_3)=\phi(y_4)$ - geometrically, if we embed $\PP^1$ as a conic $C\subseteq\PP^2$, the line through $x$ and $\overline{y_1y_2}\cap\overline{y_3y_4}$ should be tangent to $C$ at $x$. See Figure \ref{fig:adm_W}.
  \begin{center}
  \begin{figure}[!ht]
  \includegraphics[width=\textwidth]{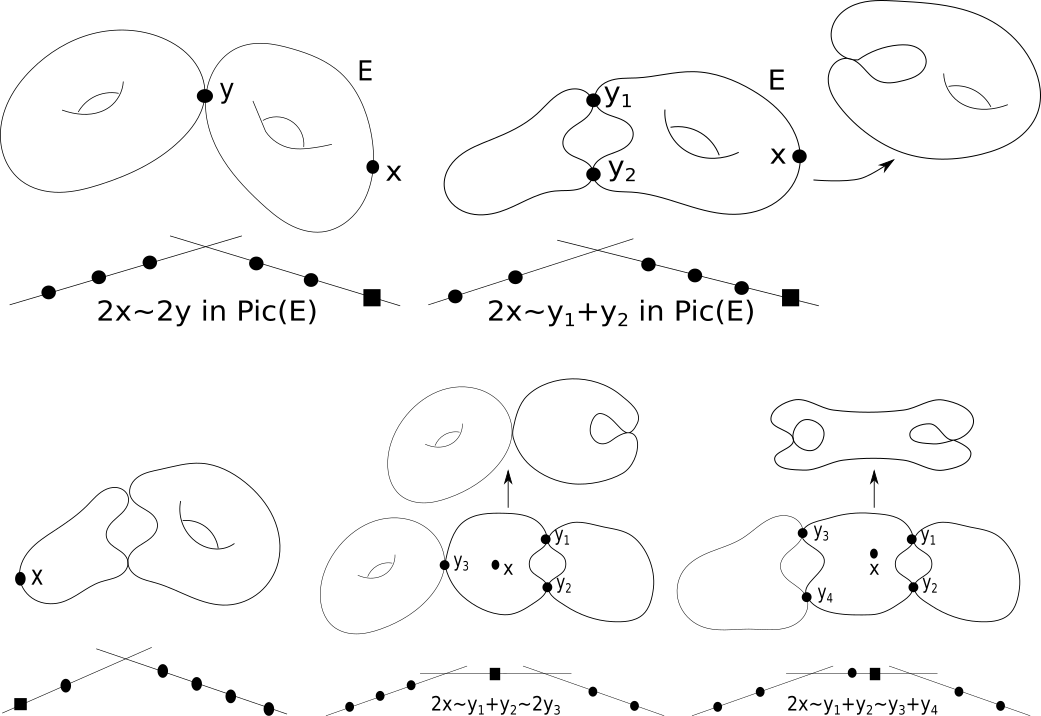}
  \caption{Admissible covers and Weierstrass points.}\label{fig:adm_W}
  \end{figure}
  \end{center}
 \item If $x_1$ and $x_2$ are conjugate, they have to map to the same component of the target of the admissible cover. We may adapt the description of the previous point by replacing every condition on $2x$ by its analogue for $x_1+x_2$ (in fact, attaching a rational component with two extra markings to a Weierstrass point always produces an element of $\overline{\mathcal K}_{2,n}$, so knowing the latter determines $\overline{\mathcal W}_{2,n}$).  There are a few more situations to take into account: 
 $x_1$ and $x_2$ could belong to a rational component $R$ bridging between two distinct curves of genus one; or $x_1$ and $x_2$ could lie on two distinct rational components $R_1$ and $R_2$ intersecting each other at one node and meeting a curve of genus one in two distinct points (\dag); or $R_1$ and $R_2$ intersecting each other in three points. See Figure \ref{fig:adm_conj}.
  \begin{figure}[!ht]
 \includegraphics[width=\textwidth]{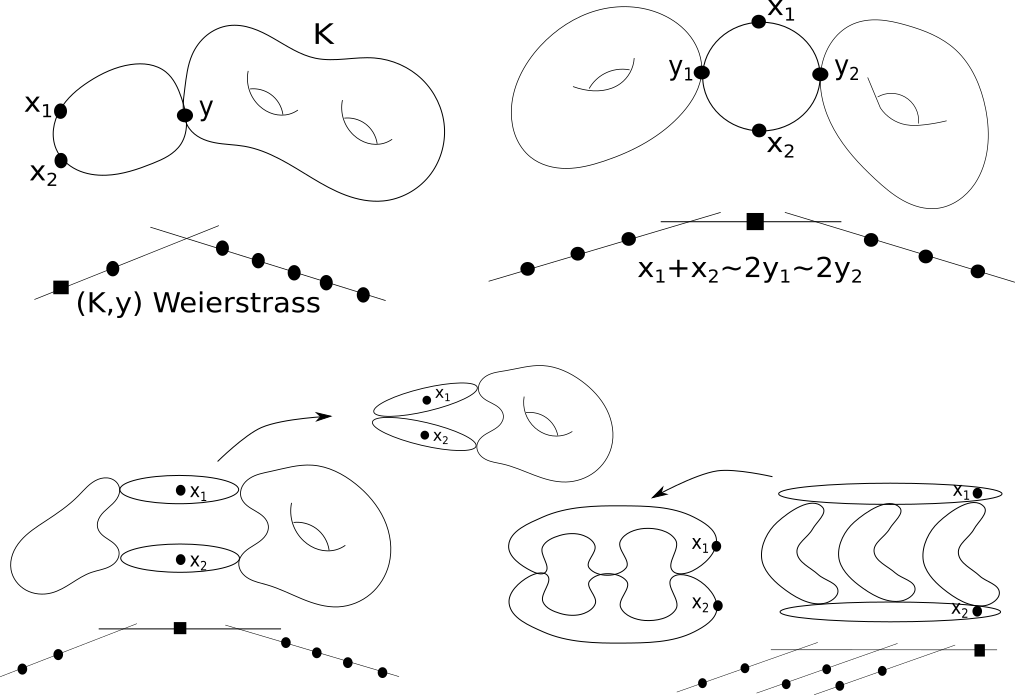}  
 \caption{Admissible covers and conjugate points.}\label{fig:adm_conj}
  \end{figure}
 \end{itemize}
 
 \begin{rem}\label{rmk:necklace_symmetry}
  In case (\dag), the singularity of the total space of a smoothing $\mathcal C\to\dvr$ at the two distinguished nodes (separating the elliptic component from the rational chain) are both $A_k$ for the same $k$, because they map to the same node of the target in the admissible cover. This consideration is stable under base change, and it therefore entails a symmetry of the rational chain in the model with regular total space.
 \end{rem}
 
 \subsection{A quick recap on logarithmically smooth curves}\label{sec:recaplog}
 Logarithmically smooth curves are prestable curves endowed with a suitable logarithmic structure \cite{KatoF}. There is a minimal \cite{Gillam} such logarithmic structure $(C, M_C^\text{min})\to(S, M_S^\text{min})$, determining all the others by pullback: this is the logarithmic structure on the moduli stack of prestable curves $\mathfrak M_{g,n}$ induced by its normal crossing boundary. More explicitly, $ M_S^\text{min}$ is a locally free logarithmic structure, with generators of the characteristic sheaf $\overline M_S^\text{min}$ corresponding to nodes of the curve. The tropicalization $\tropC$ of a logarithmically smooth curve $(C, M_C)$ over a geometric point $(S=\operatorname{Spec}(\bar\k),M_S)$ consists of its dual graph $\Gamma(C)$ metrised in $\overline M_S$ - the length of an edge is its smoothing parameter. In particular, a family of prestable curve over a trait $\dvr$ gives rise to a standard tropical curve (metrised in $\mathbb N$). After \cite{GS} and \cite{CCUW}, piecewise-linear (PL) functions on $\tropC$ with values in $\overline M_S$ correspond to sections of $\Gamma(C,\overline M_C)$. The latter determine $\OO_C^*$-torsors (and therefore line bundles) on $C$ by the short exact sequence:
 \[0\to\OO_C^*\to M_C^\text{gp}\to \overline M_C^\text{gp}\to 0.\]
 A detailed analysis of this correspondence can be found in \cite[Proposition 2.4.1]{RSPW1}. See also \cite[p.9]{Bozlee} for a description in local charts.
 
 The moduli space of pointed hyperelliptic admissible covers $\overline{\mathcal H}_{g,n}$ is also logarithmically smooth with locally free logarithmic structure induced by the normal crossing boundary \cite{Mochizuki}. Generators of $\overline M_{\bar{\mathcal H}}^\text{min}$ correspond to the nodes of the source curve $C$, with two of them being identified if the cover is a local isomorphism around them. This allows us to use the language of tropical geometry, see for instance \cite{CMR}. In particular, the tropicalisation of an admissible cover is a \emph{harmonic} map $\psi\colon \tropC\to\tropT$ satisfying the \emph{local Riemann-Hurwitz condition}. This means that every edge of $\tropT$ has either two preimages, or one with \emph{expansion factor} $2$; and that for every vertex $v$ of $\tropC$ the genus of the corresponding irreducible component satisfies: \[2g(v)-2=-4+R\]
 where $R$ denotes the number of edges of expansion factor $2$ and legs corresponding to the branch divisor (we call them $B$-legs) adjacent to $\psi(v)$.
 
 \subsection{Minimal curves}
 \begin{definition}
  A projective Gorenstein curve $C$ is \emph{minimal} if it contains no node $x$ such that the normalisation of $C$ at $x$ consists of two connected components, one of which has genus zero.
 \end{definition}

 When $C$ is nodal, minimal is equivalent to semistable (no rational tails). Compare with \cite[Definition 3.2]{Catanese} for an even stronger notion. When $C$ has arithmetic genus one, this is the same as saying that $C$ contains no separating nodes. Recall \cite[Lemma 3.3]{SMY1}.

\begin{lem}\label{lem:min1}
 A minimal Gorenstein curve $E$ of arithmetic genus one can be: a smooth elliptic curve; a ring of $r\geq 1$ copies of $\PP^1$; or an elliptic $m$-fold point whose normalisation is the disjoint union of $m$ copies of $\PP^1$. In any case $\omega_E\simeq\OO_E$.
\end{lem}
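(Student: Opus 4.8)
The plan is to reduce the statement to a finite combinatorial trichotomy by means of an additivity formula for the arithmetic genus, and then to trivialise the dualising sheaf by a single non-vanishing argument applied to its unique global section.

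First I would establish the genus decomposition. Writing $\nu\colon\tilde Z\to Z$ for the normalisation and using $0\to\OO_Z\to\nu_*\OO_{\tilde Z}\to\mathcal Q\to 0$ with $\dim_\k\mathcal Q_q=\delta_q$, an Euler-characteristic count yields
\[p_a(Z)=\sum_i g(\tilde C_i)+\sum_q g(C,q)+b_1(\Gamma),\]
where the $\tilde C_i$ are the components of $\tilde Z$, $g(C,q)$ is the genus of the singularity at $q$ (Definition \ref{def:genus}), and $b_1(\Gamma)$ is the first Betti number of the graph whose vertices are the components together with one auxiliary vertex per singular point, joined by one edge per branch. All three summands are non-negative integers summing to $p_a(Z)=1$, so exactly one equals $1$, giving: (A) one component of genus one, all singularities of genus zero, $b_1=0$; (B) all components rational, one singularity of genus one, $b_1=0$; (C) all components rational, all singularities of genus zero, $b_1=1$.

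Two ingredients then dispatch the cases. I would first record that a genus-zero Gorenstein singularity is a node: a genus-zero singularity equals its seminormalisation (no conditions to descend), hence is an ordinary $m$-fold point, and the latter is Gorenstein only for $m\le 2$ (its conductor is the maximal ideal, so $\dim_\k R/I=1$, which equals $\delta=m-1$ precisely when $m=2$). Second, I would use minimality: since $p_a=1$, any separating node has a genus-zero side, so minimality forbids every separating node. In cases (A) and (B) the vanishing $b_1=0$ makes every node separating, hence absent; the only remaining singularities are then none (case A), leaving a single smooth genus-one curve, or the lone genus-one Gorenstein point (case B), which sits on a curve with no other singularities and $b_1=0$, forcing each branch onto its own smooth rational component — that is, the elliptic $m$-fold point with normalisation $\bigsqcup_{i=1}^m\PP^1$ (the cusp, tacnode, \dots\ recalled in the introduction). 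In case (C) the dual graph $\Gamma$ is connected with $b_1=1$, all vertices rational, and carries no separating node; a connected graph with $b_1=1$ is a cycle with trees attached, and each tree would end in a leaf glued by a separating node, so no trees occur and $\Gamma$ is a single cycle — a ring of $r\ge 1$ copies of $\PP^1$, with $r=1$ the nodal cubic.

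Finally I would prove $\omega_Z\simeq\OO_Z$ uniformly. Since $\deg\omega_Z=2p_a(Z)-2=0$ and, by adjunction, $\omega_Z$ restricts to degree $0$ on each component (on a cycle component $\nu^*\omega_Z=\omega_{\PP^1}(q+q')=\OO$, while on the elliptic $m$-fold point $\nu^*\omega_Z=\omega_{\tilde Z}(2\sum_iq_i)=\OO$ on every branch — the genus-one analogue of Lemma \ref{lem:dualising_lb}), and Serre duality gives $h^0(\omega_Z)=h^1(\OO_Z)=p_a(Z)=1$, I would take $0\neq s\in H^0(\omega_Z)$ and show it is nowhere vanishing. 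On any component the pullback of $s$ is a section of a degree-zero line bundle on $\PP^1$, hence is either identically zero or a nowhere-zero constant; picking a component $C$ with $s|_C\neq 0$, the value $s(q)$ in the one-dimensional fibre $(\omega_Z)_q\otimes\k(q)$ at each singular point $q$ of $C$ is non-zero, and since $\omega_Z$ is locally free at $q$ this value is shared by all branches through $q$, so every such branch carries a nowhere-zero restriction; connectedness of $Z$ then propagates non-vanishing everywhere. Thus $1\mapsto s$ defines $\OO_Z\xrightarrow{\sim}\omega_Z$ (the smooth elliptic case being immediate).

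I expect this last step to be the main obstacle. The propagation is transparent across nodes, but at the non-nodal elliptic $m$-fold point one must verify that a section not vanishing identically on a single branch is genuinely a local generator of $\omega_Z$ at $q$ — equivalently that its value $s(q)$ in $(\omega_Z)_q\otimes\k(q)$ is non-zero — and this is precisely where the explicit Rosenlicht description of $\omega_Z$ near the singularity (as in the proof of Lemma \ref{lem:dualising_lb}) enters, rather than any purely combinatorial bookkeeping.
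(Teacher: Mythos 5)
Your proposal is correct, but note that the paper does not prove this statement at all: it is recalled verbatim from Smyth (\cite[Lemma 3.3]{SMY1}, cited immediately before the lemma), so there is no in-paper proof to compare against. Your reconstruction follows the same route as Smyth's original argument — additivity of the arithmetic genus over components, singularity genera and $b_1$ of the dual graph, the fact that a Gorenstein genus-zero singularity is a node, minimality killing separating nodes, and then multidegree zero plus $h^0(\omega_Z)=h^1(\OO_Z)=1$ to produce a nowhere-vanishing section — and all the steps check out. One small remark: the final propagation step that you flag as the main obstacle needs less than you fear; since $\omega_Z$ is a line bundle, non-vanishing of $\nu^*s$ at one preimage $q_i$ forces $s(q)\neq 0$ in the fibre, hence $s$ generates $\omega_{Z,q}$ by Nakayama and $\nu^*s$ generates on every branch, so the explicit Rosenlicht/conductor description of $\omega_Z$ (as in Lemma \ref{lem:dualising_lb}) is not actually required there — it is only needed to identify $\nu^*\omega_Z$ as degree zero on each branch, which you already did via the conductor $\tm^2$.
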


We provide a similar description of minimal curves of genus two; the proof is left to the reader. By a semistable rational chain of length $k$ we mean the nodal union of $k$ copies of  $(\PP^1,0,\infty)$, so that $\infty_i$ is identified with $0_{i+1}$ for $i=1,\ldots,k-1$; if $k=0$, we mean a point.
\begin{lem}\label{lem:min2}
 A minimal Gorenstein curve of genus two can be either of the following (Figure \ref{fig:minimalcurves}):
 \begin{enumerate}[label=(\alph*)]
  \item a smooth curve of genus two;
  \item the union of two minimal Gorenstein curves of genus one, $E_1$ and $E_2$, nodally separated by a semistable rational chain of length $k\geq 0$;
  \item the nodal union of a minimal Gorenstein curve of genus one $E$ and a semistable rational rational chain of length $k\geq 0$;
  \item the union of two copies of $(\PP^1,0,1,\infty)$ with three semistable rational chains $R_0, R_1, R_\infty$ (of length $k_0,k_1,k_\infty\geq 0$) joining the homonymous points;
  \item\label{case:twice1} an elliptic $m$-fold point whose pointed normalisation is the disjoint union of either $m-2$ copies of $(\PP^1,0)$ and a semistable rational chain $R$ of length $k\geq 1$, or $m-1$ copies of $(\PP^1,0)$ and a $1$-pointed minimal Gorenstein curve of genus one (if the latter is not irreducible and $m\neq 1$, there are two genus one subcurves sharing a rational branch);
  \item\label{case:2} or a singularity of genus two with $m$-branches, whose normalisation is the disjoint union of $m$ copies of $\PP^1$.
 \end{enumerate}
\end{lem}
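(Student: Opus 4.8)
The argument I would run is a case analysis organised by the additivity of arithmetic genus, with minimality used at every turn to discard superfluous rational components. The master identity is that, for the normalisation $\nu\colon\widetilde C\to C$, writing $g_i$ for the genera of the connected pieces of $\widetilde C$ and $g(C,q_j)=\delta_j-m_j+1$ for the genus (Definition \ref{def:genus}) of the singular points $q_j$, the relation $\chi(\OO_C)=\chi(\OO_{\widetilde C})-\sum_j\delta_j$ together with $p_a(C)=2$ yields
\[2=\sum_i g_i+\sum_j g(C,q_j)+b_1(\Gamma),\]
where $\Gamma$ is the graph on the normalised components recording the branch-incidences of every $q_j$, so that $b_1(\Gamma)$ counts the loops (for a nodal curve this is the first Betti number of the usual dual graph). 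All three contributions are nonnegative, so the proof amounts to distributing two units of genus among \emph{component genus}, \emph{singularity genus}, and \emph{loops}. The only further input is that a Gorenstein singularity of genus $0,1,2$ is respectively a node, an elliptic $l$-fold point (the unique Gorenstein genus-one singularities, by Smyth \cite{SMY1}), or a singularity of type $I$ or $I\!I$ (Proposition \ref{prop:classification}); genus $\geq 3$ cannot occur.

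First I would settle the two extremes. If some $q_j$ has genus two, the identity forces every other summand to vanish: all components rational, all remaining singularities nodes, and $\Gamma$ a tree. Minimality kills rational tails, $b_1(\Gamma)=0$ forbids two branches of $q_j$ on the same component or an extra node among the branches, and no further component can attach; hence $\widetilde C$ is exactly $m$ disjoint copies of $\PP^1$, one per branch — case \eqref{case:2}. Assuming now that there is no genus-two singularity, I would test for a \emph{separating node}: a node $x$ whose normalisation disconnects $C$. By minimality neither side can have genus $0$, so the two sides have arithmetic genus one, and peeling off the (necessarily linear, by minimality) rational chain of separating nodes between them leaves two minimal Gorenstein curves of genus one, which Lemma \ref{lem:min1} classifies. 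This is precisely case (2), and it absorbs every configuration in which the two genus-one ``units'' are nodally separated — including, for instance, two cuspidal curves joined at a node.

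It remains to treat curves with no separating node. If all singularities are nodes, then $\sum_i g_i+b_1(\Gamma)=2$ with $\Gamma$ bridgeless; running through the resulting bridgeless genus-two dual graphs produces the smooth genus-two curve (case 1), the curves obtained by closing a minimal genus-one curve into a loop by a rational chain (case 3), and the theta-configuration (case 4), minimality forcing all genus-zero components onto the connecting or looping chains. The genuinely delicate case is when a (unique) elliptic $l$-fold point $q$ is present. Here I would partially normalise at $q$ alone: the resulting $C'$ has $\chi(\OO_{C'})=\chi(\OO_C)+l$, so if its connected components have genera $g'_i$ then $\sum_i g'_i=c'-l+1$, where $c'$ is their number; connectedness of $C$ forces each of them to meet $q$, whence $c'\leq l$ and therefore $c'\in\{l-1,l\}$. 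When $c'=l$ each piece carries a single branch, one has genus one and the others are rational spokes $(\PP^1,0)$ — the second alternative of \eqref{case:twice1}; when $c'=l-1$ all pieces are rational and exactly one carries two branches, hence is a chain $(R,0,\infty)$ closing a loop through $q$ — the first alternative. The parenthetical refinement (two genus-one subcurves sharing a rational branch, when the attached genus-one piece is reducible) is exactly the elliptic-point clause of Lemma \ref{lem:min1} read off the normalised piece.

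The principal obstacle is not a single estimate but the exhaustiveness and non-redundancy of the split, together with keeping straight the two forms of minimality at play. Since the minimality hypothesis constrains \emph{only} nodes, the rational spokes attached at an elliptic point in case \eqref{case:twice1} are legitimate, even though they would be forbidden tails if the attaching point were a node; one must resist pruning them, while still invoking Lemma \ref{lem:min1} to govern the internal shape of the attached genus-one curve. Making the separating-node test the primary branch point is the cleanest way to avoid double-counting: it isolates case (2) (the only configuration with a genus-separating node) from the bridgeless ones (cases (1), (3), (4), \eqref{case:twice1}), and within the latter it separates the purely nodal types from the one in which the genus concentrates at an elliptic point. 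The remaining points that repay care are verifying that the loop-type configurations genuinely contain no separating node, and that every connecting rational locus is a chain rather than a branched tree.
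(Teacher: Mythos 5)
The paper offers no proof of this lemma: it is stated immediately after Lemma \ref{lem:min1} with only the remark that minimal curves of genus two ``may similarly'' be described, the argument being left implicit as the genus-two analogue of Smyth's classification. So there is nothing to compare against line by line; judged on its own, your argument is correct and complete. The identity $2=\sum_i g_i+\sum_j g(C,q_j)+b_1(\Gamma)$ is the right master equation, and making the disconnecting-node test the primary branch is exactly the right organising move: since minimality forbids a genus-zero side, any disconnecting node splits the genus as $1+1$ and routes the curve to case (2), while in all remaining configurations the total absence of disconnecting nodes is what simultaneously forces the rational loci to be chains rather than branched trees, forces the spokes at an elliptic point to be single copies of $(\PP^1,0)$, and guarantees that the genus-one branch in case \eqref{case:twice1} is itself minimal and attached directly at the singular point rather than through an intermediate chain (which would reintroduce a disconnecting node with two genus-one sides). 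Two points are worth making explicit in a write-up. First, the parenthetical ``(unique)'' before the elliptic $l$-fold point is inaccurate --- two genus-one singularities can coexist (two cusps on a line, a cusp on a branch of a tacnode, and so on); your argument survives because you normalise at a single chosen $q$ and the second singularity is absorbed into the ``$1$-pointed minimal Gorenstein curve of genus one'' via Lemma \ref{lem:min1}, but the phrasing invites the reader to suspect a missed case. Second, the six items of the lemma overlap (the theta configuration of case (4) is an instance of case (3) with $E$ a ring of $\PP^1$'s; an elliptic point with a rational loop appears both in case (3) and in case \eqref{case:twice1}), so the only thing to verify is exhaustiveness, not a clean partition --- you acknowledge this, and your distribution of the two units of genus among component genus, singularity genus, and loops does exhaust all possibilities.
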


\begin{center}
 \begin{figure}
  \includegraphics[width=\textwidth]{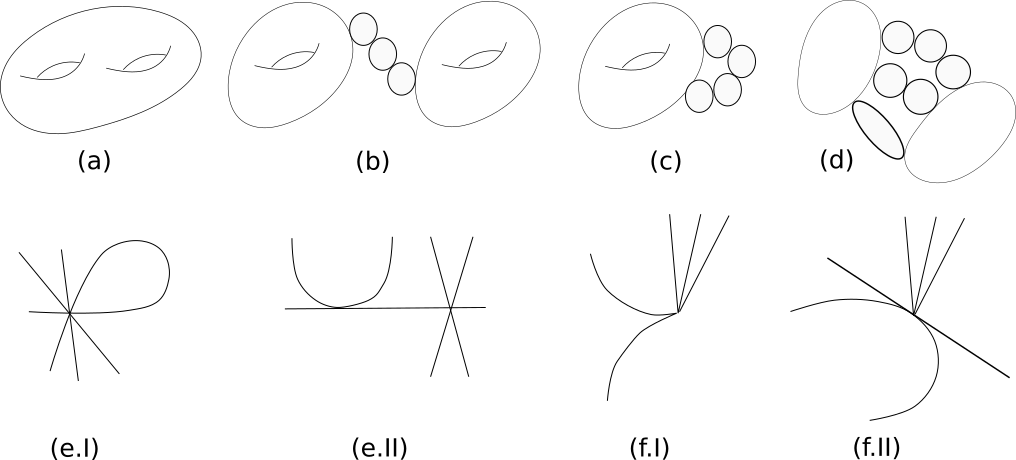}
  \caption{Minimal curves of genus two.}
\label{fig:minimalcurves}
 \end{figure}
\end{center}

\begin{rem}\label{rem:special}
In case $Z$ is a minimal curve of genus two of type \ref{case:twice1} or \ref{case:2} above, there are \emph{special} components supporting the degree of $\omega_Z$. For case \ref{case:2}, see Definition \ref{def:special_branches} and Corollary \ref{cor:dualising_line_bundle}. For case \ref{case:twice1}, the special component is either the genus one branch, or the rational component that contributes two branches to the singularity (recall that the restriction of the dualising sheaf to a component introduces a twist by the conductor ideal, see \cite[Proposition 1.2]{Catanese}).
\end{rem}

\subsection{Semistable tails}
 Let $\overline{\mathcal C}_0$ be a minimal curve with a genus two singularity of type $I$ (resp. $I\!I$), and let $\overline{\mathcal C}$ be a one-parameter smoothing over a trait $\dvr$, with closed point $0$ and generic point $\eta$. Let $\mathcal P$ denote $\PP(\pi_*\omega_{\overline{\mathcal C}/\dvr})$, which is a $\PP^1$-bundle over $\dvr$. It follows from an easy calculation (or from \cite[Theorem D]{Catanese}) that the canonical series is basepoint-free, and so there is a morphism:
 \bcd
 \overline{\mathcal C}\ar[dr, "\bar\pi" below left]\ar[rr] & & \mathcal P \ar[dl] \\
 & \dvr &
 \ecd
 such that, in the central fibre, it restricts to a double cover on the special branch (resp. an isomorphism on each of the special branches) and it contracts the axes. The geometric general fibre is the hyperelliptic cover $\overline{\mathcal C}_{\bar\eta}\to\PP^1_{\bar\eta}$, endowing $\PP^1_{\bar\eta}$ with a simple branch divisor $B_{\bar\eta}$ of length $6$. Possibly after passing to a finite cover of $\dvr$, $B_{\eta}$ itself splits into the union of six disjoint sections, and we can take the stable model $(\mathcal T,B)$ of $(\PP^1_\eta,B_\eta)$, together with its associated double cover $\mathcal C$. We thus have a diagram:
 \bcd
 \mathcal C\ar[r,"\psi"]\ar[d,"\phi"] & \mathcal T\ar[d]\\
 \overline{\mathcal C} \ar[r] & \mathcal P
 \ecd
 over $\dvr$ (by a slight abuse of notation), where the upper row is a family of admissible covers. 
 
 The line bundle $\OO_{\mathcal P}(1)$ pulls back to $\omega_{\bar \pi}$ on $\overline{\mathcal C}$. Its pullback $\OO_{\mathcal T}(1)$ on $\mathcal T$ has degree $1$ on exactly one component of the tree. Pulling back further to $\mathcal C$, we gather the following information:
 \begin{enumerate}[label=(\alph*)]
  \item $\phi^*\omega_{\bar\pi}=\omega_{\pi}(\pazocal Z)$ for a vertical divisor $\pazocal Z$ supported on the exceptional locus $\operatorname{Exc}(\phi)=: Z$.
  \item $\psi^*\OO_{\mathcal T}(1)=\OO_{\mathcal C}(q+\bar q)$ for a choice of two conjugate points of $\mathcal C$ lying over the same point of $\mathcal T$, belonging to the component on which $\OO_{\mathcal T}(1)$ is ample.
  \item\label{pt:compatibleZ} $\pazocal Z$ is the pullback of a vertical divisor on $\mathcal T$.
 \end{enumerate}
 
 This description leads to the following simple observations:
 
 \begin{enumerate}[label=(\Roman*)]
  \item If $\overline{\mathcal C}_0$ has a type $I$ singularity, the branch of ${\mathcal C}_0$ corresponding to the singular branch of $\overline{\mathcal C}_0$ is attached to a Weierstrass point of $\pazocal Z$ with respect to $\psi$.
  \item If $\overline{\mathcal C}_0$ has a type $I\!I$ singularity, the branches of ${\mathcal C}_0$ corresponding to the twin branches of $\overline{\mathcal C}_0$ are attached to two conjugate points of $\pazocal Z$ with respect to $\psi$.
 \end{enumerate}

 Moreover, the distance of the special branch(es) from the core is always less than that of the axes; the ratio is roughly $1:3$ in case $I$, and $1:2$ in case $I\!I$, but, more precisely, this depends on the relative position of the attaching points of the chains in the dual graph of the core. An elegant treatment uses the language of tropical geometry.

 We consider the tropicalization $\tropC\to\tropT$ of $\psi$, as in Section \ref{sec:recaplog}. After further base-change and normalised blow-ups, we can assume that $\mathcal C$ has regular total space; this only affects $\tropC$ by subdividing edges, not changing their lengths. Now $\tropC$ is nothing but the dual graph of the special fiber $\mathcal C_0$, with edges of length $1$.
 
 The vertical divisor $\pazocal Z$ can be represented by a piecewise-linear function on $\tropC$ with integral slope along the edges; moreover, observation \ref{pt:compatibleZ} above shows that $\lambda$ is pulled back from a piecewise-linear function $\lambda_T$ on $\tropT$ - for this to be true we have to allow half-integral slopes along the edges. Finding $\lambda$ becomes a simple matter of degree-matching on the tree $\tropT$; this shows existence and uniqueness (up to global translation).
 
 Recall that the canonical divisor $K_{\tropC}$ has the following multiplicity on a vertex $v$ of $\tropC$:
 \begin{equation}\label{eq:canonical_degree}
 2g(v)-2+\operatorname{val}(v),                                                                                                                                                                                                                                                                                                                                                                                                                                                                                                                                                                                                                                                                                                                                                                                                                                                                                                                                                                                                                                                             \end{equation}
where $g\colon V(\tropC)\to\mathbb Z$ is the genus assignment, and $\operatorname{val}(v)$ is the number of bounded edges adjacent to $v$; \eqref{eq:canonical_degree} is also the degree of $\omega_\pi$ when restricted to the component of $\mathcal C_0$ corresponding to $v$.

Notice that $\tropT$ is decorated with six unlabelled $B$-legs corresponding to the branch divisor $B$. It follows from the local Riemann-Hurwitz condition of Section \ref{sec:recaplog} that the divisor $\OO_{\tropT}(1)$ on $\tropT$ pulling back to $K_{\tropC}$ has the following multiplicity at a vertex $v^\prime$ of $\tropT$:
 \[ \operatorname{val}(v^\prime)-2+\frac{1}{2}\#\{B\text{-legs adjacent to } v^\prime\} \]
(notice that pulling back doubles the multiplicity of points with a single preimage). Therefore, the equation that we have to solve in order to find $\lambda_T$ is:
 \begin{equation}\label{eqn:Dv}
  \operatorname{val}(v^\prime)-2+\frac{1}{2}\#\{B\text{-leg adjacent to } v^\prime\}+\sum_{e\text{ bounded edge adjacent to } v^\prime}s(\lambda_T,e)
 \end{equation}
 equals $1$ on the vertex of $\tropT$ corresponding to the special branch, and $0$ otherwise. Here $s(\lambda_T,e)$ denotes the outgoing slope of $\lambda_T$ along the edge $e$.

  For the benefit of the reader, we include Figure \ref{fig:adm_fun_smooth_core} to illustrate the shape of $\lambda$ in the simplest possible case, namely when the core is smooth. The blue numbers represent the slope of $\lambda$ along the corresponding edges. Figure \ref{fig:adm_fun_reducible_core} exhibits how the distance of the axes from the core can vary when the latter becomes more degenerate.

  \begin{figure}[htb]
  \centering
  \includegraphics[width=.8\textwidth]{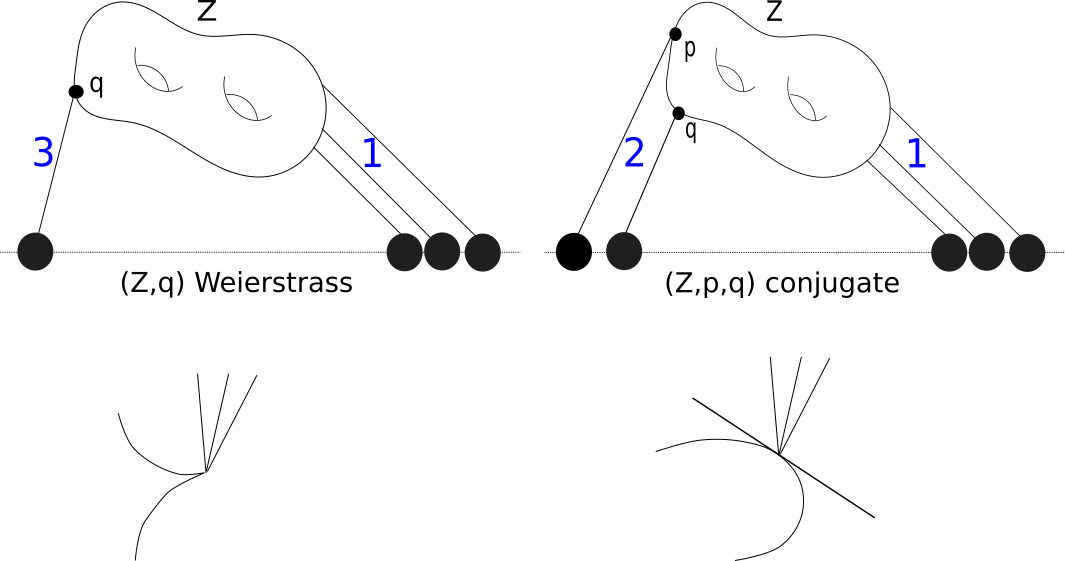}
  \caption{Semistable tail of a type $I_4$ (left), resp. type $I\!I_5$ (right), singularity, generic case: the core is smooth, the singular branch is attached to a Weierstrass point (resp. the twin branches are attached to conjugate points), the other branches are attached to distinct points, and the corresponding edge-length is three (resp. two) times longer than the special one.}
  \label{fig:adm_fun_smooth_core}
 \end{figure}

  \begin{figure}[htb]
  \centering
  \includegraphics[width=.7\textwidth]{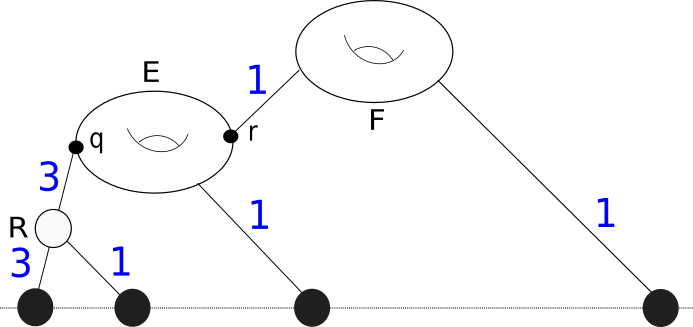}
  \caption{A more degenerate semistable tail of a type $I_4$ singularity. Here $Z$ consists of $R$, $E$, and $F$ together. $(Z,q)$ is Weierstrass in the sense that $2q\sim 2r\in\operatorname{Pic}(E)$.}
  \label{fig:adm_fun_reducible_core}
 \end{figure}

 Two important observations allow us to write down $\lambda$ explicitly in all possible situations:
 \begin{enumerate}
  \item The balancing equation \eqref{eqn:Dv} is unaffected by \emph{tropical modifications}, i.e. growing a tree on which $\lambda$ has constant slope $1$.
  \item The balancing equation \eqref{eqn:Dv} is stable under edge contraction.
 \end{enumerate}
 It follows that it is enough to study the case that the core consists of a configuration of rational curves; there are only two stable such configurations, named \emph{dumbbell} and \emph{theta}. Figure \ref{fig:admredcores} (from \cite{BC}) illustrates the situation: we draw both the source (above) and the target (below) of the tropical admissible cover; the blue numbers on the latter represent the slope of $\lambda_T$ - that of $\lambda$ can be recovered by multiplying with the expansion factor of $\psi$; and the red vertices correspond to the special branches. The vertices corresponding to the axes of the genus two singularity do not appear in the picture: they lie at the same height as the red vertices, on an arbitrary configuration of trees emanating from the core, along which $\lambda$ has slope $1$.

\begin{figure}
	\centering
		\begin{tikzpicture} [scale=.6]
		\tikzstyle{every node}=[font=\normalsize]
		\tikzset{arrow/.style={latex-latex}}
		\def\S{2.825cm} 

		
		\coordinate (A) at (0,6,0);
		  \coordinate (B) at (2,6,0);
		 \coordinate (C) at (3.5,5.5,0);
		 
		  \coordinate (CM) at (3.5,5,0);;
		 	\coordinate (AM) at (5,6,0);
		  \coordinate (BM) at (7,6,0);;

		 \coordinate (D) at  (5,4.5,0); 
		 \coordinate (Dd) at (5.5,5.5,0); 
		\coordinate (D1) at  (5,4,-.5);
		\coordinate (D2) at  (5,4,0);
		
		\coordinate (Ew) at (6.5,5,7);
		\coordinate (Ee) at (6.5,3.8,0);
		\coordinate (E) at (6.5,2,0); 
         \coordinate (E1) at (6.5,3,-.5); 
         \coordinate (E2) at ( 6.5,3,0); 
      	\coordinate (At) at (0,1,0);
		  \coordinate (Bt) at (2,1,0);
		 \coordinate (Ct) at (3.5,1,0);
		\coordinate (Dt) at  (5,1,0);
		\coordinate (Et) at (6.5,1,0); 
		\coordinate (Ddt) at (5.5,1,0); 
		\coordinate (Ewt) at (6.5,1,7);); 
		\coordinate (BMt) at (7,1,0);;

		\coordinate (P1) at (-.3,1,1.3);
		\coordinate (P2) at (.3,1,1.7);
		\coordinate (P3) at (1.8,1,1.3);
		\coordinate (P4) at (3.3,1,1.3);
		\coordinate (P5) at (4.8,1,1.3);
		\coordinate (P6) at (6.3,1,1.3);
		\coordinate (Pp6) at (6.8,1,1.7); 
		\coordinate (Q6) at (5.1,1,1.7); 
        \coordinate (T6) at (6.5,1,6);
        
        \coordinate (QM6) at (6.7,1,1.3); 
        \coordinate (TM6) at (7.2,1,1.6); 
      
      
      \draw (A) to [out=130,in=50] (B);
      \draw (A) to [out=-130,in=-50] (B);
      \draw[->-] (C)--(B);
      \draw[->-] (D) to [out=70,in=70](C);
      \draw[->-] (D) to [out=-110,in=-110](C);
       \draw[->-] (E)--(D);
         
        \draw[->-] (Et)--(Dt);
        \draw[->-] (Dt)--(Ct);
        \draw[->-] (Ct)--(Bt);
        \draw (Bt)--(At);

		\draw[blue] (At)--(P1);
		\draw[blue] (At)--(P2);
		\draw[blue] (Bt)--(P3);
		\draw[blue] (Ct)--(P4);
		\draw[blue] (Dt)--(P5);
		\draw[blue] (Et)--(P6);
		
\node at (1,1,0) [above] {\small{0}};
\node at (2.75,1,0) [above] {\small{$\frac{1}{2}$}};
\node at (4.25,1,0) [above] {\small{$1$}};
\node at (5.75,1,0) [above] {\small{$\frac{3}{2}$}};

       
       \draw[dashed] (A)--(At);
        \draw[dashed] (B)--(Bt);
       \draw[dashed] (C)--(Ct);
       \draw[dashed] (D)--(Dt);
        \draw[dashed] (E)--(Et);

      \foreach \x in {A,B,C,D,At,Bt,Ct,Dt,Et}
   \fill (\x) circle (3pt);
     	\fill[red] (E) circle (3pt);
     	


\begin{scope}[every coordinate/.style={shift={(0,9,0)}}]

       \draw ([c]A) to [out=130,in=50] ([c]B);
      \draw ([c]A) to [out=-130,in=-50] ([c]B);
       \draw[->-] ([c]C)--([c]B);
      \draw[->-] ([c]D1) to [out=90,in=0]([c]C);
      \draw[->-] ([c]D2) to [out=-180,in=-90]([c]C);
       \draw[->-] ([c]E1)--([c]D1);
       \draw[->-] ([c]E2)--([c]D2);
       \draw ([c]D1) -- ([c]D2);
         
        \draw[->-] ([c]Et)--node[above] {\small{$2$}}([c]Dt);
        \draw[->-] ([c]Dt)--node[above] {\small{$1$}}([c]Ct);
        \draw[->-] ([c]Ct)--node[above] {\small{$\frac{1}{2}$}}([c]Bt);
        \draw ([c]Bt)--node[above] {\small{0}}([c]At);

		\draw[blue] ([c]At)--([c]P1);
		\draw[blue] ([c]At)--([c]P2);
		\draw[blue] ([c]Bt)--([c]P3);
		\draw[blue] ([c]Ct)--([c]P4);
		\draw[blue] ([c]Dt)--([c]P5);
		\draw[blue] ([c]Dt)--([c]Q6);

       
       \draw[dashed] ([c]A)--([c]At);
        \draw[dashed] ([c]B)--([c]Bt);
       \draw[dashed] ([c]C)--([c]Ct);
       \draw[dashed] ([c]D1)--([c]Dt);
       \draw[dashed] ([c]D2)--([c]Dt);
        \draw[dashed] ([c]E1)--([c]Et);
        \draw[dashed] ([c]E2)--([c]Et);

      \foreach \x in {A,B,C,D1,D2,At,Bt,Ct,Dt,Et}
   \fill ([c]\x) circle (3pt);
     	\fill[red] ([c]E1) circle (3pt);
     		\fill[red] ([c]E2) circle (3pt);
\end{scope}

 
 
 \begin{scope}[every coordinate/.style={shift={(0,-9,0)}}]

       \draw ([c]A) to [out=130,in=50] ([c]B);
      \draw ([c]A) to [out=-130,in=-50] ([c]B);
       \draw[->-] ([c]CM)--([c]B);
       \draw[->-] ([c]Ew)--([c]CM);
        \draw[->-] ([c]CM)--([c]AM);
      \draw ([c]AM) to [out=130,in=50] ([c]BM);
      \draw ([c]AM) to [out=-130,in=-50] ([c]BM);
      
        \draw ([c]At)--node[above] {\small{$0$}}([c]Bt);
        \draw[->-] ([c]Ct)--node[above] {\small{$\frac{1}{2}$}}([c]Dt);
        \draw[->-] ([c]Ewt)--node[left]{\small$\frac{3}{2}$}([c]Ct);
        \draw[->-] ([c]Ct)--node[above] {\small{$\frac{1}{2}$}}([c]Bt);
        \draw ([c]Dt)--node[above] {\small{$0$}}([c]BMt);

		\draw[blue] ([c]At)--([c]P1);
		\draw[blue] ([c]At)--([c]P2);
		\draw[blue] ([c]Bt)--([c]P3);
		\draw[blue] ([c]Dt)--([c]P5);
		\draw[blue] ([c]BMt)--([c]QM6);
		\draw[blue] ([c]BMt)--([c]TM6);

       
       \draw[dashed] ([c]A)--([c]At);
        \draw[dashed] ([c]B)--([c]Bt);
       \draw[dashed] ([c]CM)--([c]Ct);
       \draw[dashed] ([c]Ew)--([c]Ewt);
       \draw[dashed] ([c]AM)--([c]Dt);
     \draw[dashed] ([c]BM)--([c]BMt);

        \foreach \x in {A,B,AM,BM,At,Bt,Ct,Dt,BMt,CM,Ewt}
   \fill ([c]\x) circle (3pt);
     	\fill[red] ([c]Ew) circle (3pt);

    \end{scope}



\coordinate (Cc) at (0,0.5,0); 
\coordinate (C1) at (0,1,0);
\coordinate (C1b) at (0,1,-.8);
\coordinate (A1) at (1.5,3,0);
\coordinate (A1b) at (1.5,3,-.8);

\coordinate (G) at (0.2,2.75,0);
\coordinate (F1) at (2,3,0);
\coordinate(F2) at (2,2.5,0);
\coordinate (A14) at (1.2,4,0);
\coordinate (H1) at (3.5,2.2,3.2);
\coordinate (H2) at (3.5,2.2,4);

\coordinate (B1) at (3.5,3.7,-.8);
\coordinate (B2) at (3.5,3.7,0);
\coordinate (A2) at (8,5,7);
\coordinate (A3) at (5.8,3.9,0);


\coordinate (C1t) at (0,0,0);
\coordinate (Gt) at (0.2,0,0);
\coordinate (Ft) at (2,0,0);


\coordinate (A14t) at (1.2,0,0);
\coordinate (H1t) at (3.5,0,4);

\coordinate (A1t) at (1.5,0,0);
\coordinate (B1t) at (3.5,0,0);
\coordinate (A2t) at (8,0,7);
\coordinate (A3t) at (5.8,0,0); 

\coordinate (Q1) at ( 1.2,0,1.5);
\coordinate (Q2) at ( 1.8,0,1.8);
\coordinate (Q3) at ( 8.3,0,8.9);
\coordinate (Q4) at (7.6,0,8);
\coordinate (Q5) at ( 5.2,0,1.5);
\coordinate (Q61) at ( 5.8,0,1.8);

\begin{scope}[every coordinate/.style={shift={(10,10,0)}}]


\draw[->-] ([c]C1b)--([c]A1b);
\draw[->-] ([c]C1)--([c]A1);
\draw ([c]A1) -- ([c]A1b);
\draw[->-] ([c]A1b) to [out=70,in=150] ([c]B1);
\draw[->-] ([c]A1) to [out=-70,in=-70] ([c]B2);
\draw ([c]B1) to [out=10,in=150] ([c]A3);
\draw ([c]B2) to [out=-10,in=-150] ([c]A3);
\draw ([c]B1) to [out=-40,in=110] ([c]A2);
\draw ([c]B2) to [out=-50,in=-100] ([c]A2);
\draw[->-] ([c]C1t)--node[above] {\small{$2$}}([c]A1t);
\draw[->-] ([c]A1t)--node[above] {\small{$1$}}([c]B1t);
\draw ([c]B1t)--node[right] {\small{$0$}}([c]A2t);
\draw ([c]B1t)--node[above] {\small{$0$}}([c]A3t);


		\draw[blue] ([c]A1t)--([c]Q1);
		\draw[blue] ([c]A1t)--([c]Q2);
		\draw[blue] ([c]A2t)--([c]Q3);
		\draw[blue] ([c]A2t)--([c]Q4);
		\draw[blue] ([c]A3t)--([c]Q5);
		\draw[blue] ([c]A3t)--([c]Q61);

       
       \draw[dashed] ([c]C1)--([c]C1t);
        \draw[dashed] ([c]C1b)--([c]C1t);
       \draw[dashed] ([c]A1)--([c]A1t);
       \draw[dashed] ([c]A1b)--([c]A1t);
       \draw[dashed] ([c]B1)--([c]B1t);
        \draw[dashed] ([c]B2)--([c]B1t);
        \draw[dashed] ([c]A2)--([c]A2t);
        \draw[dashed] ([c]A3)--([c]A3t);

\foreach \x in {A1,A1b,A2,A3,B1,B2,C1t,A1t,B1t,A2t,A3t}
   \fill ([c]\x) circle (3pt);

  \fill[red] ([c]C1) circle (3pt);
    \fill[red] ([c]C1b) circle (3pt);

\end{scope}

\begin{scope}[every coordinate/.style={shift={(10,1,0)}}]


\draw[->-] ([c]Cc)--([c]A1);
\draw[->-] ([c]A1) to [out=70,in=150] ([c]B1);
\draw[->-] ([c]A1) to [out=-70,in=-70] ([c]B2);

\draw ([c]B1) to [out=10,in=150] ([c]A3);
\draw ([c]B2) to [out=-10,in=-150] ([c]A3);
\draw ([c]B1) to [out=-40,in=110] ([c]A2);
\draw ([c]B2) to [out=-50,in=-100] ([c]A2);
\draw[->-] ([c]C1t)--node[above] {\small{$\frac{3}{2}$}}([c]A1t);
\draw[->-] ([c]A1t)--node[above] {\small{$1$}}([c]B1t);
\draw ([c]B1t)--node[right] {\small{$0$}}([c]A2t);
\draw ([c]B1t)--node[above] {\small{$0$}}([c]A3t);


		\draw[blue] ([c]A1t)--([c]Q1);
		\draw[blue] ([c]A1t)--([c]Q2);
		\draw[blue] ([c]A2t)--([c]Q3);
		\draw[blue] ([c]A2t)--([c]Q4);
		\draw[blue] ([c]A3t)--([c]Q5);
		\draw[blue] ([c]A3t)--([c]Q61);

\foreach \x in {A1,A2,A3,B1,B2,C1t,A1t,B1t,A2t,A3t}
   \fill ([c]\x) circle (3pt);
  

       
       \draw[dashed] ([c]Cc)--([c]C1t);
       \draw[dashed] ([c]A1)--([c]A1t);
       \draw[dashed] ([c]B1)--([c]B1t);
        \draw[dashed] ([c]B2)--([c]B1t);
        \draw[dashed] ([c]A2)--([c]A2t);
        \draw[dashed] ([c]A3)--([c]A3t);
        
         \fill[red] ([c]Cc) circle (3pt);
\end{scope}

\begin{scope}[every coordinate/.style={shift={(10,-8,0)}}]


\draw ([c]B1) to [out=170,in=30] ([c]A14);
\draw ([c]B2) to [out=190,in=-30] ([c]A14);

\draw[->-] ([c]H1) to ([c]B1);
\draw[->-] ([c]H2) to ([c]B2);
\draw ([c]B1) to [out=10,in=150] ([c]A3);
\draw ([c]B2) to [out=-10,in=-150] ([c]A3);

\draw ([c]B1) to [out=-40,in=110] ([c]A2);
\draw ([c]B2) to [out=-50,in=-100] ([c]A2);

\draw ([c]A14t)--node[above] {\small{$0$}}([c]B1t);
\draw[->-] ([c]H1t)--node[right] {\small{$2$}}([c]B1t);
\draw ([c]B1t)--node[right] {\small{$0$}}([c]A2t);
\draw ([c]B1t)--node[above] {\small{$0$}}([c]A3t);


		\draw[blue] ([c]A14t)--([c]Q1);
		\draw[blue] ([c]A14t)--([c]Q2);
		\draw[blue] ([c]A2t)--([c]Q3);
		\draw[blue] ([c]A2t)--([c]Q4);
		\draw[blue] ([c]A3t)--([c]Q5);
		\draw[blue] ([c]A3t)--([c]Q61);

\foreach \x in {A14,A2,A3,B1,B2,H1t, A14t,B1t,A2t,A3t}
   \fill ([c]\x) circle (3pt);
  

       
       \draw[dashed] ([c]A14)--([c]A14t);
       \draw[dashed] ([c]H1)--([c]H1t);
       \draw[dashed] ([c]H2)--([c]H1t);
     \draw[dashed] ([c]B1)--([c]B1t);
        \draw[dashed] ([c]B2)--([c]B1t);
        \draw[dashed] ([c]A2)--([c]A2t);
        \draw[dashed] ([c]A3)--([c]A3t);
        
 \fill[red] ([c]H1) circle (3pt);
   \fill[red] ([c]H2) circle (3pt);
\end{scope}
\end{tikzpicture}
\caption{Tropicalisation of semistable tails with maximally degenerated core: the dumbbell (l), and the theta graph (r). The red vertices correspond to the special branches.}
\label{fig:admredcores}
\end{figure}
 
 Summing up, we have proven the following:

\begin{prop}\label{prop:tailI}
 Let $\phi\colon\mathcal C\to\overline{\mathcal C}$ be a birational contraction over the spectrum of a discrete valuation ring $\dvr$, where: $\mathcal C\to \dvr$ is a family of prestable (reduced, nodal) curves of arithmetic genus two with smooth generic fibre $\mathcal C_{\eta}$; $\overline{\mathcal C}\to\dvr$ is a family of Gorenstein curves with a genus two singularity of type $I_m$ (resp. $I\!I_m$) at $q\in\overline{\mathcal C}_0$. Denote by $(Z;q_1,\ldots,q_m)$ the exceptional locus $\Exc(\phi)=\phi^{-1}(q)$, marked with $Z\cap\overline{\mathcal C_0\setminus Z}$, where $q_m$ corresponds to the singular branch of $\overline{\mathcal C}_0$ (resp. $q_1,q_m$ correspond to the twin branches of $\overline{\mathcal C}_0$). Then:
 \begin{enumerate}[leftmargin=.6cm]
  \item $(Z,q_m)$ is Weierstrass (resp. $(Z,q_1,q_m)$ is conjugate).
  \item On $\operatorname{trop}(\mathcal C)$, the distance of $q_m$ (resp. $q_1$ and $q_m$ - they are equidistant) from the core is less than the distance of any other $q_i$ from the core, and the former - together with the shape of $\operatorname{trop}(Z)$ - determines the latter.
 \end{enumerate}
\end{prop}

Vice versa, every such genus two subcurve can be contracted to a Gorenstein singularity.

\begin{prop}\label{prop:contractionI}
 Let $(\mathcal C,p_1,\ldots,p_n)\to\dvr$ be a family of pointed semistable curves of arithmetic genus two such that $\mathcal C$ has regular total space and smooth generic fibre, and $(\mathcal C,p_1)\to \Delta$ is Weierstrass (resp. $(\mathcal C,p_1,\bar p_1)\to \Delta$ is conjugate). Let $(Z,q_1,\ldots,q_m)$ be a genus two subcurve of $\mathcal C_0$ containing none of the $p_i(0)$, marked by $Z\cap \overline{\mathcal C_0\setminus Z}$ so that the tail containing $p_1$ is attached to $Z$ at $q_m$ (resp. the tails containing $p_1$ and $\bar p_1$ are attached to $Z$ at $q_1$ and $q_m$), and satisfying all the shape prescriptions above. There exists a contraction $\phi\colon\mathcal C\to\overline{\mathcal C}$ over $\dvr$, with exceptional locus $Z$, such that $\overline{\mathcal C}\to\dvr$ is a family of Gorenstein curves containing a type $I_m$ (resp. type $I\!I_m$) singularity in the central fibre.
\end{prop}

\begin{proof}(of Proposition \ref{prop:contractionI})
 By blowing down some rational tails outside $Z$, we can assume that $\mathcal C_0\setminus Z=\sqcup_{i=1}^m T_i$ with each $T_i\simeq\PP^1$. The image of $p_i(0)$ and $p_j(0)$ might now coincide for $i\neq j$. The total space of the curve can still be assumed to be smooth by the Castelnuovo criterion. By abuse of notation, we denote the resulting family of pointed curves by $(\mathcal C,p_1,\ldots,p_n)$. By assumption on the shape of $Z$, we can find an effective Cartier $\pazocal Z$ supported on $Z$ such that $\mathcal L=\omega_{\mathcal C/\dvr}(\pazocal Z+\sum p_i)$ is trivial on $Z$ and relatively ample elsewhere (both on $T_i$ and on the generic fibre). Now we show that $\mathcal L$ is semiample on $\mathcal C$.
 
 Consider the (a priori different) line bundle $\mathcal L^\prime=\OO(2p_1+\sum p_i)$ (resp. $\OO(p_1+\bar p_1+\sum p_i)$). Since we assumed $p_1$ to be Weierstrass (resp. $p_1$ and $\bar p_1$ to be conjugate), $\mathcal L_\eta\simeq\mathcal L^\prime_\eta$. On the other hand it is easy to see that the multi-degrees of $\mathcal L_0$ and $\mathcal L^\prime_0$ coincide, as $Z$ is unmarked and each rational tail is isomorphic to $\PP^1$; it follows from the separatedness of $\Pic^0_{\mathcal C/\dvr}\to\dvr$ (see \cite[p. 136]{Deligne-Gabber} or \cite[\S 9.4]{BLR}) that $\mathcal L$ and $\mathcal L^\prime$ are isomorphic line bundles, so that, in particular, $\mathcal L$ is trivial \emph{on a neighbourhood of $Z$}. Observe now that 
 \[R^1\pi_*\mathcal L(-{\pazocal Z})= R^1\pi_*\omega_{\mathcal C/\dvr}(\sum p_i)=0\]
 by semistability, hence $\pi_*\mathcal L\twoheadrightarrow \pi_*(\mathcal L_{|{\pazocal Z}})=\pi_*\OO_{\pazocal Z}$, which contains the constants, showing that $\mathcal L$ is semiample along $Z$; that it is along the $T_i$ is easier.
 
 We therefore have a well-defined  morphism:
 \[\mathcal C\xrightarrow{\phi}\overline{\mathcal C}=\underline{\operatorname{Proj}}_\dvr\left(\bigoplus_{n\geq 0}\pi_*\mathcal L^{\otimes n}\right)\to\dvr\]
 associated to $\mathcal L$. The proof that $\overline{\mathcal C}\to \dvr$ is a flat family of Gorenstein curves goes along the lines of \cite[Lemma 2.13]{SMY1} or \cite[Proposition 3.7.3.1]{RSPW1}. It is then clear from the classification that it contains a type $I_m$ (resp. $I\!I_m$) singularity.
 
\end{proof}

\begin{rem}\label{rem:smoothable_sing}
 It follows that genus two Gorenstein singularities are smoothable.
\end{rem}

It would be interesting to construct the contraction of Proposition \ref{prop:contractionI} pointwise - as opposed to in a smoothing family - by extending the methods of \cite{Bozlee}.

\section{The new moduli functors}\label{sec:stability}
The idea is to replace subcurves of positive genus with isolated singularities, the number of special points on the former bounding the number of branches of the latter. The following is a slight generalisation of \cite[Definition 3.4]{SMY1}.
\begin{definition}
 Let $(C,p_1,\ldots,p_n)$ be a reduced curve, marked by smooth points. For a connected subcurve $D\subseteq C$, we define its \emph{level} to be: \[ \lev(D)=\lvert D\cap\overline{C\setminus D}\rvert+\lvert\{p_1,\ldots,p_n\}\cap D\rvert.\]
\end{definition}
In this definition, the multiplicity of $D\cap\overline{C\setminus D}$ is not taken into account.

We omit the proof of the following lemma; compare with \cite[Corollary 3.2, Lemma 3.5]{SMY1}.
\begin{lem}
 Let $(C,p_1,\ldots,p_n)$ be a pointed \emph{semistable} curve of arithmetic genus two, with minimal genus two subcurve $Z$. For every connected subcurve $Z^\prime\subseteq C$ of genus two, we have an inclusion $Z\subseteq Z^\prime$ and $\lev(Z)\leq\lev(Z^\prime)$.
\end{lem}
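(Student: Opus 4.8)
The plan is to reduce everything to a single genus computation and then run it twice, once for $Z$ and once for $Z'$. For any \emph{connected} subcurve $W\subseteq C$ of arithmetic genus two, I would partially normalise $C$ along the nodes joining $W$ to $\overline{C\setminus W}$ and compare Euler characteristics, obtaining
\[ p_a(C)=p_a(W)+\sum_a\big(p_a(U_a)+\delta_a-1\big), \]
where the $U_a$ are the connected components of $\overline{C\setminus W}$ and $\delta_a=\lvert U_a\cap W\rvert$. Since $C$ is semistable, hence nodal, every intersection is a node and each summand is $\geq 0$; as $p_a(C)=p_a(W)=2$, all of them vanish, forcing $p_a(U_a)=0$ and $\delta_a=1$. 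Thus \emph{the complement of any genus two subcurve is a disjoint union of rational trees, each meeting it in a single node}. I would record this for $Z$, writing $\overline{C\setminus Z}=\bigsqcup_a U_a$ attached at $u_a$, and for $Z'$, writing $\overline{C\setminus Z'}=\bigsqcup_i Y_i$ attached at $x_i$.

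For the inclusion $Z\subseteq Z'$, I would argue by contradiction: suppose a component of $Z$ lies outside $Z'$, hence in some tree $Y_i$. Because $Y_i$ is joined to the rest of $C$ only at $x_i$, the subcurve $V:=Z\cap Y_i$ can meet $\overline{Z\setminus V}$ only at $x_i$; connectedness of $Z$ then forces $V$ to be connected and attached to $Z$ at the single node $x_i$. But $V$ is a subcurve of a rational tree, so $p_a(V)=0$, and normalising $Z$ at $x_i$ splits off the genus-zero piece $V$ — contradicting the minimality of $Z$ (Lemma \ref{lem:min2}). In passing this also shows that $Z$ is the \emph{unique} minimal genus two subcurve, being contained in every other one.

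Finally, for the level inequality I would compare $\lev(Z)$ and $\lev(Z')$ tree by tree. Since $Z\subseteq Z'$ and $\overline{C\setminus Z'}\subseteq\overline{C\setminus Z}=\bigsqcup_a U_a$, each $U_a$ either misses $Z'$, is partially absorbed (with $Z'\cap U_a$ a subtree containing $u_a$), or is fully absorbed. Letting $m_a$ be the number of boundary nodes of $Z'$ lying in $U_a$ and $\mu_a$ the number of markings on $Z'\cap U_a$, the single boundary node $u_a$ of $Z$ accounts for
\[ \lev(Z')-\lev(Z)=\sum_a\big((m_a-1)+\mu_a\big). \]
The only term that could a priori be negative is $m_a-1=-1$, occurring exactly when $U_a$ is fully absorbed into $Z'$; \textbf{this is the main obstacle}, and it is the place where semistability is essential. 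The resolution is that a rational tree attached at a single node has a leaf $\PP^1$ whose only node is the one joining it to its parent, so semistability forces that leaf to carry at least one marking; hence $\mu_a\geq 1$ whenever $U_a\subseteq Z'$, and the corresponding term is $\geq 0$. In the partial-absorption case $m_a\geq 1$ because at least one branch of $U_a$ is cut off, and in the no-absorption case the term is exactly zero. Summing over $a$, every term is non-negative and $\lev(Z)\leq\lev(Z')$ follows.
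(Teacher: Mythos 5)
Your proof is correct, and it is essentially the argument the paper intends: the lemma is stated without proof, deferring to the genus-one analogues \cite[Corollary 3.2, Lemma 3.5]{SMY1}, whose method -- additivity of arithmetic genus under partial normalisation, the observation that the complement of a connected genus-two subcurve is a disjoint union of rational trees each attached at a single node, and semistability forcing a marking on any leaf of a fully absorbed tree -- is exactly what you have adapted, and your tree-by-tree bookkeeping $\lev(Z')-\lev(Z)=\sum_a((m_a-1)+\mu_a)$ with each term non-negative is the right way to organise it. Two cosmetic points: your argument (like the statement's use in Definition \ref{def:m-stability}) tacitly takes $Z'$ connected, and the minimality you invoke at the end is really the defining property of $Z$ (no separating node splitting off a genus-zero piece) rather than the classification of Lemma \ref{lem:min2}.
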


\begin{definition}
We say that a point $p$ \emph{cleaves} to a component $D$ of a curve $C$ if there is a \emph{unique} semistable rational chain of length $k\geq 0$ (see the discussion preceding Lemma \ref{lem:min2}) in $C$ connecting $p$ to a smooth point of $D$. 
\end{definition}

\begin{rem}
 Allowing singularities of genus two forces us to allow singularities of genus one as well by deformation openness. Indeed, singularities of genus zero and one appear in the miniversal family of singularities of genus two. Also, singularities of type $I$ do appear in the miniversal family of singularities of type $I\!I$, and vice versa. For low values of $m$, this follows from a neat result of Grothendieck (\cite[p. 2277]{C-ML}, see also \cite{Arnold,Demazure}):
 \begin{thm}\label{thm:ADE}
  Let $(C,q)$ be a curve singularity of ADE type. The singularities appearing in the miniversal deformation of $(C,q)$ are all and only the ADE singularities whose Dynkin diagram can be obtained as a full subgraph of the Dynkin diagram of $(C,q)$.
 \end{thm}
\end{rem}

We finally come to the definition of $m$-stability for curves of genus two.
\begin{definition}\label{def:m-stability}
 Fix positive integers $1\leq m<n$. Let $(C,p_1,\ldots,p_n)$ be a connected, reduced, complete curve of arithmetic genus two, marked by smooth distinct points. We say that $C$ is $m$-stable if:
 \begin{enumerate}[leftmargin=.7cm]
  \item\label{cond:sing} $C$ is Gorenstein with only: nodes; elliptic $l$-fold points, $l\leq m+1$; type $I_{\leq m}$, type $I\!I_{\leq m}$, and dangling (see Definition \ref{def:dangling}) $I\!I_{m+1}$ singularities of genus two, as singular points.
  \item\label{cond:lev2} If $Z$ is a connected subcurve of arithmetic genus two, then $\lev(Z)>m$.
  \item\label{cond:lev1} If $E$ is a connected subcurve of arithmetic genus one, then either $\lev(E)>m+1$,
  or $p_1$ cleaves to $E$ and $\overline{C\setminus E}$ is a union of rational curves.
  \item\label{cond:aut} $H^0(C,\Omega_C^\vee(-\sum_{i=1}^n p_i))=0$.
  \item\label{cond:p1} If $C$ contains a singularity of genus two, or an elliptic $l$-fold point with a self-branch or a genus one branch, then $p_1$ cleaves to one of the special branches (see Remark \ref{rem:special}).
 \end{enumerate}
\end{definition}

\begin{rem}
 The definition is not $\mathfrak{S}_n$-symmetric. In the argument below, we exploit the asymmetry to write the dualising line bundle of a genus two (sub)curve $Z$ as $\omega_Z\simeq\OO_Z(q_1+\bar q_1)$, where $q_1$ is the point of $Z$ closest to $p_1$, and $\bar q_1$ its conjugate, sometimes depending on a one-parameter smoothing. Compare with the situation in genus one, where the dualising line bundle of a minimal Gorenstein curve is trivial (all smooth points are non-special). We also refer to $p_1$ when deciding which genus one subcurve to contract first in case there are two disjoint ones of low level.
\end{rem}

\begin{rem}
 The case $m=0$ would not give back the Deligne-Mumford compactification, but rather Schubert's one.
\end{rem}

\begin{rem}\label{rmk:lev1solev2}
 If there is a nodally attached subcurve of genus one, condition \eqref{cond:lev1} and condition \eqref{cond:aut} jointly imply condition \eqref{cond:lev2}. Indeed, from Corollary \ref{cor:explicitnoaut} we have $\lev(Z)\geq\lev(E)-1$. The only cases (up to relabelling) in which the level drops by one are: when $Z=(E,p_1,\ldots,p_{l-2},q_1,q_2)\sqcup_{\{q_1,q_2\}}(\PP^1,q_1,q_2,p_{l-1})$; and when $Z=(E_1,p_1,\ldots,p_{l-1},q)\sqcup_q(E_2,q)$, where all the $E$ have genus one.
\end{rem}

The following is our main:
{\setstretch{1.1}
\begin{thm}
 For $1\leq m <n$, the moduli stack of $n$-pointed $m$-stable curves of genus two $\oM_{2,n}^{(m)}$ is a proper Deligne-Mumford stack of finite type over $\operatorname{Spec}(\mathbb Z[\frac{1}{6}])$ - containing $\pazocal M_{2,n}$ as a dense open substack, and therefore irreducible.
\end{thm}}
\begin{proof}
 \begin{enumerate}
  \item \emph{Algebraicity (diagonal)} - \emph{The diagonal $\Delta\colon \oM_{2,n}^{(m)}\to \oM_{2,n}^{(m)}\times\oM_{2,n}^{(m)}$ is representable, quasicompact, and of finite type.} Since $m$-stable curves are canonically polarised, it follows from Grothendieck's theory of Hilbert schemes that the \emph{Iso}-functor between two $m$-stable curves over $S$ is representable by a quasiprojective scheme over $S$.
  
  \item \emph{Algebraicity (atlas) \& irreducibility} - \emph{There exists an irreducible scheme $H$, of finite type over $\operatorname{Spec}(\mathbb Z[\frac{1}{6}])$, with a smooth and surjective morphism $H\to \oM_{2,n}^{(m)}$.} Fix an integer $N>2+8(m+1)$; let $d=N(2+n)$ and $r=d-2$. By Lemma \ref{lem:boundedness} below, every $n$-pointed $m$-stable curve over a field $\k$ admits a pluri-log-canonical embedding of degree $d$ in $\PP^r$. Let $H_0$ denote the Hilbert scheme of degree $d$, genus two curves in $\PP^r$. Let $H_1\subseteq H_0\times(\PP^r)^{\times n}$ denote the locally closed subscheme consisting of $([C],p_1,\ldots,p_n)$ such that every $p_i$ belongs to the smooth locus of $C$; this is open in the incidence variety. By Lemma \ref{lem:defop} below, there is an open subscheme $H_2\subseteq H_1$ parametrising $m$-stable curves (note that $H_1$ is of finite type over $\operatorname{Spec}(\mathbb Z[\frac{1}{6}])$, and in particular Noetherian). By the representability of the Picard scheme \cite[Proposition 5.1]{GIT}, there is a locally closed subscheme $H\subseteq H_2$ representing $([C],p_1,\ldots,p_n)$ such that $\OO_C(1)=\omega_C(\sum_{i=1}^np_i)^{\otimes N}$. Now there is a morphism $H\to \oM_{2,n}^{(m)}$ that is surjective by construction, and smooth because two different embeddings of an $m$-stable curve differ by the choice of a basis of $H^0(C,\omega_C(\sum_{i=1}^np_i)^{\otimes N})$. Since every $m$-stable curve is smoothable (Remark \ref{rem:smoothable_sing} and \cite[I.6.10]{Kollar-rational}), $H$ is irreducible.
  
  \item \emph{DM} - \emph{The diagonal $\Delta\colon \oM_{2,n}^{(m)}\to \oM_{2,n}^{(m)}\times\oM_{2,n}^{(m)}$ is unramified.} It is enough to show that, for an $m$-stable curve $(C,p_1,\ldots,p_n)$ over a field $\k$, the \emph{Iso}-group scheme $\operatorname{Aut}_{\k}(C,p_1,\ldots,p_n)$ is unramified. Its tangent space at the identity can be identified with the vector space $H^0(C,\Omega_C^\vee(-\sum_{i=1}^n p_i))$, which vanishes by Definition \ref{def:m-stability}\ref{cond:aut}. Note that we need the assumption on the base characteristic in order to translate this vanishing into a combinatorial criterion on the pointed normalisation and singularity type of the curves (Corollary \ref{cor:explicitnoaut}).
  
  \item \emph{Properness} - Follows from the valuative criterion (Proposition \ref{prop:valcrit}).
 \end{enumerate}

\end{proof}

\begin{lem}[(boundedness)]\label{lem:boundedness}
 If $(C,p_1,\ldots,p_n)$ is an $m$-stable curve of genus two, the $N$-th power of $A=\omega_C(\sum_{i=1}^np_i)$ is very ample for every $N>2+8(m+1)$.
\end{lem}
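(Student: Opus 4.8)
The plan is to leverage the ampleness of $A$, already recorded in the Remark following Corollary \ref{cor:deg_dualising} (it is exactly what condition \eqref{cond:aut} buys us), and to upgrade it to very ampleness of a fixed power by a cohomological argument on the Gorenstein curve $C$. Write $L:=A^{\otimes N}$. Since $C$ is reduced, connected and projective over $\k$, the bundle $L$ is very ample as soon as, for every subscheme $\xi\subseteq C$ of length at most two, the restriction $H^0(C,L)\to H^0(\xi,L|_\xi)$ is surjective; via the ideal sequence $0\to L\otimes I_\xi\to L\to L|_\xi\to 0$ this is equivalent to the vanishing $H^1(C,L\otimes I_\xi)=0$ (the length-one subschemes give global generation, the length-two ones separation of points and tangent vectors). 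Because $C$ is Gorenstein, Serre duality applies; in practice it is cleanest to invoke the degree criterion that it yields: for a line bundle --- more generally a torsion-free rank-one sheaf --- $F$ on $C$, one has $H^1(C,F)=0$ provided $\deg(F|_Y)\ge 2p_a(Y)-1$ for every connected subcurve $Y\subseteq C$. Since $p_a(Y)\le 2$ throughout, every threshold on the right is at most $3$.

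Next I would assemble the degree estimates. By the conductor/adjunction formula recalled in Remark \ref{rem:special}, a nodally attached subcurve $Y$ satisfies $\deg A|_Y=\deg\omega_Y+\lev(Y)=(2p_a(Y)-2)+\lev(Y)$. Conditions \eqref{cond:lev2} and \eqref{cond:lev1} then give $\deg A|_E\ge m+2$ on a genus-one nodally attached $E$ and $\deg A|_Z\ge m+3$ on a genus-two subcurve $Z$, while ampleness alone forces $\deg A|_Y\ge 1$ on \emph{every} connected subcurve. Hence $\deg L|_Y=N\deg A|_Y\ge N$ on every $Y$. This immediately settles the two high-level cases, and it also settles every length-two $\xi$ supported at smooth points: there $I_\xi$ is invertible of colength $2$, so $\deg\big((L\otimes I_\xi)|_Y\big)\ge N\deg A|_Y-2\ge N-2$, and $N\ge 5$ already makes this at least $3\ge 2p_a(Y)-1$.

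The remaining, genuinely delicate, case is a length-two $\xi$ supported at one of the singular points $q$ allowed by \eqref{cond:sing} --- a node, an elliptic $l$-fold point with $l\le m+1$, or a genus-two singularity of type $I_{\le m}$, $I\!I_{\le m}$, or dangling $I\!I_{m+1}$. Here $I_\xi$ fails to be invertible at $q$, so $L\otimes I_\xi$ is only torsion-free of rank one and the clean degree criterion does not apply verbatim. To handle it I would pass to the normalization $\nu\colon\widetilde C\to C$ and compare $L\otimes I_\xi$ with $\nu_*\nu^*L$ twisted down along the branches through $q$; the discrepancy is controlled by the conductor, whose colength is $\delta=m+1$ at the worst singular points, together with the local exponents $\tm^{4}\subseteq R$ (type $I$) and $\tm^{3}\subseteq R$ (type $I\!I$) from Proposition \ref{prop:classification}. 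Separating the up-to-$(m+1)$ branches and the tangent directions at $q$ forces $\deg\nu^*L$ on each branch meeting $q$ to beat a local threshold growing linearly in the number of branches; since $\deg\nu^*A$ on such a branch can be as small as $1$, this is precisely where a bound of the shape $N>2+8(m+1)$ is demanded, the constant $8$ absorbing the genus-two contribution $2g=4$, the conductor exponents, and the colength-$2$ correction.

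I expect this last step to be the main obstacle: the vanishing criterion is transparent for invertible twists, but at a point of embedding dimension $m+1$ one must control the non-locally-free sheaf $L\otimes I_\xi$ and verify, uniformly across the singularity types classified in Section \ref{sec:sing}, that $N>2+8(m+1)$ dominates the local separation requirement. Once this is established the vanishings $H^1(C,L\otimes I_\xi)=0$ hold for all length-$\le 2$ subschemes $\xi$, so $L=A^{\otimes N}$ separates points and tangent vectors everywhere and is therefore very ample, as claimed.
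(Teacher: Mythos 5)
Your reduction of very ampleness to the vanishings $H^1(C,A^{\otimes N}\otimes I_pI_q)=0$ is exactly the paper's starting point, and your treatment of subschemes supported at smooth points is fine. But the case you flag as ``the main obstacle'' --- a length-two $\xi$ supported at a singular point, where $I_\xi$ is not invertible --- is precisely the content of the lemma, and your proposal does not resolve it: you describe a strategy (normalisation, conductor, local exponents) without supplying the actual containments or the degree bookkeeping, and you do not explain where the constant $8$ in $N>2+8(m+1)$ comes from. As written, the argument has a genuine gap at its only nontrivial step. A secondary issue: the degree criterion ``$\deg(F|_Y)\geq 2p_a(Y)-1$ for all connected subcurves $Y$ implies $H^1(C,F)=0$'' is standard for line bundles on nodal curves, but you invoke it for torsion-free rank-one sheaves on curves with elliptic $l$-fold and genus-two singularities, which is exactly the generality in which it would need proof.

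The paper closes the gap uniformly, with no case analysis over singularity types, as follows. By Serre duality ($C$ is Gorenstein) the required vanishing is equivalent to $H^0(C,\omega_C\otimes A^{-N}\otimes(I_pI_q)^\vee)=0$. Writing $\nu^{-1}(p)=\{p_1,\ldots,p_h\}$ and $\nu^{-1}(q)=\{q_1,\ldots,q_k\}$ with $h,k\leq m+1$, the classification in Proposition \ref{prop:classification} (together with \cite[Proposition A.3]{SMY1}) gives $\nu_*\OO_{\tilde C}(-D)\subseteq I_pI_q$ for $D=4(\sum_i p_i+\sum_j q_j)$, since $\tm^4\subseteq R$ at every allowed singular point; this is the source of the exponent $4$ and hence of $\deg(D)\leq 8(m+1)$. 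The quotient being torsion, applying $\hhom(-,\OO_C)$ and adjunction yields $(I_pI_q)^\vee\subseteq\nu_*\OO_{\tilde C}(D)$, so it suffices to kill $H^0(\tilde C,\OO_{\tilde C}(D)\otimes\nu^*(\omega_C\otimes A^{-N}))$; since $\nu^*\omega_C$ has total degree $2$ and $\nu^*A$ has degree at least $1$ on every branch, this sheaf has negative degree on each component of $\tilde C$ as soon as $N>2+8(m+1)$. If you want to complete your write-up, this dualisation step --- trading the non-invertible $I_pI_q$ for an honest divisor on the smooth normalisation --- is the ingredient you are missing.
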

\begin{proof}
 It is enough to show that, for every pair of points $p,q\in C$ (possibly equal):
 \begin{enumerate}
  \item \emph{basepoint-freeness}: $H^1(C,A^{\otimes N}\otimes I_p)=0$;
  \item \emph{separating points and tangent vectors}: $H^1(C,A^{\otimes N}\otimes I_pI_q)=0$.
 \end{enumerate}
By Serre duality we may equivalently show that $H^0(C,\omega_C\otimes A^{-N}\otimes(I_pI_q)^\vee)=0$. Let $\nu\colon\tilde C\to C$ be the normalisation, and let $\nu^{-1}(p)=\{p_1,\ldots,p_h\}$, $\nu^{-1}(q)=\{q_1,\ldots,q_k\}$, with $h,k\leq m+1$. It follows from Proposition \ref{prop:classification} and \cite[Proposition A.3]{SMY1} that $\nu_*\OO_{\tilde C}(-D)\subseteq I_pI_q$ for $D=4(\sum_{i=1}^hp_i+\sum_{j=1}^kq_j)$ (note that $\deg(D)\leq 8(m+1)$); furthermore, the quotient is torsion, therefore, by applying $\hhom(-,\OO_C)$, we find $(I_pI_q)^\vee\subseteq\nu_*\OO_{\tilde C}(D)$. It is thus enough to show that $H^0(\tilde C,\OO_{\tilde C}(D)\otimes\nu^*(\omega_C\otimes A^{-N}))=0$. Finally, $\nu^*\omega_C$ (resp. $\nu^*A$) has degree at most two (resp. at least one) on each component of $\tilde C$, hence it is enough to take $N>2+8(m+1)$.
\end{proof}

\begin{lem}[(deformation openness)]\label{lem:defop}
 Let $(\mathcal C,\sigma_1,\ldots,\sigma_n)\to S$ be a family of curves over a Noetherian base scheme with $n$ sections. The locus \[\{s\in S|(\mathcal C_{\bar s},\sigma_1(\bar s),\ldots,\sigma_n(\bar s)) \text{ is } m\text{-stable}\}\] is Zariski-open in $S$.
\end{lem}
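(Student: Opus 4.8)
The plan is to establish openness in three moves: reduce to a family satisfying the obvious standing hypotheses, check that $m$-stability is a \emph{constructible} condition, and then verify that the $m$-stable locus is stable under generization. This suffices, since a constructible subset of a Noetherian scheme is open precisely when it is closed under generization, and stability under generization may in turn be tested along traits $\dvr\to S$.

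First I would peel off the conditions that are open for soft reasons. Flatness and the constancy of the arithmetic genus let us assume every geometric fibre is reduced, connected, and complete of genus two; the Gorenstein locus is open, as is the locus over which the $\sigma_i$ are disjoint and land in the smooth locus. For condition \eqref{cond:aut}, rather than wrestle with the base-change behaviour of $\Omega^\vee_{\mathcal C/S}$, I would read the vanishing $H^0(C,\Omega_C^\vee(-\sum p_i))=0$ as the assertion that the relative automorphism group scheme $\underline{\Aut}(\mathcal C/S,\sigma)$ — representable and of finite type thanks to the boundedness lemma — is unramified along the fibre, its identity tangent space being exactly $H^0(C,\Omega_C^\vee(-\sum p_i))$. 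Since the non-unramified locus of a finite-type group scheme is closed, \eqref{cond:aut} is open, hence \emph{a fortiori} stable under generization.

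Next, for the local condition \eqref{cond:sing} I would invoke deformation-openness of the permitted class of singularities. The genus-one members (nodes, elliptic $l$-fold points) are covered by \cite{SMY1}, and for the genus-two members the classification of Proposition \ref{prop:classification}, combined with Grothendieck's Theorem \ref{thm:ADE}, shows that every singularity occurring in a miniversal deformation of a permitted type is again permitted, once the bounds $l\le m+1$ and $I_{\le m},\,I\!I_{\le m}$ are matched against the ADE adjacencies; a singularity outside the list (the non-Gorenstein genus-two cusp, an elliptic $l$-fold point with $l>m+1$, or a type $I$/$I\!I$ point with too many branches) can therefore appear only on a closed subset of $S$. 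The \emph{dangling} qualifier on $I\!I_{m+1}$ and the $p_1$-prescription \eqref{cond:p1} are not local, so I defer them to the combinatorial step. That step treats \eqref{cond:lev2}, \eqref{cond:lev1}, \eqref{cond:p1} together: by the boundedness lemma the fibres form a bounded family, so $S$ carries a finite stratification by the decorated topological type (dual graph, distribution of genus and markings, singularity types) of its geometric fibres, and on each stratum the levels $\lev(Z),\lev(E)$ and the cleaving data of $p_1$ are constant. Thus the $m$-stable locus is a union of strata, hence constructible, and it remains only to verify generization: given a specialization $s_1\rightsquigarrow s_0$ with $\mathcal C_{s_0}$ $m$-stable, pick a trait through $s_0$ dominating $s_1$, normalise, and resolve as in Section \ref{sec:sstails} so the total space is regular along the special fibre; the singularity and automorphism conditions on $\mathcal C_{s_1}$ follow from the paragraphs above, and for the level and cleaving conditions I would take a violating genus-two (resp.\ nodally attached genus-one) subcurve of $\mathcal C_{s_1}$, pass to its core via the minimal-subcurve lemma and Lemma \ref{lem:min2}, and use the tail description of Propositions \ref{prop:tailI}--\ref{prop:tailII} to see that its flat limit produces a subcurve of $\mathcal C_{s_0}$ of no larger level — together with the matching behaviour of $p_1$ and of the dangling/Weierstrass-versus-conjugate data — contradicting the $m$-stability of $\mathcal C_{s_0}$.

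The hard part will be exactly this last bookkeeping: faithfully transporting the minimal genus-two and genus-one subcurves, their levels, and the $p_1$-cleaving and dangling data across the smoothing of nodes, so that a violation on the generic fibre is reflected on the special fibre. This is the genus-two analogue of Smyth's argument in \cite{SMY1}, made more delicate by the broken $\mathfrak S_n$-symmetry and the Weierstrass-versus-conjugate dichotomy of Remark \ref{rmk:Wandconj}, and it is precisely here that the combinatorics developed in Section \ref{sec:sstails} must be brought to bear.
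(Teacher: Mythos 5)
Your skeleton is the same as the paper's: peel off the Gorenstein and marking conditions as open for standard reasons, treat \eqref{cond:aut} via unramifiedness of the automorphism group scheme (the paper likewise invokes homogeneity to pass from openness on the source to openness on the base), treat \eqref{cond:sing} by deformation-openness of the allowed singularity class, and handle the remaining conditions by constructibility plus stability under generization checked over a trait. Two points diverge. First, a small one: for \eqref{cond:sing} with $m\geq 2$ the singularities are not ADE, so Grothendieck's Theorem \ref{thm:ADE} does not literally apply; the paper instead uses upper semicontinuity of the embedding dimension together with the fact that Proposition \ref{prop:classification} (and the genus bound) exhausts all Gorenstein singularities of genus $\leq 2$ and embedding dimension $\leq m+1$, which is the cleaner route.

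Second, and more substantively: your plan for the generization step is to resolve the total space over the trait ``as in Section \ref{sec:sstails}'' and then appeal to Propositions \ref{prop:tailI}--\ref{prop:tailII}. This does not work as stated. Those propositions assume a \emph{smooth} generic fibre and describe exceptional loci of contractions, whereas here the generic fibre $\mathcal C_{s_1}$ is precisely the curve whose $m$-stability is in question and may be singular; moreover, resolving replaces the special fibre $\mathcal C_{s_0}$ by a semistable model, so a violation detected there no longer contradicts the $m$-stability of $\mathcal C_{s_0}$ without further translation. The paper's argument is both simpler and avoids this: work directly in the given family $\mathcal C\to\dvr$, take the flat closures $E_1,E_2$ (or $E$, or the distinguished branches) of the offending subcurves of the generic fibre, and check that the attaching data specializes without loss — the key observations being that a separating node of the generic fibre must limit to a node of the special fibre because the genus budget of two is already exhausted, so $\lvert E_{i,\bar\eta}\cap\overline{C_{\bar\eta}\setminus E_{i,\bar\eta}}\rvert=\lvert E_{i,0}\cap\overline{C_{0}\setminus E_{i,0}}\rvert$ and the levels match exactly; and that the (union of the) distinguished branches of a genus-two or genus-one singularity of $C_{\bar\eta}$ limits to a genus-one subcurve of $C_0$ which necessarily contains the distinguished branches of $C_0$ and hence $p_{1,0}$, which is what settles \eqref{cond:p1}. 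This is exactly the ``hard bookkeeping'' you defer, and it is resolved by flat closure in the original family rather than by the semistable-tail machinery.
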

\begin{proof}
 Having connected fibres which are Gorenstein curves of arithmetic genus two is an open condition (see for example \cite[\href{https://stacks.math.columbia.edu/tag/0E1M}{Tag 0E1M}]{stacks-project}). Only singularities of genus zero (nodes), one (elliptic $l$-folds), and two may then occur.
 
 The case $m=1$ deserves special attention. In this case, that condition \eqref{cond:sing} is open follows from acknowledging that $I_1=A_4$, $I\!I_2=A_5$, while tacnodes, cusps, and nodes are $A_3$, $A_2$, and $A_1$-singularities respectively, and from Grothendieck's result on the deformation theory of ADE singularities (see Theorem \ref{thm:ADE} above).
 
 The case $m\geq 2$ simply follows from upper semicontinuity of embedded dimension and the fact that we have exhausted all possible Gorenstein singularities of genus $\leq 2$, and embedding dimension $\leq m+1$.
 
 Condition \eqref{cond:aut} translates to: the locus where the automorphism group is unramified is open in the base. But this holds more generally for group schemes (see the end of the proof of \cite[Lemma 3.10]{SMY1}): suppose that $p\colon G\to S$ is unramified at $g\in G$; then, it is unramified in a neighbourhood $g\in U\subseteq G$. Translating $U$ we can make sure that it is saturated with respect to $p$, so that we can transfer the openness of the unramified locus from the source to the target of $p$.
 
 The other conditions are topological, hence constructible. Since $S$ is Noetherian, it is enough to check their openness over the spectrum of a discrete valuation ring. Assume that the geometric generic fibre $C_{\bar\eta}$ contains two genus one subcurves $E_{1,\bar\eta}$ and $E_{2,\bar\eta}$; their closures $E_1$ and $E_2$ in $\mathcal C$ are then flat families of genus one curves over $\dvr$. 
 The number of connected components of $\overline {C\setminus E_i}$ is locally constant (by the Zariski decomposition and \cite[\href{https://stacks.math.columbia.edu/tag/0E0D}{Tag 0E0D}]{stacks-project}), so: \[\lvert E_{i,\bar\eta}\cap\overline{C_{\bar\eta}\setminus E_{i,\bar\eta}}\rvert=\lvert E_{i,0}\cap\overline{C_{0}\setminus E_{i,0}}\rvert.\]
 The number of markings on $E_i$ is also constant. Hence we can deduce condition \eqref{cond:lev1} for $C_{\bar\eta}$ from the same condition on $C_0$. Condition \eqref{cond:lev2} follows in this case from Remark \ref{rmk:lev1solev2}; it can be proven analogously when there is no subcurve of genus one.
 
 Finally, suppose that $C_{\bar\eta}$ has a genus two singularity, then so does $C_0$. The (union of the) distinguished branch(es) $E_{\bar\eta}$ of $C_{\bar\eta}$ is a genus one singularity, and so is its limit $E_0$ in $C_0$. It has to contain the distinguished branch(es) of $C_0$, because any subcurve not containing them has genus zero; therefore, by assumption, $E_0$ contains $p_{1,0}$. Then also $E_{\bar\eta}$ contains $p_{1,\bar\eta}$, because the markings are contained in the non-singular locus of the curve. Similarly, if $C_{\bar\eta}$ has a genus one singularity with a self-branch, the limit of such a branch is a genus one subcurve $E_0$ of $C_0$; the latter may very well acquire a genus two singularity, but $E_0$ will contain the special branches of it, so it will be connected to $p_1$. We conclude as above. The case that $C_{\bar\eta}$ contains a genus one subcurve of low level is analogous. We have thus proved that condition \eqref{cond:p1} is open.
\end{proof}

\begin{prop}[(Valuative criterion of properness for $\oM^{(m)}_{2,n}$)]\label{prop:valcrit}
 Given a smooth $n$-pointed curve of genus two $C_\eta$ over a discrete valuation field $\eta=\operatorname{Spec}(K)\hookrightarrow\dvr$, there exists a finite base-change $\dvr^\prime\to\dvr$ after which $C_\eta$ can be completed to an \emph{$m$-stable} curve over $\dvr^\prime$. Two such models are always dominated by a third one.
\end{prop}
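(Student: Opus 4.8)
The plan is to verify both halves of the valuative criterion—existence of an $m$-stable limit and domination of any two such limits—by reducing everything to the contraction results of Section~\ref{sec:sstails}. First I would invoke semistable reduction: after a finite base change $\dvr'\to\dvr$, the smooth generic curve $C_\eta$ extends to a family $\pi\colon\mathcal C\to\dvr'$ with regular total space, smooth generic fibre, and reduced nodal (semistable) central fibre $\mathcal C_0$, the markings extending to disjoint sections meeting $\mathcal C_0$ in its smooth locus. All subsequent modifications will be elementary birational transformations—blow-ups of the regular total space at closed points of $\mathcal C_0$, followed by blow-downs of superfluous rational components—which preserve both regularity and the generic fibre.

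Next I would run a contraction algorithm dictated by the level conditions, treating the genus-two locus before the genus-one loci. Let $Z_{\min}$ be the core (minimal genus-two subcurve). If $\lev(Z_{\min})>m$ there is nothing to do on the genus-two side; otherwise I use the first marking to orient the construction, as in condition~\eqref{cond:p1}: set $q_1$ to be the point of the core closest to $p_1$. When $(Z_{\min},q_1)$ is Weierstrass (Remark~\ref{rmk:Wandconj}) the target is a type~$I_m$ singularity; otherwise I take $\bar q_1$ to be the conjugate of $q_1$—blowing up the total space along a section through $\bar q_1$ to nucleate the second special branch, exactly as in the dangling-$A_5$ example of the introduction—and aim for a type~$I\!I_m$ singularity. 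In either case, Propositions~\ref{prop:tailI} and~\ref{prop:tailII} prescribe the exact balanced shape (the trend-$2$ and trend-$3$ chains, and the lengths of the axial chains in terms of $\dist^*$ and $\dist$) that the subcurve $Z$ to be contracted must have; I would realise this shape by the elementary modifications above, and then apply Propositions~\ref{prop:contractionI} and~\ref{prop:contractionII} to contract $Z$. What remains of the central fibre still carries all the markings, and genus-one subcurves of excessive level are then handled by Smyth's genus-one algorithm (balanced subcurves contracted to elliptic $l$-fold points, $l\le m+1$), again using $p_1$ to break ties between genus-one subcurves of equal level. A final stabilisation of leftover rational components secures the rDM property and condition~\eqref{cond:aut}, after which one checks the five clauses of Definition~\ref{def:m-stability} directly.

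For the domination statement, suppose $\overline{\mathcal C}'$ and $\overline{\mathcal C}''$ are two $m$-stable completions of $C_\eta$. After a further finite base change I would choose a common semistable model $\mathcal C$ with regular total space dominating both, so that each arises by a contraction $\phi'\colon\mathcal C\to\overline{\mathcal C}'$, $\phi''\colon\mathcal C\to\overline{\mathcal C}''$. The point is that the data controlling each contraction is intrinsic to $C_\eta$ together with the choice of $p_1$: the core, the special branch selected by $q_1$, and hence the type ($I$ versus $I\!I$) coincide for both; moreover the balancing analysis of Propositions~\ref{prop:tailI}--\ref{prop:tailII} shows that the exceptional locus, the contracted singularity, and the twisting divisor $D$ with $\phi^\ast\omega=\omega_{\mathcal C/\dvr}(D)$ are uniquely determined by that data. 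Consequently $\Exc(\phi')=\Exc(\phi'')$ and the relatively (semi)ample bundles used in the $\underline{\operatorname{Proj}}$ constructions agree up to the relevant power, so $\overline{\mathcal C}'\cong\overline{\mathcal C}''$ over $\dvr$, with $\mathcal C$ playing the role of the dominating third family.

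The hardest step, I expect, is the interface between the combinatorics and the non-intrinsic nature of Weierstrass and conjugate points: as flagged in Remark~\ref{rmk:Wandconj}, whether a point of a non-compact-type core is Weierstrass may depend on the smoothing, so I must verify both that the balanced shapes of Propositions~\ref{prop:tailI}--\ref{prop:tailII} can always be realised by regular elementary modifications (for existence) and that they are forced to coincide for the two models (for domination). The symmetry of the rational necklaces recorded in Remark~\ref{rmk:necklace_symmetry} is the key input that keeps the trend bookkeeping along the special chains consistent under base change, and controlling it carefully is where the real work lies.
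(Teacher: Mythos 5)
Your overall strategy is the paper's (semistable reduction, then contraction via Propositions \ref{prop:tailI}--\ref{prop:contractionII}, then comparison of exceptional loci on a common regular model for uniqueness), but there is a genuine gap at exactly the step you flag as hardest: realising the balanced shapes by ``blow-ups of the regular total space at closed points of $\mathcal C_0$''. Blowing up a smooth point of the central fibre only adds a rational tail, and blowing up a node makes the fibre non-reduced (the exceptional curve appears with multiplicity two in the total transform); neither operation can lengthen the rational chains separating the special branches from the core while keeping the family prestable with regular total space. The only way to produce chains of prescribed length is a further \emph{ramified base change}, which creates $A_k$-singularities of the total space at the nodes and inserts chains of $(-2)$-curves upon minimal resolution. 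The paper's proof hinges on performing a base change of degree exactly $3$ (if $p_1$ cleaves to a Weierstrass point) or $2$ (otherwise, after marking the conjugate section $\bar p_1$) \emph{before} running the expanding-circle algorithm: this calibration guarantees that the inner valence of each circle equals the outer valence of the previous one, that chain lengths stay $\equiv 2 \pmod 3$ (resp.\ odd), and that the ``critical steps'' --- where two circles around genus-one subcurves merge into a single genus-two contraction, or a circle around an elliptic bridge touches itself along the necklace --- advance by exactly one meaningful step. Without this input your algorithm has no reason ever to land on a configuration satisfying the shape prescriptions, and the proposal supplies no substitute mechanism. (Relatedly, your ordering ``genus two before genus one'' inverts the paper's: the algorithm contracts genus-one subcurves of low level first and only switches to a genus-two contraction at the critical step, which is what makes the level bookkeeping close up.)

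The uniqueness half is also too quick. Asserting that ``the core, the special branch selected by $q_1$, and hence the type coincide for both'' skips the points the paper spends most of its effort on: (i) one must first rule out that the two models contract subcurves of different genera --- the paper's Claim 1 uses the level conditions to show that if one model has only singularities of genus $\le i$ then so does the other; (ii) the dangling $I\!I_{m+1}$ singularity has an \emph{unmarked} special branch that is a $(-1)$-curve of the common resolution not contracted by $\phi$, so the exceptional loci are not a priori equal and one must argue (Claim 2) that the other model is forced into the same dangling configuration; (iii) a type $I$ and a type $I\!I$ contraction can both be combinatorially available (the two special trees sharing a trunk attached to a Weierstrass point of the core), and it is the interplay of conditions \eqref{cond:sing} and \eqref{cond:lev2} of Definition \ref{def:m-stability}, not intrinsic data of $C_\eta$, that excludes one of them --- especially since, as Remark \ref{rmk:Wandconj} warns, the Weierstrass condition on a non-tree-like core depends on the smoothing. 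Finally, once $\Exc(\phi)=\Exc(\phi^\prime)$ is established, the extension of the generic isomorphism is obtained from a rigidity lemma rather than by matching the semiample bundles power by power.
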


\begin{proof}[Existence of limits]

By properness of the moduli space of pointed admissible covers, after a finite base-change $\dvr^\prime\to\dvr$ we can complete $C_\eta$ to a prestable curve $\mathcal C^\prime\to\dvr^\prime$ together with an admissible cover $\mathcal C^\prime\to\mathcal T^\prime$ over $\dvr^\prime$. We drop the primes from the notation. Let $\psi\colon \tropC\to \tropT$ be the tropicalisation of the admissible cover, as in Section \ref{sec:recaplog}.

If there are two disjoint subcurves of arithmetic genus one $E_1$ and $E_2$ in $\mathcal C_0$, either they already satisfy condition \eqref{cond:lev1} of $m$-stability, or we proceed as in \cite{SMY1,RSPW1}: we draw circles around them and let the radius increase. If we can make the outer valence $l_+$ of the circle be at least $m+2$, while the inner valence $l_-$ is at most $m+1$, then we can contract the strict interior of the disc by \cite[Lemma 2.13]{SMY1} or \cite{Bozlee}, and we will get an elliptic $l_-$-fold point of level $l_+$. In general, when the circle passes through a rational vertex, Deligne-Mumford (semi)stability of $\mathcal C_0$ ensures that the inner valence stays the same, while the outer valence can only increase. Note that if $p_1$ cleaves to only one of the two elliptic subcurves, say $E_1$, then we should start by inflating the disc around $E_2$, since if this gets to touch $E_1$, the latter is not required to satisfy any level condition in the contraction (by the second clause of \eqref{cond:lev1}). Similarly, if $p_1$ cleaves to a rational component $R$ on the bridge between $E_1$ and $E_2$, if the circles meet at the vertex corresponding to $R$, contracting the strict interior produces two elliptic singularities with a common branch $R$. No level condition is then required of them individually, but the genus two core must still satisfy condition \eqref{cond:lev2} of $m$-stability. If it does not, we contract it to a genus two singularity as follows.

\begin{rem}
 If the two minimal elliptic subcurves were circles of $\PP^1$'s sharing a branch $R$ to which $p_1$ clove, the level condition \eqref{cond:lev1} would not apply, and we would proceed directly as follows.
\end{rem}

We are assuming now that the genus two core does not satisfy the level condition \eqref{cond:lev2}. By condition \eqref{cond:p1} of $m$-stability, if there is a genus two singularity in the contraction, then $p_1$ must cleave to the special branch. This determines the shape of the exceptional locus according to Proposition \ref{prop:tailI}. Indeed, the position of $p_1$ determines a piecewise-linear function $\lambda\colon\tropC\to\mathbb R$ (that we think of as a height function, compare with the level graphs of \cite{BCGGM}).

We actually construct a piecewise-linear function $\lambda_T$ on $\tropT$; $\lambda$ is its pullback along $\psi$. We call the \emph{core} of $\tropT$ the image of the core of $\tropC$. Up to a global translation by $\mathbb R$, the function $\lambda_T$ is characterised by having:
\begin{itemize}
 \item slope $2$ or $\frac{3}{2}$ towards the core on the edges separating $p_1$ from the core, according to whether they are ``conjugate'' or ``Weierstrass'' (i.e. whether the admissible cover is ramified or not over them), and slope $2$ along the infinite leg corresponding to $p_1$ (by stability of pointed admissible covers, we are assuming that $p_1$ itself is not a Weierstrass point);
 \item slope $1$ or $\frac{1}{2}$ towards the core on every other edge and infinite leg outside the core, according to whether they are ``conjugate'' or ``Weierstrass'';
 \item slope on the core determined by balancing \eqref{eqn:Dv} as in Figure \ref{fig:admredcores}.
\end{itemize}
(We may fix the value of $\lambda_T$ by saying its maximum is $0$, although this choice is both arbitrary and irrelevant.) 

 The subcurve to be contracted is of the form $\lambda^{-1}(\mathbb R_{>\rho})$, where $\rho$ is the value attained by $\lambda$ on a vertex of $\tropC$, such that there are $\leq m$ edges leaving $\lambda^{-1}(\{\rho\})$ in the upward direction, and $\geq m+1$ leaving it downwards. Such a vertex can be found because there are $n\geq m+1$ vertices at height $-\infty$ (corresponding to the infinite legs), and less than $m+1$ just below the core (since we assumed that the core does not satisfy the level condition \eqref{cond:lev2}). Cutting $\lambda$ off at level $\rho$, i.e. setting $\mu:=\max\{\lambda-\rho,0\}$, and subdividing $\tropC$ according to the domain of linearity of $\mu$, provides a partial destabilisation $\widetilde{\mathcal C}\to\mathcal C$, and a honestly piecewise-linear function $\mu$ on $\ttropC$.
 The curve:
 
 \[\oC=\underline{\operatorname{Proj}}_\dvr\left(\tilde{\pi}_*\bigoplus_{d\geq 0}\omega_{\tC/\dvr}(p_1+\ldots+p_n)(\mu)^{\otimes d}\right)\]
 contains a genus two singularity with less than $m$ branches and more than $m+1$ special points in the central fibre; it is endowed with a birational contraction $\phi\colon\tC\to\oC$, see Proposition \ref{prop:contractionI}. Upon contracting any rational tail away from the singularity, $\oC_0$ is the $m$-stable limit of $C_\eta$.

\end{proof}

\begin{proof}[Uniqueness of limits]

 Suppose that $\cC\to\dvr$ and $\cC^\prime\to\dvr^\prime$ are $m$-stable limits of $C_\eta$. Up to a further base-change (and a slight abuse of notation), there is a diagram:
 \bcd
 & \mathcal C^{ss}\ar[ld,"\phi" above]\ar[dr,"\phi^\prime" above] & \\
 \mathcal C\ar[dr] & & \mathcal C^\prime\ar[dl] \\
 & \dvr &
 \ecd
 extending the isomorphism between the generic fibres, where $\mathcal C^{ss}$ has semistable central fibre and regular total space, by the semistable reduction theorem. We may also assume that there is a hyperelliptic admissible cover $\mathcal C^{ss}\to\mathcal T$, and that there is a piecewise-linear function $\lambda^\prime$ on the tropicalisation $\tropC$ of $\mathcal C^{ss}$ such that $(\phi^\prime)^*\omega_{\mathcal C^\prime}=\omega_{\mathcal C}(\lambda^\prime)$, see Proposition \ref{prop:tailI}. Our goal is to show that the exceptional loci of $\phi$ and $\phi^\prime$ are the same, and conclude by \cite[Lemma 1.15]{Debarre}.
 
 Suppose that $\mathcal C_0$ contains an elliptic $l$-fold point $x$. Set $E_x=\phi^{-1}(x)$, which is a balanced connected subcurve of $\mathcal C^{ss}_0$, with arithmetic genus one and level $l\leq m+1$. If we are in the situation of Lemma \ref{lem:min2}, \ref{case:twice1} or \ref{case:2}, so $x$ admits a special branch $X$, then $p_1$ has to cleave to $X$ by \eqref{cond:p1} of $m$-stability, so in particular it does not cleave to $E_x$.  Since $\phi^\prime$ has connected fibres (hence it cannot restrict to a finite cover on any subcurve), either the image of $E_x$ is an arithmetic genus one subcurve of level $l$ as well, or it is contracted. But in the first case $p_1$ would have to cleave to $E_x$, which cannot be the case.
 
So, if $\phi^\prime$ does not contract $E_x$, we can assume that $x$ has $l$ distinct rational branches $R_1,\ldots,R_l$, such that $p_1$ cleaves to $R_1$, and $R_l$ is the beginning of a bridge $B_1=R_l,B_2,\ldots,B_h$ towards another (disjoint) genus one subcurve $E_y$. By \eqref{cond:lev1} of $m$-stability for $\mathcal C'_0$, the morphism $\phi^\prime$ has to contract $E_y$ and all of the $B_j$, so that $\phi^\prime(E_y)$ is an elliptic $l^\prime$-fold point of $\mathcal C'_0$ having $\phi^\prime(E_x)$ as a branch. But then $E_y$ and all the curves contained in a disc of radius $\on{dist}(E_x,E_y)$ around it have level bounded above by $l^\prime\leq m+1$, so $\phi$ has to contract them. But by assumption $\phi$ does not contract $R_l$, which is a contradiction.

We have concluded that $E_x\subseteq\on{Exc}(\phi^\prime)$.
 On the other hand, $\on{Exc}(\phi^\prime)$ cannot be any larger. Indeed, let us notice that by condition \eqref{cond:lev1} of $m$-stability applied to $\mathcal C$, the number of special points on $R_1,\ldots,R_l$ is $l'' > m+1$. The same is true for their preimages in $\mathcal C^{ss}$, call them $\widetilde R_1,\ldots,\widetilde R_l$. If $x^\prime=\phi^\prime(\phi^{-1}(x))$ were a genus one singularity of $\mathcal C^\prime_0$, then the component of $\on{Exc}(\phi^\prime)$ containing $E_x$ would be a strictly larger balanced subcurve of $\mathcal C^{ss}$, therefore it would include all the $\widetilde R_1,\ldots,\widetilde R_l$, and then $x^\prime$ would have at least $l''(>m+1)$ branches, which is not allowed by condition \eqref{cond:sing} of $m$-stability. So far, the argument is the same as in Smyth's paper.
 
 Suppose instead that $x^\prime$ were a genus two singularity. In this case, we would know by condition \eqref{cond:p1} of $m$-stability that $\lambda^\prime$ can be obtained by truncating the function described in the existential part of this proof. $\mathcal C^{ss}_0$ either contains another genus one subcurve $E_y$, or it contains a rational bridge between two points of $E_x$; call $Z$ this portion of the curve. If $p_1$ cleaves to $Z$, then $\lambda^\prime$ looks exactly like the distance from $E_x$ near $E_x$, so in particular $E_x\subsetneq {Exc}(\phi^\prime)$ implies that ${Exc}(\phi^\prime)$ contains $\widetilde R_1,\ldots,\widetilde R_l$. If instead $p_1$ cleaves to $E_x$, then $\lambda(Z)\geq\lambda(E_x)$, so the above conclusion is all the more implied.
 
 If $\mathcal C_0$ has two elliptic singularities $x$ and $y$ with a common branch $R$, either $p_1$ cleaves to only one of the two (say $x$), so $y$ has to satisfy the level condition \eqref{cond:lev1} (so we see as above that $E_y$ is a component of $\on{Exc}(\phi')$, and so is $E_x$); or $p_1$ cleaves to the common branch $R$, then we see as above that $E_x\sqcup E_y\subseteq \on{Exc}(\phi')$, but since the genus two core of $\mathcal C_0$ satisfies the level condition \eqref{cond:lev2} it is easy to see that no larger subcurve can be contracted.
 
 Finally, if $\mathcal C_0$ has a genus two singularity, $\on{Exc}(\phi)\subseteq \on{Exc}(\phi')$ by the usual level argument (condition \eqref{cond:lev2} for $\mathcal C'_0$). On the other hand, $p_1$ must cleave to the special component of  $\mathcal C_0$, so $\lambda$ is a cutoff of the function described in the existential part of this proof, and enlarging the exceptional locus (i.e. lowering the cutoff level) would produce a singularity with too many branches (by condition \eqref{cond:lev2} for $\mathcal C_0$ and against condition \eqref{cond:sing} for $\mathcal C'_0$). We conclude that $\on{Exc}(\phi)=\on{Exc}(\phi')$.
\end{proof}

\begin{exa}
 We illustrate the above proof by means of an example, see Figure \ref{fig:example_properness}. Suppose the central fibre has two elliptic subcurves separated by a rational bridge, each of them connected to a two-pointed rational tail. Suppose furthermore that the model has regular total space; thus, every finite edge of the tropicalization has length $1$. The rational tail supporting $p_1$ is attached to a point of $E_1$ that is $2$-torsion with respect to the other elliptic curve. The picture on the left displays the various cutoff levels $\rho$ depending on a choice of $m$. On the right, a cartoon picture of the corresponding $m$-stable limits.
 
  \begin{figure}[htb]
  \centering
  \begin{minipage}[l]{.45\textwidth}
   \centering
   \begin{tikzpicture}
   \coordinate (O) at (0,0);
   \coordinate (E1) at (-1,-1);
   \coordinate (E2) at (1,1);
   \coordinate (R1) at (-2,-4);
   \coordinate (RO) at (.5,-.5);
   \coordinate (R2) at (1.5,.5);
   \draw (E1)node[left]{\tiny$E_1$} -- (O) (E1)--(R1) (O)--(RO) (O)--(E2) (E2)node[left]{\tiny$E_2$}--(R2) (R1)--(-2.1,-5)node[below]{\tiny$p_1$} (R1)--(-1.9,-5) (RO)--(.3,-5) (RO)--(.7,-5) (R2)-- (1.3,-5) (R2)--(1.7,-5); 
   
   \foreach \x in {O,RO,R1,R2}
   \fill[black] (\x) circle (2pt);
   
   \foreach \x in {E1,E2}
   \draw[fill=white] (\x) circle(2pt);
   
   \draw[dashed,gray] (-2.1,.5) -- (1.6,.5) node[right]{\tiny$m=1$};
   \draw[dashed,gray] (-2.1,0) -- (1.6,0) node[right]{\tiny$m=2$};
   \draw[dashed,gray] (-2.1,-.5) -- (1.6,-.5) node[right]{\tiny$m=3$};
   \draw[dashed,gray] (-2.1,-1) -- (1.6,-1) node[right]{\tiny$m=4$};
   \draw[dashed,gray] (-2.1,-4) -- (1.6,-4) node[right]{\tiny$m=5$};
   \end{tikzpicture}
   \end{minipage}
   \begin{minipage}[r]{.45\textwidth}
    \includegraphics[width=.9\textwidth]{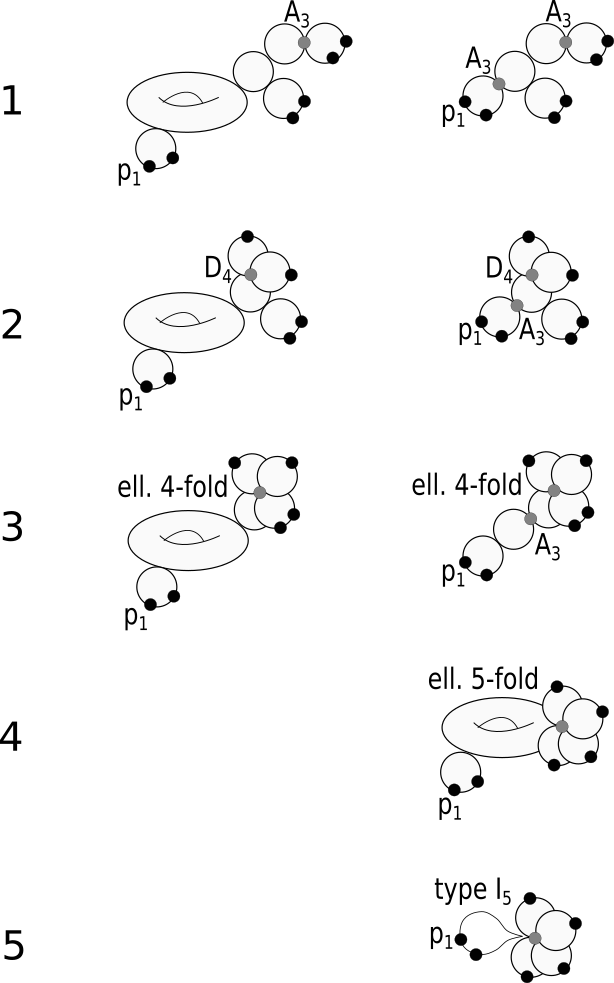}
   \end{minipage}
   \caption{Valuative criterion: an example with two genus one subcurves. Left: the graph of $\lambda$ with values of $\rho$ depending on $m$. Right: $m$, the first contraction $C^\prime$, the end result $\bar C$.}
   \label{fig:example_properness}
  \end{figure}

\end{exa}

\medskip

 \appendix
 
 \section{Crimping spaces}\label{sec:crimp}
 The crimping space parametrises singularities of a given type and pointed normalisation. Knowing it will help us analysing the birational map between two compactifications of $\pazocal{M}_{2,n}$.

 \noindent We recall some concepts from F. van der Wyck's thesis.
Working over $\k$, he considers the stacks:
\begin{itemize}[leftmargin=.5cm]
 \item $\mathcal S$ of reduced one-dimensional (1d) $\k$-algebras $R$,
 \item $\mathcal T$ of reduced 1d algebras with resolution $(R\hookrightarrow (S,J))$, where $S$ is a smooth one-dimensional $\k$-algebra, and $J$ the radical of the conductor of $R\subseteq S$.
\end{itemize}
  Basically, $R$ represents the (local) ring of a reduced curve with one singular point, $S$ is its normalisation, and $J$ is the ideal of the reduced fibre over the singular point of $\operatorname{Spec}(R)$. $\mathcal S$ and $\mathcal T$ are limit-preserving stacks over $\operatorname{Spec}(\k)$ \cite[Proposition 1.21]{vdW}. Furthermore, we may fix a reduced 1d algebra with resolution $\tau_0:(R_0\hookrightarrow(S_0,J_0))$, and consider the substack $\mathcal T(\tau_0)$ of reduced 1d algebras with singularity type $\tau_0$ (i.e. isomorphic to $\tau_0$ locally on both the base and the curve, see \cite[Definition 1.64]{vdW}; that various notions of ``locally'' coincide is proved in \cite[Proposition 1.50]{vdW}). There is a forgetful morphism $\mathcal T\to\mathcal S$, and the \emph{crimping space} of $\tau_0$ is defined to be the fibre over $R_0$ of the restriction of this morphism to $\mathcal T(\tau_0)$. The crimping space is a smooth $\k$-scheme \cite[Theorems 1.70 and 1.73]{vdW}; indeed, it is isomorphic to the quotient of $\Aut_{(S_0,J_0)/\k}$ by $\Aut_{(S_0,J_0)/R_0}$, the latter consisting of automorphisms of the normalisation that preserve the subalgebra of the singularity; moreover, by \cite[Theorem 1.53]{vdW} the quotient can be computed after modding out the lowest power of $J$ contained in $R$, denoted by $\Aut_{(S,J)}^{\mod J^k}$ respectively $\Aut_{(S,J)/R}^{\mod J^k}$. Crimping spaces can be thought of as moduli for the normalisation map.
  
\begin{lem}\label{lem:crimping}
 If $\operatorname{char}(\k)\neq2,3$, the crimping space of a genus two singularity of type $I$ (resp. $I\!I$) with $m$ branches is the disjoint union of $m$ (resp. ${m}\choose{2}$) copies of $\Aaff^1\times(\Aaff^1\setminus\{0\})^{m-1}$.
\end{lem}
\begin{proof}
We resume notation from the previous section. We are going to fix the subalgebra $\tau_0$ given in coordinates by \eqref{coordIII} and \eqref{coordII} respectively.

\textbf{Type $I$}: recall that in this case $\tm^4\subseteq R$. For a $\k$-algebra $A$, let
\[G_i(A)=\{t_i\mapsto g_{i1}t_i+g_{i2}t_i^2+g_{i3}t_i^3,t_j\mapsto t_j\ |\ g_{i1}\in A^\times,g_{i2},g_{i3}\in A\}.\]

Suppressing $i$ from the notation, with respect to the standard basis $\langle 1,t,t^2,t^3\rangle$ of $\k[t]/(t^4)$, the action of $(g)$ is represented by the following matrix:
\begin{equation*}
\begin{pmatrix}
1 & {} & {} & {} \\
{} & g_1 & {} & {} \\
{} & g_2  & g_1^2  & {}  \\
{} & g_3 & 2g_1g_2 & g_1^3
\end{pmatrix}
\end{equation*}
from which we see that $G$ is a semidirect product (split extension) of the multiplicative group $\Gm$ with a group $H$, which is a subgroup of the Heisenberg group and itself a non-split extension of two copies of the additive group:
\[1\to \Ga\to H\to \Ga\to 1\]

Now, for the pointed normalisation, the automorphism group is
\[\Aut_{(\tR,\tm)}^{\mod\tm^4}(A)= \mathfrak{S}_m \ltimes (G_1\times\ldots\times G_m)(A).\]

Consider now the action of a group element of the form $(\id_{\mathfrak{S}_m};g_1,\ldots,g_m)$ on the given generators of $R$:
\begin{align*}
 x_i\mapsto& \ldots\oplus g_{i1}t_i+g_{i2}t_i^2+g_{i3}t_i^3\oplus\ldots\oplus g_{m1}^3t_m^3,\quad\text{for } i=1,\ldots,m-1;\\
 x_m\mapsto& \ldots\oplus g_{m1}^2t_m^2+2g_{m1}g_{m2}t_m^3 \pmod{\tm^4}.
\end{align*}
The former belongs to $R$ iff $g_{i1}=g_{m1}^3$; the latter does iff $g_{m2}=0$. These elements span a subgroup isomorphic to $(H^{m-1}\times(\Gm\ltimes\Ga))(A)$. On the other hand, there is a special (singular) branch, parametrised by $t_m$. We conclude that
\[\Aut_{\tau_0}^{\mod\tm^4}(A)=\mathfrak{S}_{m-1}\ltimes(H^{m-1}\times(\Gm\ltimes\Ga))(A).\]
The quotient is therefore isomorphic to $m$ copies of $\Aaff^1\times(\Aaff^1\setminus\{0\})^{m-1}$. 

\textbf{Type $I\!I$}: recall that in this case $\tm^3\subseteq R$. For a $\k$-algebra $A$, let
\[G_i(A)=\{t_i\mapsto g_{i1}t_i+g_{i2}t_i^2,t_j\mapsto t_j\ |\ g_{i1}\in A^\times,g_{i2}\in A\},\]
 so $G_i= \Gm\ltimes\Ga$, and notice that
\[\Aut_{(\tR,\tm)}^{\mod\tm^3}(A)= \mathfrak{S}_m\ltimes(G_1\times\ldots\times G_m)(A).\]

Consider now the action of a group element of the form $(\id_{\mathfrak{S}_m};g_1,\ldots,g_m)$ on the given generators of $R$:
\begin{align*}
 x_i\mapsto& \ldots\oplus g_{i1}t_i+g_{i2}t_i^2\oplus\ldots\oplus g_{m1}^2t_m^2,\quad\text{for } i=2,\ldots,m-1;\\
 x_1\mapsto& g_{11}t_1+g_{12}t_1^2\oplus\ldots\oplus g_{m1}t_m+g_{m2}t_m^2 \pmod{\tm^3}.
\end{align*}
The former belongs to $R$ iff $g_{i1}=g_{m1}^2$; the latter does iff $g_{11}=g_{m1}$ and $g_{12}=g_{m2}$. These elements span a subgroup isomorphic to $\Gm\ltimes\Ga^{m-1}(A)$. On the other hand, all branches are smooth (therefore, isomorphic to each other), but two of them (parametrised by $t_1$ and $t_m$ respectively) are tangent, thus forming a distinguished pair. We conclude that
\[\Aut_{\tau_0}^{\mod\tm^3}(A)=(\mathfrak{S}_2\times\mathfrak{S}_{m-2})\ltimes(\Gm\ltimes\Ga^{m-1})(A).\]
The quotient is then isomorphic to $\binom{m}{2}$ copies of $\Aaff^1\times(\Aaff^1\setminus\{0\})^{m-1}$.
\end{proof}

\begin{rem}\label{rem:characteristic}
 The restrictions on the characteristic of the base field in Lemmas \ref{lem:aut} and \ref{lem:crimping} rule out the sporadic occurrence of infinite families of automorphisms, and its effect on the crimping spaces. For example, when $\operatorname{char}(\k)=2$, the singularities $\k[\![t^2,t^5]\!]$ and $\k[\![t^2+t^3,t^4,t^5]\!]$ are not isomorphic, the group of infinitesimal automorphisms has positive dimension, and the crimping space consists of an isolated point \cite[Examples 1.79-80]{vdW}.
\end{rem}

Once the special branch(es) has been fixed, we can identify the crimping space of the type $I$ (resp. $I\!I$) singularity with the parameters $(\gamma_{i,m})_{i=1,\ldots,m}\in(\k^\times)^{m-1}\times\k$ (resp. $(\alpha_{1,m},\beta_{i,m})_{i=1,\ldots,m}\in \k^\times\times\k\times(\k^\times)^{m-1}$) appearing in the expression \eqref{coordIII-cs} (resp. \eqref{coordII-cs}) for the generators of the singularity subalgebra.


There is a more geometric way to realise the crimping spaces. It is well-known that an ordinary cusp of genus one can be obtained by collapsing (\emph{push-out}) any non-zero tangent vector at $p\in\Aaff^1$. More generally, a Gorenstein singularity of genus one and $m$ branches can be obtained by collapsing a generic (not contained in any coordinate linear subspace) tangent line at an ordinary $m$-fold point (a non-Gorenstein singularity of genus zero) \cite[Lemma 2.2]{SMY1}. Therefore, the crimping space of the elliptic $m$-fold point, which is isomorphic to $(\Aaff^1\setminus\{0\})^{m-1}$, can be realised as the complement of the coordinate hyperplanes inside $\PP(T_pR_m)\simeq\PP^{m-1}$, where $(R_m,p)$ is the rational $m$-fold point. Besides, this gives rise to a natural compactification of the crimping space supporting a universal family of curves - in fact, two: either we collapse non-generic tangent vectors, obtaining non-Gorenstein singularities along the boundary (this family $\mathcal C$ admits a common (semi)normalisation by the trivial family $\widetilde{\mathcal C}=R_m\times \PP(T_pR_m)$); or we blow $\widetilde{\mathcal C}$ up along the boundary (\emph{sprouting}), so that the non-Gorenstein singularities are replaced by elliptic $m$-fold points having strictly semistable branches \cite[\S 2.2-3]{SMY2}.

Similarly, a Gorenstein singularity of genus two can be obtained by collapsing a generic line in the tangent space of a non-Gorenstein singularity of genus one. Indeed, $\tau_0^{I}$ admits a partial normalisation by $\sigma_0^{I}$, which is the decomposable union of a cusp (parametrised by $t_m$) together with $m-1$ axes; the local ring of $\sigma_0^{I}$ is obtained from that of $\tau_0^{I}$ by adjoining the generator $t_m^3$.  $\tau_0^{I\!I}$ admits a partial normalisation by $\sigma_0^{I\!I}$, which is the decomposable union of a tacnode in the $(t_1,t_m)$-plane together with $m-2$ axes, adjoining the generator $t_m^2$.

These fit together nicely in a unifying picture: if we restrict $\mathcal C$ from the previous paragraph to the union of the coordinate lines in $\PP(T_pR_m)$, we obtain $m$ copies of $\sigma_0^{I}$ over the coordinate points, together with $\binom{m}{2}$ copies of the universal curve of type $\sigma_0^{I\!I}$ over its crimping space - which is isomorphic to $\Aaff^1\setminus\{0\}$ - identified with the line minus two points. Let $P=\PP(T_{\mathcal C/\PP,p|\cup\text{lines}})$ be the projectivised tangent space of the fibre at the singular point. For each of the $\binom{m}{2}$ coordinate lines, $P$ has one component $P^{I\!I}_i$ that is a $\PP^{m-1}$-bundle over the line; besides, $P$ has $m$ components $P^{I}_j$ isomorphic to $\PP^m$ and supported over the points. The crimping space of the genus two singularities with $m$ branches (of type $I$ and $I\!I$ together) can be realised as an open subscheme of $P$, obtained by removing from the $\PP^{m-1}$-fibres of $P^{I\!I}$ the $m-1$ hyperplanes generated by (a) the tangent cone of the tacnode and the $m-2$ axes, and (b) the plane containing the tacnode and all but one of the $m-2$ axes; and from each $P^{I}_j$ the $m$ planes generated by (a) the tangent cone of the cusp and the $m-1$ axes, and (b) the plane containing the cusp and all but one of the $m-1$ axes.

\smallskip

Finally, we want to describe another point of view on the dichotomy between the atom and the non-atom (see Definition \ref{def:atom}). We once again recall some relevant concepts from van der Wyck's thesis. The notion of \emph{type} of a (proper, reduced) pointed curve \cite[Definition 1.87]{vdW} is a generalisation of the dual graph of a nodal curve, where any kind of reduced curve singularity is allowed, an incidence relation records the branches meeting in a given singular point, another map tells us which branches belong to the same irreducible component, and the genus of the latter. Let $\mathcal N_T$ parametrise curves of type $T$ together with a resolution (a finite birational morphism from a smooth pointed curve, where the preimage of the singularities is marked as well; see \cite[Definitions 1.95 and 1.100]{vdW} for more details). Then $\mathcal N_T$ admits a map to the stack of all curves (forgetting the resolution), and a map to the stack of (not necessarily connected) smooth pointed curves of the associated type $\mathcal M_T$ (forgetting the singular curve); the latter is a product of stacks of the form $\pazocal M_{g,n}$ modulo the (finite) automorphism group of the type $T$. Van der Wyck proves that $\mathcal N_T\to\mathcal M_T$ is a locally trivial fibration in the \'etale topology, the fibre of which is nothing but the product of the crimping spaces of all the singularities appearing in $T$; therefore $\mathcal N_T$ is an algebraic stack as well \cite[Theorem 1.105 and Corollary 1.106]{vdW}.

In case $T$ consists of a unique Gorenstein singularity of genus two, with $m$ one-marked rational branches, it is not hard to see that the stack $\mathcal N_T$ is isomorphic to $[\Aaff^1/\Gm]$ (see \cite[Examples 1.111-112]{vdW} for the $I_1$ and $I\!I_2$ cases), so it has two points: one with $\Gm$, and the other one with trivial stabiliser, corresponding to the atom and non-atom respectively.

Again, there is a more geometric way to realise the dichotomy. The non-Gorenstein genus one singularity of type $\sigma_0^{I\!I}$ (resp. $\sigma_0^{I}$), with one-marked rational branches, has automorphism group $\Gm^{m-1}$ (resp. $\Gm^m$). This acts on the tangent space at the singular point: of the lines fixed by this action, only one (call it $\ell^\prime$) sits inside the open subset corresponding to the crimping space; all other lines in the crimping space are identified under the group action (call $\ell$ their equivalence class), i.e. the action of the automorphism group on the crimping space has two orbits, $\ell^\prime$ with stabiliser $\Gm$, and $\ell$ with trivial stabiliser. Collapsing $\ell$ yields the non-atom, while collapsing $\ell^\prime$ yields the atom.


\bibliographystyle{alpha}
\bibliography{genus_two} 

\end{document}